\let\csname ver@amsthm.sty\endcsname\relax
\numberwithin{equation}{section}
\newtheorem{thm}{Theorem}[section]
\newtheorem{lemma}[thm]{Lemma}
\newtheorem{cor}[thm]{Corollary}
\newtheorem{prop}[thm]{Proposition}
\newtheorem{conj}[thm]{Conjecture}
\newtheorem{Definition}[thm]{Definition}
\newenvironment{definition}
  {\begin{Definition}\rm}{\end{Definition}}
\newtheorem{Example}[thm]{Example}
\newenvironment{example}
  {\begin{Example}\rm}{\end{Example}}
\newtheorem{Remark}[thm]{Remark}
  \newtheorem{construction}{Construction}
\crefname{thm}{Theorem}{Theorems}
\crefname{lemma}{Lemma}{Lemmas}
\crefname{cor}{Corollary}{Corollaries}
\crefname{prop}{Proposition}{Propositions}
\crefname{conj}{Conjecture}{Conjectures}
\crefname{question}{Question}{Questions}
\crefname{definition}{Definition}{Definitions}
\crefname{example}{Example}{Examples}
\crefname{remark}{Remark}{Remarks}
\crefname{construction}{Construction}{Constructions}
\newcommand{\emailhref}[1]{\email{\href{#1}{#1}}}
\newcommand{\dfn}[1]{\textcolor{blue}{\emph{#1}}}
\DeclareMathOperator{\pro}{{Pro}} 
\DeclareMathOperator{\evac}{{Evac}} 
\DeclareMathOperator{\rot}{{Rot}} 
\DeclareMathOperator{\rc}{{RevComp}} 
\DeclareMathOperator{\flip}{{Flip}} 
\newcommand{\Sym}{\mathfrak{S}} 
\newcommand{\M}{\mathcal{M}} 
\newcommand{\N}{\mathcal{N}} 
\newcommand{\D}{\mathcal{D}} 
\newcommand{\W}{\mathcal{W}} 
\newcommand{\A}{\mathrm{A}} 
\newcommand{\B}{\mathrm{B}} 
\newcommand{\C}{\mathrm{C}} 
\newcommand*\circled[2][0.9]{\tikz[baseline=(char.base)]{
    \node[shape=circle, draw, inner sep=0pt,
        minimum height={\f@size*#1},] (char) {$\vphantom{WAH1g}#2$};}}
\title{Promotion of Kreweras words}
\author{Sam Hopkins}
\address{Department of Mathematics, Howard University, Washington, DC, USA}
\thanks{Sam Hopkins was supported by NSF grant \#1802920}
\author{Martin Rubey}
\address{Fakult\"{a}t f\"{u}r Mathematik und Geoinformation, TU Wien, Austria}
\thanks{Martin Rubey was supported by the the Austrian Science Fund (FWF): P 29275}
\keywords{Kreweras words/walks, promotion, evacuation, webs, plabic graphs}
\begin{document}

\begin{abstract}
Kreweras words are words consisting of $n$ $\A$'s, $n$ $\B$'s, and $n$ $\C$'s in which every prefix has at least as many $\A$'s as $\B$'s and at least as many $\A$'s as~$\C$'s. Equivalently, a Kreweras word is a linear extension of the poset ${\sf V}\times [n]$. Kreweras words were introduced in 1965 by Kreweras, who gave a remarkable product formula for their enumeration. Subsequently they became a fundamental example in the theory of lattice walks in the quarter plane. We study Sch\"{u}tzenberger's promotion operator on the set of Kreweras words. In particular, we show that $3n$ applications of promotion on a Kreweras word merely swaps the $\B$'s and $\C$'s. Doing so, we provide the first answer to a question of Stanley from~2009, asking for posets with `good' behavior under promotion, other than the four families of shapes classified by Haiman in~1992. We also uncover a strikingly simple description of Kreweras words in terms of Kuperberg's $\mathfrak{sl}_3$-webs, and Postnikov's trip permutation associated with any plabic graph. In this description, Sch\"{u}tzenberger's promotion corresponds to rotation of the web.
\end{abstract}

\maketitle

\section{Introduction} \label{sec:intro}

The famous ballot problem, whose history stretches back to the 19th century, asks in how many ways we can order the ballots of an election between two candidates~Alice and Bob, who each receive $n$ votes, so that during the counting of ballots Alice never trails Bob. These ballot orderings correspond to words of length $2n$ in the letters~$\A$ and~$\B$, with as many~$\A$'s as~$\B$'s, for which every prefix has at least as many~$\A$'s as~$\B$'s. Such words are called \dfn{Dyck words}, and they are counted by the ubiquitous \dfn{Catalan numbers}
\[C_n  \coloneqq  \frac{1}{n+1}\binom{2n}{n}.\]

In 1965, Kreweras~\cite{kreweras1965classe} considered the following version of a $3$-candidate ballot problem: in how many ways can we order the ballots of an election between three candidates~Alice, Bob, and Charlie, who each receive $n$ votes, so that during the counting Alice never trails Bob and Alice never trails Charlie -- although the relative position of Bob and Charlie may change during the counting?
These ballot orderings correspond to words of length~$3n$ in the letters~$\A$, $\B$, and~$\C$, with equally many~$\A$'s, $\B$'s, and $\C$'s, for which every prefix has at least as many~$\A$'s as~$\B$'s and also at least as many~$\A$'s as~$\C$'s.
We call such words \dfn{Kreweras words}. Kreweras proved that they are counted by the formula
\[ K_n  \coloneqq  \frac{4^n}{(n+1)(2n+1)}\binom{3n}{n}.\]

For many years Kreweras's formula seemed like an isolated enumerative curiosity, although simplified proofs were presented by Niederhausen~\cite{niederhausen1980sheffer, niederhausen1983ballot} and Kreweras--Niederhausen~\cite{kreweras1981solution} in the 1980s.
Gessel~\cite{gessel1986probabilistic} gave yet another proof which demonstrated that the generating function $\sum_{n=0}^{\infty}K_n \, x^n$ for this sequence of numbers is algebraic.
Interest in Kreweras's result was revived decades later in the context of lattice walk enumeration. Kreweras words evidently correspond to walks in $\mathbb{Z}^2$ with steps of the form $\A=(1,1)$, $\B=(-1,0)$, and $\C=(0,-1)$ from the origin to itself which always remain in the nonnegative orthant.
Such walks are called \dfn{Kreweras walks}.
Bousquet-M\'{e}lou~\cite{bousquetmelou2005walks} gave another proof of Kreweras's product formula counting Kreweras walks using the kernel method from analytic combinatorics.
Indeed, the Kreweras walks are nowadays a fundamental example in the study of ``walks with small step sizes in the quarter plane,'' a program successfully carried out over a number of years in the 2000s by Bousquet-M\'{e}lou and others (see, e.g.,~\cite{bousquetmelou2010walks}).
Finally, we note that Bernardi~\cite{bernardi2007bijective} gave a purely combinatorial proof of the product formula for the number of Kreweras walks via a bijection with (decorated) cubic maps.

In this paper we study a cyclic group action on Kreweras words.

Let $w=(w_1,w_2,\ldots,w_{3n})$ be a Kreweras word of length $3n$. The \dfn{promotion} of~$w$, denoted $\pro(w)$, is obtained from $w$ as follows. Let $\iota(w)$ be the smallest index $\iota \geq 1$ for which the prefix $(w_1,w_2,\ldots,w_{\iota})$ has either the same number of $\A$'s as $\B$'s or the same number of $\A$'s as $\C$'s. Then
\[ \pro(w)  \coloneqq  (w_2,w_3,\ldots,w_{\iota(w)-1}, \A, w_{\iota(w)+1},  w_{\iota(w)+2}, \ldots, w_{3n}, w_{\iota(w)}).\]
It is easy to verify that $\pro(w)$ is also a Kreweras word, and that promotion is an invertible action on the set of Kreweras words.

\begin{example} \label{ex:kword_pro}
Let $w = \A \A \B \circled{\B} \C \A \C \C \B$. Here we circled the letter $w_{\iota(w)}$, and hence $\pro(w) = \A \B \A \C \A \C \C \B \B$. We can further compute that the first several iterates of promotion applied to $w$ are
\begin{align*}
\pro(w) &= \A \circled{\B} \A \C \A \C \C \B \B \\
\pro^2(w) &= \A \A \C \A \C \circled{\C} \B \B \B \\
\pro^3(w) &= \A \circled{\C} \A \C \A \B \B \B \C \\
\pro^4(w) &= \A \A \C \A \B \B \circled{\B} \C \C \\
\pro^5(w) &= \A \circled{\C} \A \B \B \A \C \C \B \\
\pro^6(w) &= \A \A \B \circled{\B} \A \C \C \B \C \\
\pro^7(w) &= \A \circled{\B} \A \A \C \C \B \C \B \\
\pro^8(w) &= \A \A \A \C \C \B \circled{\C} \B \B \\
\pro^9(w) &= \A \A \C \C \B \A \B \B \C
\end{align*}
Note that $\pro^9(w)$ is obtained from $w$ by swapping all $\B$'s for~$\C$'s and vice-versa.
\end{example}

Our first result predicts the order of promotion on Kreweras words:

\begin{thm} \label{thm:main}
Let $w$ be a Kreweras word of length $3n$. Then $\pro^{3n}(w)$ is obtained from~$w$ by swapping all $\B$'s for $\C$'s and vice-versa. In particular, $\pro^{6n}(w)=w$.
\end{thm}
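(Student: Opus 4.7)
My plan is to reduce the theorem to a manifestly cyclic statement about $\mathfrak{sl}_3$-webs, as the abstract hints. The definition of promotion on Kreweras words is indirect -- one must locate the critical index $\iota$, shift positions, and cycle a letter to the end -- and iterating it $3n$ times by directly tracking individual letters seems hopeless. The strategy is instead to encode each Kreweras word $w$ as an $\mathfrak{sl}_3$-web $W_w$ in a disk with $3n$ boundary points, arrange things so that promotion becomes a single-step rotation of the disk, and then exploit that a full rotation is visibly the identity on the underlying web.

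\textbf{Step 1 (web encoding):} I would first construct a bijection $w \mapsto W_w$ from Kreweras words of length $3n$ to some family of $\mathfrak{sl}_3$-webs in a disk with $3n$ boundary vertices. Crucially, the $\A$/$\B$/$\C$-label of each boundary vertex should be recovered intrinsically from the web -- for instance via Postnikov's trip permutation applied to an underlying plabic graph, as foreshadowed in the abstract -- rather than carried as extra data. \textbf{Step 2 (promotion equals rotation):} I would then prove the intertwining relation $W_{\pro(w)} = \rot(W_w)$, where $\rot$ rotates the disk by one boundary position. This is modelled on the theorem of Petersen--Pylyavskyy--Rhoades (and its precursors) saying that jeu-de-taquin promotion of rectangular SYT corresponds to rotation of the associated Kuperberg webs. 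The analogue for Kreweras words requires checking that the boundary-local operation performed by $\pro$ -- delete the leading $\A$, slide left up to position $\iota$, insert an $\A$, cycle $w_\iota$ to the end -- is reproduced by a single rotation step on $W_w$.

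With Steps 1 and 2 in hand, the theorem reduces to the geometric statement that $\rot^{3n}(W_w)$ equals $W_w$ as an unlabeled web but with every $\B$-labeled boundary vertex becoming a $\C$-labeled one and vice versa. The unlabeled web is tautologically preserved by a full rotation, so the content of the claim is how the intrinsic labeling rule transforms when we shift our basepoint around the disk by a full turn. If the labeling rule assigns $\A$ to boundary vertices playing a distinguished self-dual role under the trip permutation (such as fixed points of a certain involution) while pairing each $\B$-vertex with a $\C$-vertex, the desired symmetry is essentially forced; the supplementary statement $\pro^{6n} = \mathrm{id}$ then follows since the $\B\leftrightarrow\C$ swap is an involution. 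I expect the main obstacle to lie in Steps 1 and 2 -- identifying the correct web encoding and matching promotion to rotation on the nose -- while Step 3 should reduce to a short combinatorial check about how the boundary labeling behaves under a full rotation.
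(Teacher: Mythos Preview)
Your outline is essentially the paper's strategy, but you have mislocated the difficulty: Step~1 as you state it is impossible, and this is exactly why Step~3 is not the ``short combinatorial check'' you expect.

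The map $w\mapsto\W_w$ is not a bijection. The web (equivalently, its trip permutation $\sigma_w=\mathrm{trip}_{\W_w}$) does determine the $\A$-positions intrinsically --- they are precisely the $i$ with $\sigma_w^{-1}(i)>i$ --- but it does \emph{not} determine which of the remaining positions are $\B$'s and which are $\C$'s. In fact $2^{\kappa(\W_w)}$ distinct Kreweras words share the same web, where $\kappa$ is the number of connected components. So there is no hope of recovering the $\B$/$\C$ labels ``intrinsically from the web via the trip permutation'' as your Step~1 requires.

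This is exactly what breaks Step~3. Granting $\W_{\pro(w)}=\rot(\W_w)$, a full rotation yields $\W_{\pro^{3n}(w)}=\W_w$, so $\pro^{3n}(w)$ and $w$ have the same $\A$-positions; but a priori $\pro^{3n}(w)$ could be any of the $2^{\kappa(\W_w)}$ words with that web, not just the global $\B\leftrightarrow\C$ swap. Your sentence ``the desired symmetry is essentially forced'' is where the argument stalls. The paper closes this gap by carrying an auxiliary function $\varepsilon_w\colon[3n]\to\{\B,\C\}$ alongside $\sigma_w$, so that the pair $(\sigma_w,\varepsilon_w)$ genuinely determines $w$, and then proving the refined key lemma
\[
\sigma_{\pro(w)}=\rot(\sigma_w),\qquad
\varepsilon_{\pro(w)}=[\varepsilon_w(2),\ldots,\varepsilon_w(3n),\,-\varepsilon_w(1)].
\]
The crucial point is that a single promotion rotates $\varepsilon_w$ and negates \emph{exactly one} entry, the one that wraps around. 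After $3n$ promotions every entry has been negated exactly once, giving $\varepsilon_{\pro^{3n}(w)}=-\varepsilon_w$ and hence the global swap. This ``one sign flip per step'' mechanism is the missing idea in your plan.
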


Promotion of Kreweras words comes from the theory of partially ordered sets. In a series of papers from the 60s and 70s, Sch\"{u}tzenberger~\cite{schutzenberger1963quelques, schutzenberger1972promotion, schutzenberger1973evacuations} introduced and developed the theory of a cyclic action called \dfn{promotion}, as well as a closely related involutive action called \dfn{evacuation}, on the \dfn{linear extensions} of any poset. Let $V(n)$ denote the Cartesian product of the $3$-element ``V''-shaped poset~$\begin{tikzpicture}[scale=0.3] \node[shape=circle,fill=black,inner sep=1] (B) at (-1,0) {}; \node[shape=circle,fill=black,inner sep=1] (C) at (1,0) {}; \node[shape=circle,fill=black,inner sep=1] (A) at (0,-1) {}; \draw (B)--(A); \draw (C)--(A); \end{tikzpicture}$ and the $n$-element chain $[n]$. Then, as observed by Kreweras--Niederhausen~\cite{kreweras1981solution}, the linear extensions of $V(n)$ are in obvious bijection with the Kreweras words of length~$3n$. And promotion of Kreweras words as described above is the same as Sch\"{u}tzenberger's promotion on the linear extensions of $V(n)$.

Previously there were only four known (non-trivial) families of posets for which the order of promotion can be predicted; see \cref{fig:haiman_posets}. These were classified by Haiman in the 1990s~\cite{haiman1992dual, haiman1992characterization}. In a survey on promotion and evacuation, Stanley~\cite[\S4, Question~3]{stanley2009promotion} asked whether there were any other families of posets for which the order of promotion is given by a simple formula. Our work shows that $V(n)$ is such an example.

\begin{figure}
\begin{center}
\begin{tikzpicture}
\node (A) at (0,0) {\begin{tabular}{c c c c c c c c}$\A$ & $\A$ & $\B$ & $\A$ & $\B$ & $\circled{\B}$ & $\A$ & $\B$ \\ 1 & 2 & 3 & 4 & 5 & 6 & 7 & 8\end{tabular}};
\node (B) at (0,-5) {\begin{tabular}{c c c c c c c c}$\A$ & $\B$ & $\A$ & $\B$ & $\A$ & $\A$ & $\B$ & $\B$ \\ 1 & 2 & 3 & 4 & 5 & 6 & 7 & 8\end{tabular}};
\node at (-1,-2.5) {Promotion};
\draw [thick,->] (A)--(B);
\node (C) at (6,0) {\begin{tikzpicture}[scale=1.25]
\node (1) at (-0.3,0) {1};
\node (2) at (-0.75,-0.45) {2};
\node (3) at (-0.75,-1.05) {3};
\node (4) at (-0.3,-1.5) {4};
\node (5) at (0.3,-1.5) {5};
\node (6) at (0.75,-1.05) {6};
\node (7) at (0.75,-0.45) {7};
\node (8) at (0.3,0) {8};
\draw[ultra thick] (2) to [out=180-200, in=180-160] (3);
\draw[ultra thick] (4) to [out=180-110, in=180-70]  (5);
\draw[ultra thick] (1) to [out=180-250,in=180-20] (6);
\draw[ultra thick] (7) to [out=180-0, in=180-270] (8);
\end{tikzpicture}};
\node (D) at (6,-5) {\begin{tikzpicture}[scale=1.25]
\node (1) at (-0.3,0) {1};
\node (2) at (-0.75,-0.45) {2};
\node (3) at (-0.75,-1.05) {3};
\node (4) at (-0.3,-1.5) {4};
\node (5) at (0.3,-1.5) {5};
\node (6) at (0.75,-1.05) {6};
\node (7) at (0.75,-0.45) {7};
\node (8) at (0.3,0) {8};
\draw[ultra thick] (1) to [out=180-270,in=180-180] (2);
\draw[ultra thick] (3) to [out=180-180,in=180-90] (4);
\draw[ultra thick] (5) to [out=180-70,in=180--70] (8);
\draw[ultra thick] (6) to [out=180-20, in=180--20] (7);
\end{tikzpicture}};
\node at (7,-2.5) {Rotation};
\draw[thick,->] (C) -- (D);
\draw[thick,<->] (A) -- (C);
\draw[thick,<->] (B) -- (D);
\end{tikzpicture}
\end{center}
\caption{Promotion of Dyck words as rotation of noncrossing matchings.} \label{fig:dyck_noncrossing}
\end{figure}
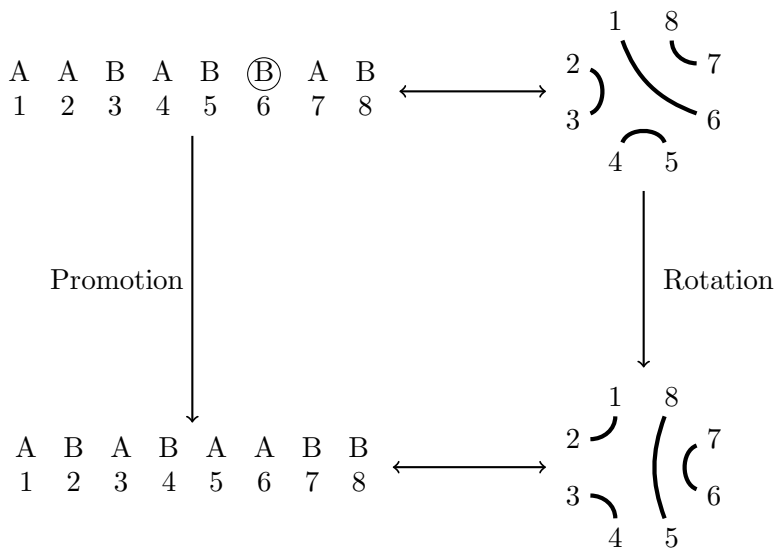

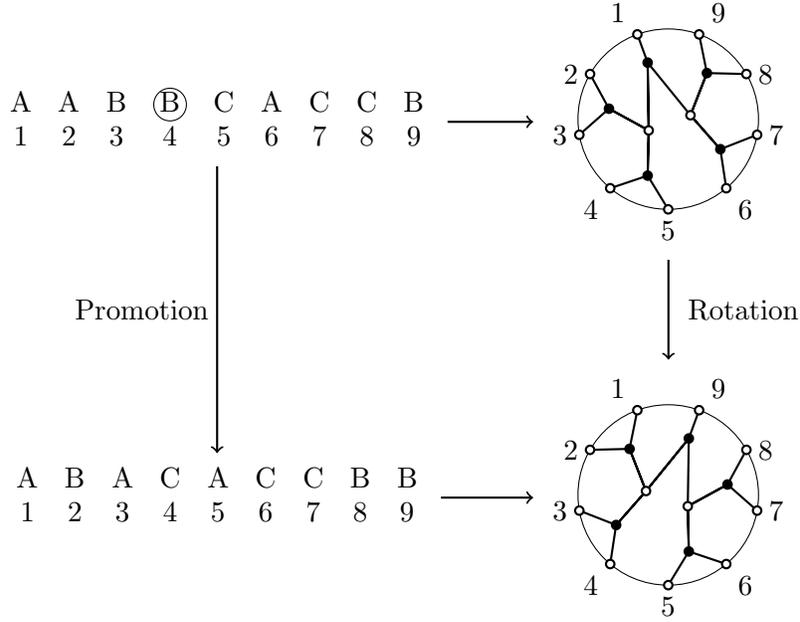
\begin{figure}
\begin{tikzpicture}
\node (Ax) at (0,0) {\begin{tabular}{c c c c c c c c c}$\A$ & $\A$ & $\B$ & $\circled{\B}$ & $\C$ & $\A$ & $\C$ & $\C$ & $\B$ \\ 1 & 2 & 3 & 4 & 5 & 6 & 7 & 8 & 9\end{tabular}};
\node (Bx) at (0,-5) {\begin{tabular}{c c c c c c c c c}$\A$ & $\B$ & $\A$ & $\C$ & $\A$ & $\C$ & $\C$ & $\B$ & $\B$ \\ 1 & 2 & 3 & 4 & 5 & 6 & 7 & 8 & 9\end{tabular}};
\node at (-1,-2.5) {Promotion};
\draw [thick,->] (Ax)--(Bx);
\node (Cx) at (6,0) {\begin{tikzpicture}[scale=0.4]
\draw (0,0) circle (3);
\node[label={above left:1},inner sep=0] (1) at (110:3) {};
\node[label={left:2},inner sep=0] (2) at (150:3) {};
\node[label={left:3},inner sep=0] (3) at (190:3) {};
\node[label={below left:4},inner sep=0] (4) at (230:3) {};
\node[label={below:5},inner sep=0] (5) at (270:3) {};
\node[label={below right:6},inner sep=0] (6) at (310:3) {};
\node[label={right:7},inner sep=0] (7) at (350:3) {};
\node[label={right:8},inner sep=0] (8) at (390:3) {};
\node[label={above right:9},inner sep=0] (9) at (430:3) {};
\node[inner sep=0] (A) at (170:2) {};
\node[inner sep=0] (B) at (250:2) {};
\node[inner sep=0] (C) at (330:2) {};
\node[inner sep=0] (D) at (410:2) {};
\node[inner sep=0] (E) at (110:2) {};
\node[inner sep=0] (F) at (210:0.75) {};
\node[inner sep=0] (G) at (370:0.75) {};
\draw[thick] (1)--(E)--(F)--(E)--(G);
\draw[thick] (2)--(A)--(F)--(A)--(3);
\draw[thick] (4)--(B)--(F)--(B)--(5);
\draw[thick] (6)--(C)--(G)--(C)--(7);
\draw[thick] (8)--(D)--(G)--(D)--(9);
\draw[thick,draw=black,fill=white]  (1) circle (4pt);
\draw[thick,draw=black,fill=white]  (2) circle (4pt);
\draw[thick,draw=black,fill=white]  (3) circle (4pt);
\draw[thick,draw=black,fill=white]  (4) circle (4pt);
\draw[thick,draw=black,fill=white]  (5) circle (4pt);
\draw[thick,draw=black,fill=white]  (6) circle (4pt);
\draw[thick,draw=black,fill=white]  (7) circle (4pt);
\draw[thick,draw=black,fill=white]  (8) circle (4pt);
\draw[thick,draw=black,fill=white]  (9) circle (4pt);
\draw[thick,draw=black,fill=black]  (A) circle (4pt);
\draw[thick,draw=black,fill=black]  (B) circle (4pt);
\draw[thick,draw=black,fill=black]  (C) circle (4pt);
\draw[thick,draw=black,fill=black]  (D) circle (4pt);
\draw[thick,draw=black,fill=black]  (E) circle (4pt);
\draw[thick,draw=black,fill=white]  (F) circle (4pt);
\draw[thick,draw=black,fill=white]  (G) circle (4pt);
\end{tikzpicture}};
\node (Dx) at (6,-5) {\begin{tikzpicture}[scale=0.4]
\draw (0,0) circle (3);
\node[label={above right:9},inner sep=0] (1) at (70:3) {};
\node[label={above left:1},inner sep=0] (2) at (110:3) {};
\node[label={left:2},inner sep=0] (3) at (150:3) {};
\node[label={left:3},inner sep=0] (4) at (190:3) {};
\node[label={below left:4},inner sep=0] (5) at (230:3) {};
\node[label={below:5},inner sep=0] (6) at (270:3) {};
\node[label={below right:6},inner sep=0] (7) at (310:3) {};
\node[label={right:7},inner sep=0] (8) at (350:3) {};
\node[label={right:8},inner sep=0] (9) at (390:3) {};
\node[inner sep=0] (A) at (130:2) {};
\node[inner sep=0] (B) at (210:2) {};
\node[inner sep=0] (C) at (290:2) {};
\node[inner sep=0] (D) at (370:2) {};
\node[inner sep=0] (E) at (70:2) {};
\node[inner sep=0] (F) at (170:0.75) {};
\node[inner sep=0] (G) at (330:0.75) {};
\draw[thick] (1)--(E)--(F)--(E)--(G);
\draw[thick] (2)--(A)--(F)--(A)--(3);
\draw[thick] (4)--(B)--(F)--(B)--(5);
\draw[thick] (6)--(C)--(G)--(C)--(7);
\draw[thick] (8)--(D)--(G)--(D)--(9);
\draw[thick,draw=black,fill=white]  (1) circle (4pt);
\draw[thick,draw=black,fill=white]  (2) circle (4pt);
\draw[thick,draw=black,fill=white]  (3) circle (4pt);
\draw[thick,draw=black,fill=white]  (4) circle (4pt);
\draw[thick,draw=black,fill=white]  (5) circle (4pt);
\draw[thick,draw=black,fill=white]  (6) circle (4pt);
\draw[thick,draw=black,fill=white]  (7) circle (4pt);
\draw[thick,draw=black,fill=white]  (8) circle (4pt);
\draw[thick,draw=black,fill=white]  (9) circle (4pt);
\draw[thick,draw=black,fill=black]  (A) circle (4pt);
\draw[thick,draw=black,fill=black]  (B) circle (4pt);
\draw[thick,draw=black,fill=black]  (C) circle (4pt);
\draw[thick,draw=black,fill=black]  (D) circle (4pt);
\draw[thick,draw=black,fill=black]  (E) circle (4pt);
\draw[thick,draw=black,fill=white]  (F) circle (4pt);
\draw[thick,draw=black,fill=white]  (G) circle (4pt);
\end{tikzpicture}};
\node at (7,-2.5) {Rotation};
\draw[thick,->] (Cx) -- (Dx);
\draw[thick,->] (Ax) -- (Cx);
\draw[thick,->] (Bx) -- (Dx);
\end{tikzpicture}
\caption{Promotion of Kreweras words as rotation of webs.} \label{fig:web-rotation}
\end{figure}

Dyck words of length $2n$ correspond to linear extensions of $[2]\times [n]$, and hence carry an action of promotion. \Cref{fig:dyck_noncrossing} depicts a well-known bijection between Dyck words of length~$2n$ and noncrossing matchings of~$[2n]  \coloneqq  \{1,2,\ldots,2n\}$, and shows how under this bijection promotion of Dyck words corresponds to rotation of noncrossing matchings (this was first observed by Dennis White: see~\cite[\S 8]{rhoades2010cyclic}). This observation immediately implies that $\pro^{2n}(w)=w$ for $w$ a Dyck word of length~$2n$.

Our proof of \cref{thm:main} is also essentially based on a diagrammatic representation of Kreweras words for which promotion corresponds to rotation; see~\cref{fig:web-rotation}. However, these diagrams are not coming from Bernardi's cubic map bijection. Instead, they are related to Kuperberg's webs.

\dfn{Webs} are certain trivalent bipartite planar graphs which Kuperberg~\cite{kuperberg1996spiders} introduced in order to study the invariant theory of Lie algebras. Khovanov and Kuperberg~\cite{khovanov1999web} showed that a particular class of webs (namely, irreducible $\mathfrak{sl}_3$-webs with $3n$ white boundary vertices) are in bijection with linear extensions of $[3]\times [n]$. Petersen, Pylyavskyy, and Rhoades~\cite{petersen2009promotion} (see also Tymoczko~\cite{tymoczko2012simple}) showed that, via the Khovanov-Kuperberg bijection, rotation of webs corresponds to promotion of linear extensions.

We say that $\W$ is a \dfn{Kreweras web} if $\W$ is an irreducible $\mathfrak{sl}_3$-web with all boundary vertices white and having no internal face with a multiple of four sides.  We define a surjective map $w \mapsto \W_w$ from Kreweras words to Kreweras webs.  This map behaves well with respect to  Sch\"{u}tzenberger's operators:

\begin{thm} \label{thm:intro_web}
Let $w$ be a Kreweras word. Then,
\begin{enumerate}[(a)]
\item $\W_{\pro(w)} = \rot(\W_{w})$;
\item $\W_{\evac(w)} = \flip(\W_{w})$,
\end{enumerate}
where $\rot$ denotes the rotation of a web and $\flip$ its reflection across a diameter.
\end{thm}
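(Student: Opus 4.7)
The plan is to reduce the theorem to the established correspondence, due to Petersen--Pylyavskyy--Rhoades, between promotion of linear extensions of $[3]\times[n]$ and rotation of the associated $\mathfrak{sl}_3$-web under the Khovanov--Kuperberg bijection, together with its analogue for evacuation and reflection. The strategy is to factor the map $w \mapsto \W_w$ through a lifting of Kreweras words to linear extensions of $[3]\times[n]$, and then transport the known promotion--rotation and evacuation--flip correspondences along this factorization.

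The first step is to produce, for each Kreweras word $w$, a linear extension $\widetilde{w}$ of $[3]\times[n]$ that refines $w$---equivalently, an ordering of the $B$'s and $C$'s in $w$ so that every prefix has at least as many $B$'s as $C$'s (or vice versa)---and to show that the Khovanov--Kuperberg web of $\widetilde{w}$ depends only on $w$ and equals $\W_w$. The fibers of the surjection $w \mapsto \W_w$ should then correspond exactly to equivalence classes of lifts differing by reorderings of the $B$'s and $C$'s, and the defining condition that $\W_w$ have no internal face with a multiple of four sides should be the manifestation, on the web side, of this $B \leftrightarrow C$ symmetry. Verifying this invariance should proceed via the explicit growth rules for $\mathfrak{sl}_3$-webs.

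With this identification in hand, part (a) follows once one verifies that Kreweras-promotion descends from $[3]\times[n]$-promotion on lifts: if $\widetilde{w}$ is a lift of $w$, then $\pro(\widetilde{w})$ is a lift of $\pro(w)$. Kreweras-promotion stops its sweep at the first prefix where $A$'s tie either $B$'s or $C$'s, whereas $[3]\times[n]$-promotion stops at the first $A$-$B$ tie. A case analysis, tracking how the stopping letter interacts with the $B/C$ reordering built into the lift, should yield the compatibility and hence, via Petersen--Pylyavskyy--Rhoades, statement (a). Part (b) follows by the identical argument with promotion replaced by evacuation throughout.

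The main obstacle is the compatibility step just described. Since Kreweras-promotion may stop at a $C$, whereas $[3]\times[n]$-promotion must first traverse any $C$'s preceding a tied $B$, the two operations are not literally equal on lifts; they can agree only up to the $B \leftrightarrow C$ reordering that defines the fibers. Making this precise likely requires a growth-rule formulation of the lift map in which the $B \leftrightarrow C$ symmetry is manifest, so that the argument handles both parts uniformly and reduces cleanly to the Petersen--Pylyavskyy--Rhoades theorem and its evacuation analogue.
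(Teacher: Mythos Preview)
Your plan has a fundamental gap at the very first step, before the compatibility question you flag as the ``main obstacle.'' You propose to choose, for each Kreweras word $w$, a lift $\widetilde{w}$ to a linear extension of $[3]\times[n]$ and then assert that the Khovanov--Kuperberg web of $\widetilde{w}$ depends only on $w$ and equals $\W_w$. But the Khovanov--Kuperberg correspondence is a \emph{bijection} between $\mathcal{L}([3]\times[n])$ and irreducible $\mathfrak{sl}_3$-webs with $3n$ white boundary vertices; distinct lifts $\widetilde{w}$ therefore produce distinct webs, so the Khovanov--Kuperberg web cannot be constant on the fibers you describe. Moreover, even when $w$ itself already lies in $\mathcal{L}([3]\times[n])$ (so no choice of lift is needed), the paper notes explicitly in \cref{subsec:webs} that ``the web obtained from a linear extension of~$[3]\times[n]$ via the Khovanov--Kuperberg/Tymoczko bijection is not the same as the web~$\W_w$ for its corresponding Kreweras word~$w$.'' So the factorization $w\mapsto\widetilde{w}\mapsto \W_w$ through the Petersen--Pylyavskyy--Rhoades picture simply does not exist, and the reduction strategy collapses before you ever get to the promotion--compatibility issue. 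The authors themselves remark that they do not know a precise connection between their construction and the $[3]\times[n]$ story.

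The paper's actual argument is quite different and avoids this obstruction entirely. It shows (\cref{prop:trip_defs_agree}) that the trip permutation $\mathrm{trip}_{\W_w}$ of the web, computed by Postnikov's ``rules of the road,'' coincides with the permutation $\sigma_w$ extracted from the Kreweras bump diagram. The key lemmas (\cref{lem:key} and \cref{lem:perm_evac}) establish directly that $\sigma_{\pro(w)}=\rot(\sigma_w)$ and $\sigma_{\evac(w)}=\rc(\sigma_w^{-1})$. Since $\mathrm{trip}_{\rot(\W)}=\rot(\mathrm{trip}_{\W})$ and $\mathrm{trip}_{\flip(\W)}=\rc(\mathrm{trip}_{\W}^{-1})$ for any web, the proof is completed by invoking Postnikov's rigidity result for reduced plabic graphs (\cref{lem:trip_perm_equal}): an irreducible $\mathfrak{sl}_3$-web with all white boundary vertices is uniquely determined by its trip permutation. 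Thus the proof never lifts to $[3]\times[n]$ at all; it passes through trip permutations and plabic-graph uniqueness instead.
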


The map between Kreweras words and Kreweras webs can be made bijective by keeping track of a certain $3$-edge-coloring of the web. We then obtain the following enumerative corollaries.

\begin{thm} \label{thm:weighted_kreweras_web_count}
We have
\[ \sum_{\W} 2^{\kappa(\W)} = K_n = \frac{4^n}{(n+1)(2n+1)}\binom{3n}{n}  \qquad \textrm{(c.f.~\url{http://oeis.org/A006335})},  \]
where the sum is over all Kreweras webs $\W$ with $3n$ boundary vertices, and $\kappa(\W)$ is the number of connected components of $\W$. Moreover, the number of \emph{connected} Kreweras webs $\W$ with $3n$ boundary vertices is
\[ 2^{n} \, \frac{(4n-3)!}{(3n-1)!n!}  \qquad \textrm{(c.f.~\url{http://oeis.org/A000260})}.\]
\end{thm}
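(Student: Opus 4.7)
The plan is to deduce both enumerative statements as corollaries of the bijection, set up earlier in the paper, between Kreweras words of length $3n$ and pairs $(\W, c)$, where $\W$ is a Kreweras web and $c$ is a canonical $3$-edge-coloring of~$\W$.

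\textbf{First identity.} By Kreweras's classical product formula, the number of Kreweras words of length~$3n$ equals $K_n$, and by the bijection this is also the total number of $(\W,c)$ pairs. So $\sum_\W 2^{\kappa(\W)} = K_n$ will follow once we verify that every Kreweras web admits exactly $2^{\kappa(\W)}$ valid colorings. Because colorings of a disjoint union are determined independently on each component, this reduces to showing that a connected Kreweras web has exactly two valid colorings. I would prove this by observing that any two proper $3$-edge-colorings of a connected cubic bipartite planar graph differ by swapping a pair of colors along a disjoint union of bichromatic cycles, each of length divisible by four; the irreducibility together with the no-$4k$-face hypothesis then rules out all such cycles except the global swap of two colors, leaving exactly two colorings per connected component.

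\textbf{Second identity.} Let $C(x) = \sum_{n \ge 1} c_n x^n$ be the ordinary generating function for connected Kreweras webs. By planarity, the connected components of a Kreweras web partition the cyclic boundary into disjoint contiguous arcs, so a ``pointing'' decomposition -- conditioning on the component containing a distinguished boundary vertex and peeling it off, while the remainder lives on a linear arc as a sequence of connected components -- produces a functional equation relating $C(x)$ to $K(x) = \sum_{n \ge 0} K_n x^n$. Substituting Kreweras's closed form for $K(x)$ gives an algebraic equation for $C(x)$, from which the explicit formula $c_n = 2^n (4n-3)!/((3n-1)!\,n!)$ follows by Lagrange inversion.

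The step I expect to be most delicate is the two-colorings count for connected Kreweras webs: verifying that the face-length hypothesis rules out every nontrivial bichromatic-cycle recoloring requires a careful induction on the web's face structure, most naturally proceeding by reducing along an internal face of length~$6$ and checking that both the irreducibility and the absence of $4k$-faces are preserved in the smaller web. Once this is in hand, setting up the functional equation and applying Lagrange inversion to obtain the closed form for $c_n$ should be essentially routine.
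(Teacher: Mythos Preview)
There is a genuine gap in your argument for the first identity. You equate ``valid coloring'' with ``proper $3$-edge-coloring'' and then try to count the latter via a bichromatic-cycle argument, but neither step is correct.

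First, the colorings $c_w$ that arise from Kreweras words are \emph{not} all proper $3$-edge-colorings of $\W$. Already for the claw $K_{1,3}$ (the web of the words $\A\B\C$ and $\A\C\B$) there are $3! = 6$ proper edge-colorings but only $2$ valid ones: in any $c_w$ the edge at boundary vertex~$1$ must be avocado, since $w_1 = \A$. More generally the avocado edges of $c_w$ are highly constrained --- at each boundary vertex by $\sigma_w = \mathrm{trip}_{\W}$, and at each internal vertex by which neighbour is its ``partner'' from a broken-apart crossing --- and none of this is captured by the bare notion of a proper coloring.

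Second, even granting a fixed avocado class, your claim that recoloring cycles have length divisible by~$4$ is false. A Kreweras web is not cubic (boundary vertices have degree~$1$), so blue/crimson components can be paths of any even length; and on a cubic bipartite planar graph, bichromatic cycles of length $\equiv 2 \pmod 4$ certainly occur (take a hexagonal prism and color the two hexagons with colors $1,2$ and the rungs with color~$3$). The ``no $4k$-face'' hypothesis controls the length of \emph{face boundaries}, not of arbitrary bichromatic cycles, and there is no direct passage from one to the other.

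The paper's route is quite different. It does not count colorings abstractly; instead it gives an explicit greedy procedure (Construction~2) that builds a coloring together with a system of $2n$ directed paths, with exactly one binary choice per connected component. The substance of the proof is showing this procedure always succeeds on a Kreweras web, and this is where the face hypothesis actually enters: via an Euler-characteristic lemma guaranteeing that some internal face has three consecutive sides on the outer boundary, which is then detached in an induction on the number of internal faces. Your proposed induction ``along a face of length~$6$'' is in the right spirit, but the invariant being maintained is the success of Construction~2, not a statement about proper colorings.

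For the second identity your plan is essentially what the paper does: it sets up $K(x) = 1 + K^c(xK(x))$ by inserting arbitrary Kreweras words after each letter of a connected one, extracts $K^c_n = 2^{n+1}\frac{(4n-3)!}{(3n-1)!\,n!}$ by Lagrange inversion, and then halves to get $W^c_n$ using the first identity on connected webs.
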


A curious feature of our work not present in any previous work we are aware of is the use of \dfn{trip permutations}, in the sense of Postnikov's theory of \dfn{plabic graphs}~\cite{postnikov2006total}, to study webs.

The rest of the paper is organized as follows. In \cref{sec:promotion} we review promotion and evacuation of linear extensions. In \cref{sec:proof} we prove \cref{thm:main} by using what we call the \dfn{Kreweras bump diagram} of a Kreweras word $w$, and extracting from this diagram a permutation $\sigma_w$ which rotates under promotion. This permutation~$\sigma_w$ is in fact a trip permutation. In \cref{sec:evac} we discuss evacuation of Kreweras words, using the theory of \dfn{growth diagrams}. In \cref{sec:webs} we reinterpret our results in the language of webs and plabic graphs. Finally, in \cref{sec:future} we briefly discuss some possible future directions.

\medskip

\subsection*{Acknowledgements}

S.H. thanks Ira Gessel, whose answer to a MathOverflow question of his~\cite{gessel2019MO} made him aware of the paper~\cite{kreweras1981solution} and the poset $V(n)$, and thus initiated this research.

\section{Promotion and evacuation of linear extensions} \label{sec:promotion}

In this section we quickly review background on promotion of linear extensions of posets, and explain precisely how promotion of Kreweras words as defined in \cref{sec:intro} fits into that framework. We assume that the reader is familiar with basic notions from poset theory as laid out for instance in~\cite[Chapter 3]{stanley2012ec1}. Throughout, all posets are assumed to be finite.

Let $P$ be a poset with $\ell$ elements. For us, a \dfn{linear extension} of $P$ will be a list $(p_1,p_2,\ldots,p_{\ell})$ of all of the elements of $P$, each appearing once, for which $p_i \leq p_j$ implies that~$i \leq j$. We use~$\mathcal{L}(P)$ denote the set of linear extensions of~$P$.

For each $1 \leq i \leq \ell-1$, we define the involution $\tau_i\colon \mathcal{L}(P) \to \mathcal{L}(P)$ by
\[ \tau_i(p_1,\ldots,p_{\ell})  \coloneqq  \begin{cases} (p_1,\ldots,p_{i-1},p_{i+1},p_{i},p_{i+2},\ldots,p_{\ell}) &\parbox{1.15in}{if $p_{i}$ and $p_{i+1}$ are incomparable;} \\ (p_1,\ldots,p_{\ell}) &\textrm{otherwise}. \end{cases} \]
In other words, $\tau_i$ swaps the $i$th and $(i+1)$st entries of a linear extension, if possible.

\begin{definition}
\dfn{Promotion}, $\pro\colon \mathcal{L}(P)\to \mathcal{L}(P)$, is the following composition of~$\tau_i$:
\[ \pro  \coloneqq  \tau_{\ell-1} \circ \tau_{\ell-2} \circ \cdots \circ \tau_1. \]
\end{definition}

Being a composition of involutions, promotion is evidently invertible. There is another definition of promotion in terms of \dfn{jeu de taquin} slides, but we find the definition as composition of the $\tau_i$ more convenient; see Stanley's survey~\cite[\S 2]{stanley2009promotion} for the details.

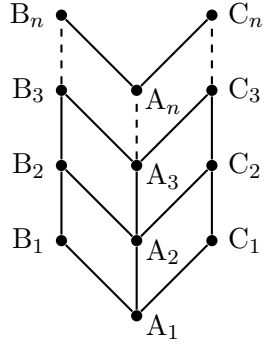
\begin{figure}
\begin{center}
\begin{tikzpicture}
\node[shape=circle,fill=black,inner sep=1.5,label={[yshift=-15,xshift=10]:$\A_1$}] (A1) at (0,-1) {};
\node[shape=circle,fill=black,inner sep=1.5,label={left:$\B_1$}] (B1) at (-1,0) {};
\node[shape=circle,fill=black,inner sep=1.5,label={right:$\C_1$}] (C1) at (1,0) {};
\node[shape=circle,fill=black,inner sep=1.5,label={[yshift=-15,xshift=10]:$\A_2$}] (A2) at (0,0) {};
\node[shape=circle,fill=black,inner sep=1.5,label={left:$\B_2$}] (B2) at (-1,1) {};
\node[shape=circle,fill=black,inner sep=1.5,label={right:$\C_2$}] (C2) at (1,1) {};
\node[shape=circle,fill=black,inner sep=1.5,label={[yshift=-15,xshift=10]:$\A_3$}] (A3) at (0,1) {};
\node[shape=circle,fill=black,inner sep=1.5,label={left:$\B_3$}] (B3) at (-1,2) {};
\node[shape=circle,fill=black,inner sep=1.5,label={right:$\C_3$}] (C3) at (1,2) {};
\node[shape=circle,fill=black,inner sep=1.5,label={[yshift=-15,xshift=10]:$\A_n$}] (An) at (0,2) {};
\node[shape=circle,fill=black,inner sep=1.5,label={left:$\B_n$}] (Bn) at (-1,3) {};
\node[shape=circle,fill=black,inner sep=1.5,label={right:$\C_n$}] (Cn) at (1,3) {};
\draw[thick] (B1)--(A1);
\draw[thick] (C1)--(A1);
\draw[thick] (A1)--(A2);
\draw[thick] (B1)--(B2);
\draw[thick] (C1)--(C2);
\draw[thick] (B2)--(A2);
\draw[thick] (C2)--(A2);
\draw[thick] (A2)--(A3);
\draw[thick] (B2)--(B3);
\draw[thick] (C2)--(C3);
\draw[thick] (B3)--(A3);
\draw[thick] (C3)--(A3);
\draw[thick,dashed] (A3)--(An);
\draw[thick,dashed] (B3)--(Bn);
\draw[thick,dashed] (C3)--(Cn);
\draw[thick] (Bn)--(An);
\draw[thick] (Cn)--(An);
\end{tikzpicture}
\end{center}
\caption{The poset $V(n)$ whose linear extensions are Kreweras words.}\label{fig:vn_poset}
\end{figure}

Recall that in \cref{sec:intro} we defined $V(n)  \coloneqq  \begin{tikzpicture}[scale=0.3] \node[shape=circle,fill=black,inner sep=1] (B) at (-1,0) {}; \node[shape=circle,fill=black,inner sep=1] (C) at (1,0) {}; \node[shape=circle,fill=black,inner sep=1] (A) at (0,-1) {}; \draw (B)--(A); \draw (C)--(A); \end{tikzpicture} \times [n]$ to be the poset which is the Cartesian product of the $3$-element ``V''-shaped poset and the $n$-element chain. We label the $3n$ elements of $V(n)$ by the symbols $\A_1,\A_2,\ldots,\A_n$, $\B_1,\B_2,\ldots,\B_n$, and $\C_1,\C_2,\ldots,\C_n$ as depicted in \cref{fig:vn_poset}.

Now let us show that promotion of Kreweras words is the same as promotion of linear extensions of $V(n)$:

\begin{prop} \label{prop:kword_poset_pro}
``Removing the subscripts'' is a bijection from linear extensions of~$V(n)$ to Kreweras words of length $3n$, and under this bijection, promotion of linear extensions corresponds to promotion of Kreweras words as was defined in \cref{sec:intro}.
\end{prop}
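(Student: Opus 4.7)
The plan is to prove the proposition in two parts: first verify that ``removing subscripts'' is a bijection, then check compatibility with $\pro$.

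For the bijection, I would first show that removing subscripts from a linear extension $L=(p_1,\ldots,p_{3n})$ of $V(n)$ yields a Kreweras word: within any prefix, the $\B$-entries form an initial segment $\B_1,\ldots,\B_\beta$ of the $\B$-chain (since linear extensions respect chains), and $\A_\beta\leq\B_\beta$ in $V(n)$ forces $\A_\beta$ into the prefix too, so $\#\A\geq\#\B$; symmetrically $\#\A\geq\#\C$. Conversely, adding subscripts to a Kreweras word $w$ in order of letter appearance respects every cover relation of $V(n)$: the chain covers $\A_i\lessdot\A_{i+1}$, $\B_i\lessdot\B_{i+1}$, $\C_i\lessdot\C_{i+1}$ are automatic, and $\A_i\lessdot\B_i$ (resp.\ $\A_i\lessdot\C_i$) holds because at the position of the $i$-th $\B$ (resp.\ $\C$) the Kreweras condition guarantees that at least $i$ copies of $\A$ have already appeared. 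The two maps are plainly mutually inverse.

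For promotion compatibility, I would use the equivalence between the $\tau_i$-composition definition of $\pro$ and its jeu-de-taquin description (see Stanley's survey~\cite{stanley2009promotion}): delete the unique minimum $\A_1=p_1$ of $V(n)$; slide the resulting hole upward by repeatedly moving it to the covering element with smallest label; when the hole reaches a maximal element, fill it with label $3n+1$; then subtract $1$ from all labels. The heart of the argument is the claim that the hole's trajectory is exactly $\A_1\to\A_2\to\cdots\to\A_j\to X_j\to X_{j+1}\to\cdots\to X_n$, where $X=w_\iota\in\{\B,\C\}$ (writing $\iota=\iota(w)$) and $j$ is the common value of $\#\A$ and $\#X$ in the prefix $(w_1,\ldots,w_\iota)$.

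Verifying this hole path is the main obstacle. Writing $a_i,b_i,c_i$ for the positions of the $i$-th $\A$, $\B$, $\C$ in $w$, the ascent $\A_i\to\A_{i+1}$ for $i<j$ requires $a_{i+1}<\min(b_i,c_i)$. This follows from the minimality of $\iota$, which forces $\#\A>\#\B$ and $\#\A>\#\C$ at every position strictly less than $\iota$, together with the observation that $a_{i+1}\leq\iota$ whenever $i+1\leq j$ (needed when $c_i>\iota$). At $\A_j$, one checks that $b_j=\iota$ is smaller than both $a_{j+1}$ and $c_j$ when $X=\B$, and symmetrically when $X=\C$, so the hole leaves the $\A$-chain; from $X_j$ it has no choice but to ascend the unique $X$-chain up to $X_n$.

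Once the hole path is in hand the new labels are routine (taking $X=\B$): $\A_i$ for $i<j$ receives $a_{i+1}-1$, $\A_j$ receives $\iota-1$, $\B_i$ for $j\leq i<n$ receives $b_{i+1}-1$, $\B_n$ receives $3n$, and every non-visited element drops by $1$. Reading off letters by new position reproduces exactly $\pro(w)=(w_2,\ldots,w_{\iota-1},\A,w_{\iota+1},\ldots,w_{3n},w_\iota)$: position $3n$ carries $\B=w_\iota$, position $\iota-1$ carries $\A$, and for every other $k$ either some visited $\A_i$ or $\B_i$ lands at position $k$ contributing the correct letter of $w_{k+1}$, or a non-visited element with old label $k+1$ moves to position $k$ carrying letter $w_{k+1}$.
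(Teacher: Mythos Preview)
Your proof is correct, but it takes a different route from the paper's. The paper stays entirely at the level of words: it observes that under the bijection each $\tau_i$ acts on a Kreweras word by swapping $w_i$ and $w_{i+1}$ when the result is still a Kreweras word, and then tracks the composition $\tau_{3n-1}\circ\cdots\circ\tau_1$ directly. The initial $\A$ gets pushed rightward through positions $1,2,\ldots,\iota(w)-1$ (no swap ever fails because every proper prefix before $\iota(w)$ has strictly more $\A$'s than $\B$'s or $\C$'s), the swap at position $\iota(w)-1$ fails, and then $w_{\iota(w)}$ is pushed from position $\iota(w)$ all the way to the end. This reproduces the formula for $\pro(w)$ without ever naming $a_i,b_i,c_i$ or invoking jeu de taquin.

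You instead pass through the jeu de taquin description on the poset $V(n)$, identifying the hole path $\A_1\to\cdots\to\A_j\to X_j\to\cdots\to X_n$ and verifying the necessary inequalities among the positions $a_i,b_i,c_i$. The two arguments are of course closely related (the hole leaving the $\A$-chain at $\A_j$ is exactly the first failed swap at position $\iota-1$), but the paper's word-level version avoids most of the positional bookkeeping: it does not need to name $j$, argue separately for $b_i<\iota$ versus $c_i>\iota$, or read off the final labels element by element. Your approach, on the other hand, makes the structure of the sliding visible inside $V(n)$ and would generalize more readily to other posets where the word-level shortcut is unavailable.
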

\begin{proof}
The claim about removing the subscripts being a bijection is straightforward. The comparison of the two promotions is also essentially straightforward. Let $w$ be a Kreweras word of length $3n$, and consider the $\tau_i$ as acting on $w$ via the aforementioned bijection. Then the effect of $\tau_i$ on~$w$ is
\[ \tau_i(w) = \begin{cases} (w_1,\ldots,w_{i-1},w_{i+1},w_i,w_{i+2},\ldots,w_{3n}) &\textrm{if this is a Kreweras word}; \\ w &\textrm{otherwise}.\end{cases}\]
Consider the application of $\tau_{3n-1} \circ \cdots \circ \tau_1$ to $w$. Recall the definition of $\iota(w)$ from \cref{sec:intro}. When applying $\tau_i$ for $i=1,\ldots,\iota(w)-2$ to $w$, we will never be in the ``otherwise'' case above; so the effect of $\tau_{\iota(w)-2} \circ \cdots \circ \tau_{1}$ will be to cyclically rotate the substring $(w_1,\ldots,w_{\iota(w)-1})$. In other words, we ``push'' the initial $\A$ in $w$ as far to the right as we can. Then, when applying $\tau_{\iota(w)-1}$ we will for the first time be in the ``otherwise'' case, so $\tau_{\iota(w)-1}$ will act as the identity. Finally, when applying $\tau_i$ for $i=\iota(w)+1,\ldots,3n-1$ to $w$, we will again never be in the ``otherwise'' case; so the effect of $\tau_{3n-1} \circ \cdots \circ \tau_{\iota(w)}$ will be to cyclically rotate the substring $(w_{\iota(w)},\ldots,w_{3n})$. In other words, we ``push'' the $\B$ or $\C$ in position $\iota(w)$ all the way to the end. Thus $\tau_{3n-1} \circ \cdots \circ \tau_1$ indeed acts on $w$ in the same way as $\pro$, as was defined in \cref{sec:intro}.
\end{proof}

Next, in order to motivate the study of (powers of) promotion, let us recall the basic results concerning promotion and its close cousin evacuation which were established by Sch\"{u}tzenberger.

\begin{definition}
\dfn{Evacuation}, $\evac\colon \mathcal{L}(P)\to \mathcal{L}(P)$, is the following composition of~$\tau_i$:
\[ \evac  \coloneqq  (\tau_1) \circ (\tau_2 \circ \tau_{1}) \circ \cdots \circ (\tau_{\ell-2} \circ \cdots \circ \tau_{2} \circ \tau_1) \circ (\tau_{\ell-1} \circ \cdots \circ \tau_{2} \circ \tau_1).\]
\end{definition}

Again, there is an alternative definition of evacuation in terms of jeu de taquin. Historically, interest in evacuation arose because of its close connection with the Robinson-Schensted correspondence.

There is an obvious duality between the linear extensions of $P$ and of its dual poset~$P^*$: for $L = (p_1,\ldots,p_{\ell}) \in \mathcal{L}(P)$, we use $L^*$ to denote the linear extension $L^*  \coloneqq  (p_{\ell},p_{\ell-1},\ldots,p_1) \in \mathcal{L}(P^*)$ of the dual poset.

\dfn{Dual evacuation}, $\evac^{*}\colon \mathcal{L}(P)\to \mathcal{L}(P)$, is defined by $\evac^{*}(L)  \coloneqq  (\evac(L^*))^*$. In some sense, dual evacuation is ``just as natural'' as evacuation. In terms of the involutions $\tau_i$, we have
\[\evac^{*} = (\tau_{\ell-1}\circ \cdots \circ \tau_1) \circ (\tau_{\ell-1}\circ \cdots \circ \tau_2) \circ \cdots \circ (\tau_{\ell-1}\circ\tau_{\ell-2}) \circ (\tau_{\ell-1}) \]
Note that $L \mapsto (\pro(L^*))^*$ is just $\pro^{-1}$, so we do not give it a different name.

The basic results of Sch\"{u}tzenberger~\cite{schutzenberger1963quelques, schutzenberger1972promotion, schutzenberger1973evacuations} on promotion and evacuation of an arbitrary poset $P$ are summarized in the following proposition; again, see Stanley~\cite[\S 2]{stanley2009promotion} for a modern presentation.

\begin{prop} \label{prop:pro_evac_basics}
For any poset $P$, we have that:
\begin{itemize}
\item $\evac$ and $\evac^{*}$ are both involutions;
\item $\evac \circ \pro = \pro^{-1} \circ \evac$;
\item $\pro^{\#P} =  \evac^{*} \circ \evac$.
\end{itemize}
\end{prop}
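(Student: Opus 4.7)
The plan is to prove all three statements using only two structural facts about the generators: each $\tau_i$ is an involution (immediate from the case analysis in its definition), and $\tau_i$ and $\tau_j$ commute whenever $|i-j|\ge 2$ (since the condition for a swap at position $i$ depends only on $p_i,p_{i+1}$, disjoint from $p_j,p_{j+1}$). All three identities should then drop out by a careful algebraic manipulation of the words in the $\tau_i$'s, supplemented by induction on $\ell = \#P$. This is a purely formal argument: crucially, the $\tau_i$ need not satisfy a braid relation, but the proposition never forces us to use one.

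For involutivity of $\evac$, I would proceed by induction on $\ell$. The base case $\ell=1$ is vacuous. For the inductive step, I would isolate the rightmost factor $\pro = \tau_{\ell-1}\circ \cdots \circ\tau_1$ in the expansion of $\evac$, writing $\evac = \evac' \circ \pro$, where $\evac'$ is evacuation for the first $\ell-1$ positions and acts trivially on the last entry of the linear extension (since it never touches $\tau_{\ell-1}$). Then $\evac^2 = \evac' \pro \evac' \pro$, and by combining the conjugation identity $\evac' \pro = \pro^{-1} \evac'$ (applied in its appropriate inductive form) with the inductive hypothesis $(\evac')^2 = \id$, the expression collapses to $\id$. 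Involutivity of $\evac^*$ then follows from the duality $\evac^*(L) = (\evac(L^*))^*$ together with the observation that $L\mapsto L^*$ is itself an involution intertwining $\evac$ and $\evac^*$. For the conjugation identity $\evac\circ\pro = \pro^{-1}\circ\evac$ itself, I would push the factor $\pro$ from the right of $\evac$ leftward through the nested product defining $\evac$, using the commutation $\tau_i\tau_j=\tau_j\tau_i$ for $|i-j|\ge 2$ to peel off one factor at a time. The effect, analogous to how the longest element of $\Sym_\ell$ inverts simple reflections by conjugation, is that the word $\tau_{\ell-1}\cdots\tau_1$ comes out on the left in reversed order, which is exactly $\pro^{-1} = \tau_1\circ\cdots\circ\tau_{\ell-1}$.

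The hardest step is the rotational identity $\pro^{\#P} = \evac^{*}\circ \evac$. The strategy I would follow is to expand $\evac^{*}\circ \evac$ as the explicit product $(\rho_1\rho_2\cdots\rho_{\ell-1})(\pi_1\pi_2\cdots\pi_{\ell-1})$, where $\pi_k=\tau_k\cdots\tau_1$ and $\rho_k=\tau_{\ell-1}\cdots\tau_k$, and then use the commutation relations to rearrange the product into $\ell$ consecutive copies of $\pro = \tau_{\ell-1}\cdots\tau_1$. A clean way to organize the bookkeeping is to induct on $\ell$ and use the conjugation identity from the previous step to telescope: one shows $\evac^* \evac = \pro \cdot (\evac^*)'(\evac)' \cdot (\text{correction})$, where the primes denote the corresponding operators on $\ell-1$ elements, and then invokes the inductive hypothesis $(\evac^*)'(\evac)' = \pro'^{\,\ell-1}$.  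Making this rearrangement precise is the main obstacle, because without the braid relation one has less freedom; one has to carefully identify which transpositions can be slid past which. An attractive alternative, following Stanley's survey, is to import the equivalent jeu de taquin description of $\pro$ and $\evac$, in which $\pro^\ell$ and $\evac^{*}\evac$ both manifestly implement the same ``rotate all labels all the way around'' operation; this bypasses the bookkeeping but requires first verifying the equivalence of the two definitions.
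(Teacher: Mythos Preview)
The paper does not prove this proposition at all: it is stated as background, attributed to Sch\"utzenberger, with a pointer to Stanley's survey for a modern treatment. Your overall strategy---derive everything from the two relations $\tau_i^2=\mathrm{id}$ and $\tau_i\tau_j=\tau_j\tau_i$ for $|i-j|\ge 2$---is exactly the approach taken in that survey, so in spirit you are aligned with what the paper cites.

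That said, the inductive step you outline for $\evac^2=\mathrm{id}$ does not go through as written. You write $\evac=\evac'\circ\pro$ and then invoke ``the conjugation identity $\evac'\pro=\pro^{-1}\evac'$ in its appropriate inductive form.'' But the inductive hypothesis at level $\ell-1$ gives only $\evac'\pro'=(\pro')^{-1}\evac'$ with $\pro'=\tau_{\ell-2}\cdots\tau_1$, whereas your $\pro=\tau_{\ell-1}\cdots\tau_1$ contains $\tau_{\ell-1}$. Since $\tau_{\ell-1}$ does \emph{not} commute with $\evac'$ (the latter involves $\tau_{\ell-2}$), there is no direct way to pass from the inductive statement to the one you need. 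The same level-mismatch issue lurks in your sketch of the third identity, which you yourself flag as the main obstacle.

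The clean fix, and the heart of the standard argument, is a single rewriting lemma: using only distant commutation one shows
\[
(\tau_1)(\tau_2\tau_1)\cdots(\tau_k\cdots\tau_1)=(\tau_1\tau_2\cdots\tau_k)(\tau_1\cdots\tau_{k-1})\cdots(\tau_1),
\]
i.e.\ the triangular product can be read either by rows or by columns. Involutivity of $\evac$ is then immediate, since the right-hand side is visibly the inverse of the left-hand side. The conjugation identity and $\pro^{\#P}=\evac^*\circ\evac$ follow by the same device applied to the appropriate triangles. I would recommend reorganizing your argument around this lemma rather than the cross-level induction you propose.
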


Because $\pro^{\#P}$ is the composition of the ``natural'' involutions $\evac$ and $\evac^{*}$, this power of $\pro$ is somehow the ``right'' power to consider. (Sometimes older sources refer to $\pro^{\#P}$ as \dfn{total promotion}, but we will eschew this terminology as it tends to confuse.) Following~\cite[\S 4]{stanley2009promotion}, we now review the posets $P$ for which the behavior of~$\pro^{\#P}$ is understood.

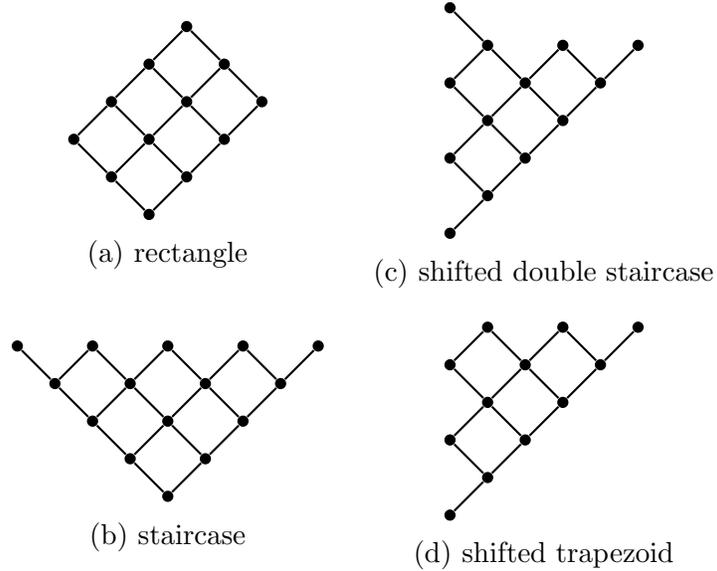
\begin{figure}
\begin{center}
\begin{tikzpicture}
\node [label={below:(a) rectangle}] at (0,0) {\begin{tikzpicture}[scale=0.5]
\node[shape=circle,fill=black,inner sep=1.5] (1) at (0,0) {};
\node[shape=circle,fill=black,inner sep=1.5] (2) at (-1,1) {};
\node[shape=circle,fill=black,inner sep=1.5] (3) at (1,1) {};
\node[shape=circle,fill=black,inner sep=1.5] (4) at (-2,2) {};
\node[shape=circle,fill=black,inner sep=1.5] (5) at (0,2) {};
\node[shape=circle,fill=black,inner sep=1.5] (6) at (2,2) {};
\node[shape=circle,fill=black,inner sep=1.5] (7) at (-1,3) {};
\node[shape=circle,fill=black,inner sep=1.5] (8) at (1,3) {};
\node[shape=circle,fill=black,inner sep=1.5] (9) at (3,3) {};
\node[shape=circle,fill=black,inner sep=1.5] (10) at (0,4) {};
\node[shape=circle,fill=black,inner sep=1.5] (11) at (2,4) {};
\node[shape=circle,fill=black,inner sep=1.5] (12) at (1,5) {};
\draw[thick] (1)--(2);
\draw[thick] (1)--(3);
\draw[thick] (2)--(4);
\draw[thick] (2)--(5);
\draw[thick] (3)--(5);
\draw[thick] (3)--(6);
\draw[thick] (4)--(7);
\draw[thick] (5)--(7);
\draw[thick] (5)--(8);
\draw[thick] (6)--(8);
\draw[thick] (6)--(9);
\draw[thick] (7)--(10);
\draw[thick] (8)--(10);
\draw[thick] (8)--(11);
\draw[thick] (9)--(11);
\draw[thick] (10)--(12);
\draw[thick] (11)--(12);
\end{tikzpicture}};
\node [label={below:(b) staircase}] at (0,-4) {\begin{tikzpicture}[scale=0.5]
\node[shape=circle,fill=black,inner sep=1.5] (1) at (0,0) {};
\node[shape=circle,fill=black,inner sep=1.5] (2) at (-1,1) {};
\node[shape=circle,fill=black,inner sep=1.5] (3) at (1,1) {};
\node[shape=circle,fill=black,inner sep=1.5] (4) at (-2,2) {};
\node[shape=circle,fill=black,inner sep=1.5] (5) at (0,2) {};
\node[shape=circle,fill=black,inner sep=1.5] (6) at (2,2) {};
\node[shape=circle,fill=black,inner sep=1.5] (7) at (-3,3) {};
\node[shape=circle,fill=black,inner sep=1.5] (8) at (-1,3) {};
\node[shape=circle,fill=black,inner sep=1.5] (9) at (1,3) {};
\node[shape=circle,fill=black,inner sep=1.5] (10) at (3,3) {};
\node[shape=circle,fill=black,inner sep=1.5] (11) at (-4,4) {};
\node[shape=circle,fill=black,inner sep=1.5] (12) at (-2,4) {};
\node[shape=circle,fill=black,inner sep=1.5] (13) at (0,4) {};
\node[shape=circle,fill=black,inner sep=1.5] (14) at (2,4) {};
\node[shape=circle,fill=black,inner sep=1.5] (15) at (4,4) {};
\draw[thick] (1)--(2);
\draw[thick] (1)--(3);
\draw[thick] (2)--(4);
\draw[thick] (2)--(5);
\draw[thick] (3)--(5);
\draw[thick] (3)--(6);
\draw[thick] (4)--(7);
\draw[thick] (4)--(8);
\draw[thick] (5)--(8);
\draw[thick] (5)--(9);
\draw[thick] (6)--(9);
\draw[thick] (6)--(10);
\draw[thick] (7)--(11);
\draw[thick] (7)--(12);
\draw[thick] (8)--(12);
\draw[thick] (8)--(13);
\draw[thick] (9)--(13);
\draw[thick] (9)--(14);
\draw[thick] (10)--(14);
\draw[thick] (10)--(15);
\end{tikzpicture}};
\node [label={below:(c) shifted double staircase}] at (5,0) {\begin{tikzpicture}[scale=0.5]
\node[shape=circle,fill=black,inner sep=1.5] (1) at (0,0) {};
\node[shape=circle,fill=black,inner sep=1.5] (2) at (1,1) {};
\node[shape=circle,fill=black,inner sep=1.5] (3) at (0,2) {};
\node[shape=circle,fill=black,inner sep=1.5] (4) at (2,2) {};
\node[shape=circle,fill=black,inner sep=1.5] (5) at (1,3) {};
\node[shape=circle,fill=black,inner sep=1.5] (6) at (3,3) {};
\node[shape=circle,fill=black,inner sep=1.5] (7) at (0,4) {};
\node[shape=circle,fill=black,inner sep=1.5] (8) at (2,4) {};
\node[shape=circle,fill=black,inner sep=1.5] (9) at (4,4) {};
\node[shape=circle,fill=black,inner sep=1.5] (10) at (1,5) {};
\node[shape=circle,fill=black,inner sep=1.5] (11) at (3,5) {};
\node[shape=circle,fill=black,inner sep=1.5] (12) at (5,5) {};
\node[shape=circle,fill=black,inner sep=1.5] (13) at (0,6) {};
\draw[thick] (1)--(2);
\draw[thick] (2)--(3);
\draw[thick] (2)--(4);
\draw[thick] (3)--(5);
\draw[thick] (4)--(5);
\draw[thick] (4)--(6);
\draw[thick] (5)--(7);
\draw[thick] (5)--(8);
\draw[thick] (6)--(8);
\draw[thick] (6)--(9);
\draw[thick] (7)--(10);
\draw[thick] (8)--(10);
\draw[thick] (8)--(11);
\draw[thick] (9)--(11);
\draw[thick] (9)--(12);
\draw[thick] (10)--(13);
\end{tikzpicture}};
\node [label={below:(d) shifted trapezoid}] at (5,-4) {\begin{tikzpicture}[scale=0.5]
\node[shape=circle,fill=black,inner sep=1.5] (1) at (0,0) {};
\node[shape=circle,fill=black,inner sep=1.5] (2) at (1,1) {};
\node[shape=circle,fill=black,inner sep=1.5] (3) at (0,2) {};
\node[shape=circle,fill=black,inner sep=1.5] (4) at (2,2) {};
\node[shape=circle,fill=black,inner sep=1.5] (5) at (1,3) {};
\node[shape=circle,fill=black,inner sep=1.5] (6) at (3,3) {};
\node[shape=circle,fill=black,inner sep=1.5] (7) at (0,4) {};
\node[shape=circle,fill=black,inner sep=1.5] (8) at (2,4) {};
\node[shape=circle,fill=black,inner sep=1.5] (9) at (4,4) {};
\node[shape=circle,fill=black,inner sep=1.5] (10) at (1,5) {};
\node[shape=circle,fill=black,inner sep=1.5] (11) at (3,5) {};
\node[shape=circle,fill=black,inner sep=1.5] (12) at (5,5) {};
\draw[thick] (1)--(2);
\draw[thick] (2)--(3);
\draw[thick] (2)--(4);
\draw[thick] (3)--(5);
\draw[thick] (4)--(5);
\draw[thick] (4)--(6);
\draw[thick] (5)--(7);
\draw[thick] (5)--(8);
\draw[thick] (6)--(8);
\draw[thick] (6)--(9);
\draw[thick] (7)--(10);
\draw[thick] (8)--(10);
\draw[thick] (8)--(11);
\draw[thick] (9)--(11);
\draw[thick] (9)--(12);
\end{tikzpicture}};
\end{tikzpicture}
\end{center}
\caption{The previously known posets with good behavior of promotion.}\label{fig:haiman_posets}
\end{figure}

The fundamental properties of jeu de taquin which Sch\"{u}tzenberger established in~\cite{schutzenberger1977correspondance} imply that for $P=[k] \times [n]$ a product of two chains, a.k.a.~a \dfn{rectangle}, $\pro^{\#P}$ is the identity. While studying reduced decompositions in the symmetric group, Edelman and Greene~\cite{edelman1987balanced} showed that for $P$ a \dfn{staircase} (depicted in \cref{fig:haiman_posets}(b) by way of example), $\pro^{\#P}$ is transposition (i.e., the poset automorphism that is the reflection across the vertical axis of symmetry). Finally, Haiman~\cite{haiman1992dual} showed that for $P$ a \dfn{shifted double staircase} (\cref{fig:haiman_posets}(c)) or a \dfn{shifted trapezoid} (\cref{fig:haiman_posets}(d)), $\pro^{\#P}$ is the identity; and he did this using a general method (based on his notion of dual equivalence) which recaptures the rectangle and staircase results as well. In a follow-up paper, Haiman and Kim~\cite{haiman1992characterization} proved moreover that among Young diagram shapes and shifted shapes, the posets in \cref{fig:haiman_posets} are the only families for which $\pro^{\#P}$ is the identity or transposition.

In~\cite[\S4, Question 3]{stanley2009promotion}, Stanley asked whether there are any other (non-trivial) families of posets, beyond those depicted in \cref{fig:haiman_posets}, for which~$\pro^{\#P}$ can be described. Our main result, \cref{thm:main}, shows that $V(n)$ is such an example: for $P=V(n)$, $\pro^{\#P}$ is ``transposition'' (i.e., reflection across the vertical axis of symmetry).

\section{The order of promotion} \label{sec:proof}

In this section we prove \cref{thm:main}. Throughout we fix a nonnegative integer~$n$ as in the statement of that theorem. Also, from now on we adopt the notational convention $-\B: = \C$ and $-\C  \coloneqq  \B$.

Our strategy is to associate to each Kreweras word a permutation, such that promotion of the Kreweras word corresponds to rotation of the permutation.

\begin{definition}
We use $\Sym_{m}$ to denote the symmetric group on $m$ letters. We represent permutations $\sigma \in \Sym_{m}$ either via cycle notation, or via one-line notation as $\sigma = [\sigma(1),\ldots,\sigma(m)]$. For $\sigma \in \Sym_{m}$, the \dfn{rotation} of $\sigma$, denoted $\rot(\sigma)$, is the (right) conjugation of $\sigma$ by the \dfn{long cycle} $(1,2,\ldots,m) \in \Sym_{m}$; i.e.,
\[\rot(\sigma)  \coloneqq  (1,2,\ldots,m)^{-1} \circ \sigma \circ (1,2,\ldots,m).\]
Rotation of a permutation as we have just defined it corresponds to rotation of its functional digraph representation.
\end{definition}

We first associate a diagram to a Kreweras word, which we will then use to obtain the desired permutation. This diagram will be built out of arcs, and the crossings of the arcs in the diagram will play a significant role. So let us review notions related to arcs and crossings.

\begin{definition}
  An \dfn{arc} is a pair $(i,j)$ of positive integers with $i < j$.  A~\dfn{crossing} is a set $\{(i,j),(k,\ell)\}$ of two arcs such that $i\leq k < j < \ell$.
\end{definition}

Note that this definition slightly deviates from the usual notion, in that the pairs~$(i,j)$ and~$(i,\ell)$ form a crossing. However, this modification is only relevant when considering Kreweras bump diagrams, defined below.

\begin{definition}
  Let $\mathcal{A}$ be a collection of arcs. For a set of positive integers  $S$, we say that $\mathcal{A}$ is a \dfn{noncrossing matching of $S$} if for every $(i,j)\in \mathcal{A}$ we have $i,j \in S$, every $i\in S$ belongs to a unique arc in~$\mathcal{A}$, and no two arcs in $\mathcal{A}$ form a crossing. The set of \dfn{openers of $\mathcal{A}$} is $\{i\colon (i,j)\in \mathcal{A}\}$ and the set the set of \dfn{closers of~$\mathcal{A}$} is $\{j\colon (i,j)\in \mathcal{A}\}$.
\end{definition}

The bijection between Dyck words and noncrossing matchings suggested by \cref{fig:dyck_noncrossing} enters into the definition of the diagram we associate to a Kreweras word, so let us now formalize this bijection. Let $w$ be a Dyck word of length $2n$. We associate to $w$ the unique noncrossing matching of~$[2n]$ whose set of openers is~$\{i\in [2n]\colon w_i=\textrm{$\A$}\}$ and whose set of closers is~$\{i\in [2n]\colon w_i=B\}$. This sets up a (well-known) bijection between the Dyck words of length $2n$ and the noncrossing matchings of $[2n]$.

A Kreweras word can be thought of as two overlapping Dyck words, and hence has two noncrossing matchings naturally associated to it. As we now explain, the diagram for our Kreweras word will be the union of these two noncrossing matchings.

\begin{definition}
Let $w$ be a Kreweras word of length $3n$. Let $\varepsilon \in \{\textrm{$\B$},\textrm{$\C$}\}$. We use~$\M^\varepsilon_w$ to denote the noncrossing matching of $\{i\in [3n]\colon w_i\neq -\varepsilon\}$ whose set of openers is~$\{i\in [3n]\colon w_i=\textrm{$\A$}\}$ and whose set of closers is~$\{i\in [3n]\colon w_i=\varepsilon\}$.

The \dfn{Kreweras bump diagram} $\D_w$ of $w$ is obtained by placing the numbers $1,\ldots,3n$ in this order on a line, and drawing a semicircle above the line connecting $i$ and $j$ for each arc~$(i, j)\in \M^{B}_w \cup \M^{C}_w$.  The arc is solid {\bf b}lue if~$(i,j)\in\M^{B}_w$ and dashed {\bf c}rimson (i.e.,~red) if~$(i,j) \in \M^{C}_w$. The arcs are drawn in such a fashion that only pairs of arcs which form a crossing intersect, and any two arcs intersect at most once.
\end{definition}

\begin{example} \label{ex:bump_diagram}
As in \cref{ex:kword_pro}, let $w = \A \A \B \B \C \A \C \C \B$. The two noncrossing matchings $\M^B_w$ and $\M^C_w$, drawn as arc diagrams, are
\begin{center}
\begin{tikzpicture}[scale=0.85]
\def \x {0.7};
\node at (-1*\x,0) {$\M^B_w=$};
\node (1) at (1*\x,0) {1};
\node (2) at (2*\x,0) {2};
\node (3) at (3*\x,0) {3};
\node (4) at (4*\x,0) {4};
\node (5) at (5*\x,0) {5};
\node (6) at (6*\x,0) {6};
\node (7) at (7*\x,0) {7};
\node (8) at (8*\x,0) {8};
\node (9) at (9*\x,0) {9};
\node at (1*\x,-0.55*\x) {$\A$};
\node at (2*\x,-0.55*\x) {$\A$};
\node at (3*\x,-0.55*\x) {$\B$};
\node at (4*\x,-0.55*\x) {$\B$};
\node at (5*\x,-0.55*\x) {$\C$};
\node at (6*\x,-0.55*\x) {$\A$};
\node at (7*\x,-0.55*\x) {$\C$};
\node at (8*\x,-0.55*\x) {$\C$};
\node at (9*\x,-0.55*\x) {$\B$};
\draw [ultra thick, blue] (1) to [out=90, in=90] (4);
\draw [ultra thick, blue] (2) to [out=90, in=90] (3);
\draw [ultra thick, blue] (6) to [out=90, in=90] (9);
\end{tikzpicture} \\ \begin{tikzpicture}[scale=0.85]
\def \x {0.7};
\node at (-1*\x,0) {$\M^C_w=$};
\node (1) at (1*\x,0) {1};
\node (2) at (2*\x,0) {2};
\node (3) at (3*\x,0) {3};
\node (4) at (4*\x,0) {4};
\node (5) at (5*\x,0) {5};
\node (6) at (6*\x,0) {6};
\node (7) at (7*\x,0) {7};
\node (8) at (8*\x,0) {8};
\node (9) at (9*\x,0) {9};
\node at (1*\x,-0.55*\x) {$\A$};
\node at (2*\x,-0.55*\x) {$\A$};
\node at (3*\x,-0.55*\x) {$\B$};
\node at (4*\x,-0.55*\x) {$\B$};
\node at (5*\x,-0.55*\x) {$\C$};
\node at (6*\x,-0.55*\x) {$\A$};
\node at (7*\x,-0.55*\x) {$\C$};
\node at (8*\x,-0.55*\x) {$\C$};
\node at (9*\x,-0.55*\x) {$\B$};
\draw [ultra thick, red, dashed] (1) to [out=90, in=90] (8);
\draw [ultra thick, red, dashed] (2) to [out=90, in=90] (5);
\draw [ultra thick, red, dashed] (6) to [out=90, in=90] (7);
\end{tikzpicture}
\end{center}
The Kreweras bump diagram $\D_w$ is obtained by placing these two arc diagrams on top of one another:
\begin{center}
\begin{tikzpicture}[scale=0.85]
\def \x {0.7};
\node at (-1*\x,0) {$\D_w=$};
\node (1) at (1*\x,0) {1};
\node (2) at (2*\x,0) {2};
\node (3) at (3*\x,0) {3};
\node (4) at (4*\x,0) {4};
\node (5) at (5*\x,0) {5};
\node (6) at (6*\x,0) {6};
\node (7) at (7*\x,0) {7};
\node (8) at (8*\x,0) {8};
\node (9) at (9*\x,0) {9};
\node at (1*\x,-0.55*\x) {$\A$};
\node at (2*\x,-0.55*\x) {$\A$};
\node at (3*\x,-0.55*\x) {$\B$};
\node at (4*\x,-0.55*\x) {$\B$};
\node at (5*\x,-0.55*\x) {$\C$};
\node at (6*\x,-0.55*\x) {$\A$};
\node at (7*\x,-0.55*\x) {$\C$};
\node at (8*\x,-0.55*\x) {$\C$};
\node at (9*\x,-0.55*\x) {$\B$};
\draw [ultra thick,blue] (1) to [out=90, in=90] (4);
\draw [ultra thick,blue] (2) to [out=90, in=90] (3);
\draw [ultra thick,blue] (6) to [out=90, in=90] (9);
\draw [ultra thick,red,dashed] (1) to [out=90, in=90] (8);
\draw [ultra thick,red,dashed] (2) to [out=90, in=90] (5);
\draw [ultra thick,red,dashed] (6) to [out=90, in=90] (7);
\end{tikzpicture}
\end{center}
\end{example}

Now we extract from $\D_w$ a permutation $\sigma_w \in \Sym_{3n}$.

\begin{figure}
\begin{center}
\begin{tikzpicture}[scale=1.35]
\node at (-1,1){(a)};
\tikzset{->-/.style={decoration={ markings, mark=at position 0.25 with {\arrow{>}}},postaction={decorate}}};
\draw [thin,blue] (0,0) to [out=70, in=180] (1,1);
\draw [thin,red,dashed] (-0.5,0) to [out=90, in=180] (1,0.5);
\filldraw[black] (0.25,0.5) circle (1.5pt);
\draw [thick,->-,->] (-0.7,0.1) to [out=90, in=180] (1,0.58);
\draw [thick,->-,->] (1,1.2) to [out=180, in=70] (-0.05,0.1);
\draw [thick,->-,->] (1.1,0.7) to [out=180, in=0] (0.3,0.7) to [out=60, in=180] (1.1,1.1);
\draw [thick,->-,->] (-0.25,0.1) to [out=55,in=180+60] (0,0.5) to [out=180,in=90] (-0.6,0.1) ;
\end{tikzpicture}  \qquad  \qquad \qquad \begin{tikzpicture}[scale=1.35]
\node at (-0.5,1){(b)};
\tikzset{->-/.style={decoration={ markings, mark=at position 0.25 with {\arrow{>}}},postaction={decorate}}};
\draw [thin,blue] (0,0) to [out=70, in=170] (1,0.35);
\draw [thin,red,dashed] (0,0) to [out=90, in=190] (1,0.8);
\filldraw[black] (0,0) circle (1.5pt);
\draw [thick,->-,->] (0.15,0) to [out=70, in=180] (1,0.2);
\draw [thick,->-,->] (1,0.9) to [out=180, in=90] (-0.15,0);
\draw [thick,->-,->] (1,0.45) to [out=170, in=10] (0.25,0.4) to [out=45,in=180+15] (1,0.7);
\end{tikzpicture}
\end{center}
\caption{The rules of the road when taking a trip in a Kreweras bump diagram: (a) depicts what happens at an internal crossing, and (b) depicts what happens at a boundary crossing. Note that the color of the arcs in the crossing is irrelevant. } \label{fig:rules_road}
\end{figure}

\begin{definition} \label{def:trip_perm}
  Let $w$ be a Kreweras word of length $3n$ and let $\D_w$ be its associated Kreweras bump diagram.  We define the \dfn{trip permutation} $\sigma_w \in \Sym_{3n}$ of $w$ as follows. For each $i\in\{1,\ldots,3n\}$, we define  $\sigma_w(i)$ by taking a \dfn{trip in $\D_w$ starting at $i$}, as we now describe. If $i$ is a closer of $\M^B_w\cup\M^C_w$, then we start our trip by walking from~$i$ towards $i'$ along the unique arc $(i',i)$ incident to~$i$; if $i$ is an opener of $\M^B_w\cup\M^C_w$, then we start our trip by walking from $i$ towards $i'$ along the arc $(i,i')$ incident to~$i$ with the smallest value of~$i'$. We continue walking until we encounter a crossing. Whenever we encounter a crossing of arcs $(a,b)$ and $(c,d)$ with $a\leq c < b < d$, we follow the \dfn{rules of the road}: we continue towards
\begin{itemize}
  \item $b$, if coming from $a$;
  \item $a$, if coming from $c$ -- this is a \dfn{left turn};
  \item $d$, if coming from $b$ -- this is a \dfn{right turn}; and
  \item $c$, if coming from $d$.
\end{itemize}
These rules of the road are depicted in \cref{fig:rules_road}. Finally, if $j$ is the terminal vertex of the trip, we set $\sigma_w(i)  \coloneqq  j$.
\end{definition}

\begin{example} \label{ex:trip_permutation}
As in \cref{ex:bump_diagram}, let $w = \A \A \B \B \C \A \C \C \B$.

First let us compute $\sigma_w(1)$. We start by walking along the arc~$(1,4)$ from $1$ towards $4$.  We encounter the crossing $\{(1,4),(2,5)\}$, but following the first rule we continue towards $4$, where we terminate. Thus, $\sigma_w(1)=4$.  This trip looks pictorially as follows:
\begin{center}
\begin{tikzpicture}[scale=0.85]
\tikzset{->-/.style={decoration={ markings, mark=at position 0.6 with {\arrow{>}}},postaction={decorate}}};
\def \x {0.7};
\node at (-1*\x,0) {$\D_w=$};
\node (1) at (1*\x,0) {1};
\node (2) at (2*\x,0) {2};
\node (3) at (3*\x,0) {3};
\node (4) at (4*\x,0) {4};
\node (5) at (5*\x,0) {5};
\node (6) at (6*\x,0) {6};
\node (7) at (7*\x,0) {7};
\node (8) at (8*\x,0) {8};
\node (9) at (9*\x,0) {9};
\node at (1*\x,-0.55*\x) {$\A$};
\node at (2*\x,-0.55*\x) {$\A$};
\node at (3*\x,-0.55*\x) {$\B$};
\node at (4*\x,-0.55*\x) {$\B$};
\node at (5*\x,-0.55*\x) {$\C$};
\node at (6*\x,-0.55*\x) {$\A$};
\node at (7*\x,-0.55*\x) {$\C$};
\node at (8*\x,-0.55*\x) {$\C$};
\node at (9*\x,-0.55*\x) {$\B$};
\draw [thin,blue] (1) to [out=90, in=90] (4);
\draw [thin,blue] (2) to [out=90, in=90] (3);
\draw [thin,blue] (6) to [out=90, in=90] (9);
\draw [thin,red,dashed] (1) to [out=90, in=90] (8);
\draw [thin,red,dashed] (2) to [out=90, in=90] (5);
\draw [thin,red,dashed] (6) to [out=90, in=90] (7);
\draw [ultra thick, ->-] (1) to [out=90, in=90] (4);
\end{tikzpicture}
\end{center}

Next, let us compute $\sigma_w(4)$.  We start by walking along the arc~$(1,4)$ from $4$ towards $1$.  As we encounter the crossing $\{(1,4),(2,5)\}$ we turn right, and continue along the arc $(2,5)$ from $2$ towards $5$, where we terminate.  Thus, $\sigma_w(4)=5$:
\begin{center}
\begin{tikzpicture}[scale=0.85]
\tikzset{->-/.style={decoration={ markings, mark=at position 0.6 with {\arrow{>}}},postaction={decorate}}};
\def \x {0.7};
\node at (-1*\x,0) {$\D_w=$};
\node (1) at (1*\x,0) {1};
\node (2) at (2*\x,0) {2};
\node (3) at (3*\x,0) {3};
\node (4) at (4*\x,0) {4};
\node (5) at (5*\x,0) {5};
\node (6) at (6*\x,0) {6};
\node (7) at (7*\x,0) {7};
\node (8) at (8*\x,0) {8};
\node (9) at (9*\x,0) {9};
\node at (1*\x,-0.55*\x) {$\A$};
\node at (2*\x,-0.55*\x) {$\A$};
\node at (3*\x,-0.55*\x) {$\B$};
\node at (4*\x,-0.55*\x) {$\B$};
\node at (5*\x,-0.55*\x) {$\C$};
\node at (6*\x,-0.55*\x) {$\A$};
\node at (7*\x,-0.55*\x) {$\C$};
\node at (8*\x,-0.55*\x) {$\C$};
\node at (9*\x,-0.55*\x) {$\B$};
\draw [thin,blue] (1) to [out=90, in=90] (4);
\draw [thin,blue] (2) to [out=90, in=90] (3);
\draw [thin,blue] (6) to [out=90, in=90] (9);
\draw [thin,red,dashed] (1) to [out=90, in=90] (8);
\draw [thin,red,dashed] (2) to [out=90, in=90] (5);
\draw [thin,red,dashed] (6) to [out=90, in=90] (7);
\draw [ultra thick, ->-] (4) to [out=90, in=-10]  (3*\x,1.25*\x);
\draw [ultra thick, ->-] (3*\x,1.25*\x) to [out=10, in=90]  (5);
\end{tikzpicture}
\end{center}

Next, let us compute $\sigma_w(3)$. We start by walking along the arc~$(2,3)$ from $3$ towards $2$.  As we encounter the boundary crossing $\{(2,3),(2,5)\}$ we turn right, and continue along the arc $(2,5)$ from $2$ towards~$5$.  Then we encounter the crossing $\{(1,4),(2,5)\}$ turn left, and continue along the arc $(1,4)$ from $4$ towards $1$.  At the boundary crossing $\{(1,4),(1,8)\}$ we turn right, and continue along the arc $(1,8)$ from $1$ towards $8$.  Next we encounter the crossing $\{(1,8),(6,9)\}$, but continue straight along the arc $(1,8)$ from $1$ towards $8$, where we terminate. Thus, $\sigma_w(3) = 8$.
\begin{center}
\begin{tikzpicture}[scale=0.85]
\tikzset{->-/.style={decoration={ markings, mark=at position 0.6 with {\arrow{>}}},postaction={decorate}}};
\def \x {0.7};
\node at (-1*\x,0) {$\D_w=$};
\node (1) at (1*\x,0) {1};
\node (2) at (2*\x,0) {2};
\node (3) at (3*\x,0) {3};
\node (4) at (4*\x,0) {4};
\node (5) at (5*\x,0) {5};
\node (6) at (6*\x,0) {6};
\node (7) at (7*\x,0) {7};
\node (8) at (8*\x,0) {8};
\node (9) at (9*\x,0) {9};
\node at (1*\x,-0.55*\x) {$\A$};
\node at (2*\x,-0.55*\x) {$\A$};
\node at (3*\x,-0.55*\x) {$\B$};
\node at (4*\x,-0.55*\x) {$\B$};
\node at (5*\x,-0.55*\x) {$\C$};
\node at (6*\x,-0.55*\x) {$\A$};
\node at (7*\x,-0.55*\x) {$\C$};
\node at (8*\x,-0.55*\x) {$\C$};
\node at (9*\x,-0.55*\x) {$\B$};
\draw [thin,blue] (1) to [out=90, in=90] (4);
\draw [thin,blue] (2) to [out=90, in=90] (3);
\draw [thin,blue] (6) to [out=90, in=90] (9);
\draw [thin,red,dashed] (1) to [out=90, in=90] (8);
\draw [thin,red,dashed] (2) to [out=90, in=90] (5);
\draw [thin,red,dashed] (6) to [out=90, in=90] (7);
\draw [ultra thick, ->-] (3)  to [out=90, in=90]  (2);
\draw [ultra thick, ->-] (2)  to [out=90, in=190]  (3*\x,1.25*\x);
\draw [ultra thick, ->-] (3*\x,1.25*\x) to [out=170, in=90]  (1);
\draw [ultra thick, ->-] (1) to [out=90, in=90]  (8);
\end{tikzpicture}
\end{center}

Finally, let us compute $\sigma_w(7)$.  We start by walking along the arc~$(6,7)$ from $7$ towards $6$. There we encounter the boundary crossing $\{(6,7),(6,9)\}$ and turn right, so that we continue along the arc $(6,9)$ from~$6$ towards $9$.  As we encounter the crossing $\{(1,8),(6,9)\}$ we turn left, and continue along the arc $(1,8)$ from $8$ towards~$1$.  We encounter the boundary crossing $\{(1,4),(1,8)\}$, and terminate at $1$. Thus, $\sigma_w(7)=1$.
\begin{center}
\begin{tikzpicture}[scale=0.85]
\tikzset{->-/.style={decoration={ markings, mark=at position 0.6 with {\arrow{>}}},postaction={decorate}}};
\def \x {0.7};
\node at (-1*\x,0) {$\D_w=$};
\node (1) at (1*\x,0) {1};
\node (2) at (2*\x,0) {2};
\node (3) at (3*\x,0) {3};
\node (4) at (4*\x,0) {4};
\node (5) at (5*\x,0) {5};
\node (6) at (6*\x,0) {6};
\node (7) at (7*\x,0) {7};
\node (8) at (8*\x,0) {8};
\node (9) at (9*\x,0) {9};
\node at (1*\x,-0.55*\x) {$\A$};
\node at (2*\x,-0.55*\x) {$\A$};
\node at (3*\x,-0.55*\x) {$\B$};
\node at (4*\x,-0.55*\x) {$\B$};
\node at (5*\x,-0.55*\x) {$\C$};
\node at (6*\x,-0.55*\x) {$\A$};
\node at (7*\x,-0.55*\x) {$\C$};
\node at (8*\x,-0.55*\x) {$\C$};
\node at (9*\x,-0.55*\x) {$\B$};
\draw [thin,blue] (1) to [out=90, in=90] (4);
\draw [thin,blue] (2) to [out=90, in=90] (3);
\draw [thin,blue] (6) to [out=90, in=90] (9);
\draw [thin,red,dashed] (7.7*\x,1.3*\x) to [out=-40, in=90] (8);
\draw [thin,red,dashed] (2) to [out=90, in=90] (5);
\draw [thin,red,dashed] (6) to [out=90, in=90] (7);
\draw [ultra thick, ->-] (7)  to [out=90, in=90]  (6);
\draw [ultra thick, ->-] (6)  to [out=90, in=170]  (7.7*\x,1.3*\x) ;
\draw [ultra thick, ->-] (7.7*\x,1.3*\x) to [out=132, in=85]  (1);
\end{tikzpicture}
\end{center}
We could further compute $[\sigma_w(1),\ldots,\sigma_w(9)] = [4,3,8,5,2,7,1,9,6]$.
\end{example}

Next we establish some basic properties of $\sigma_w$.
\begin{prop} \label{prop:basic_perm_props} \,
Let $w$ be a Kreweras word of length $3n$.
\begin{enumerate}[(a)]
\item \label{item:sigma_perm} \Cref{def:trip_perm} yields a permutation $\sigma_w \in \Sym_{3n}$.
\item \label{item:no_double_descents} Let $1 \leq i \leq 3n$ with $w_i=\A$. Then $w_{\sigma(i)} \in \{\B,\C\}$.
\item \label{item:signs_flip} Let $1 \leq i \leq 3n$ with $w_i \in \{\B,\C\}$. Then either $w_{\sigma_w(i)}=-w_i$, or $w_{\sigma_w(i)}=\A$ and  $w_{\sigma_w(\sigma_w(i))}=-w_i$.
\item \label{item:anti_exceedances} We have \begin{align*}
\{i \in [3n]\colon w_i = \A\} &= \{i\in [3n]\colon \sigma_w^{-1}(i) > i\}; \\
\{i \in [3n]\colon \textrm{$w_i = \B$ or $w_i=\C$} \} &= \{i\in [3n]\colon \sigma_w^{-1}(i) < i\}.
\end{align*} In particular, $\sigma_w$ has no fixed points.
\end{enumerate}
\end{prop}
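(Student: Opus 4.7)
For part (a), I would construct a directed graph $\Gamma$ whose vertices are the $3n$ boundary points of the Kreweras bump diagram together with all oriented arc-segments---where an arc-segment is a maximal portion of an arc between consecutive crossings or between a crossing and a boundary endpoint, equipped with a direction of traversal. The out-edges of $\Gamma$ encode the trip procedure: the rules of the road at each internal crossing (\cref{fig:rules_road}(a)), the three-case rule at each boundary crossing at an $\A$-position (\cref{fig:rules_road}(b)), and the starting and terminating conventions at each boundary vertex. A brief case-check confirms that every vertex of $\Gamma$ has in-degree and out-degree both equal to~$1$: each internal crossing permutes its four half-edges bijectively, each boundary crossing at an $\A$-position cyclically permutes its three legs (``boundary'', ``smaller-endpoint arc'', ``larger-endpoint arc''), and each $\B$- or $\C$-position pairs its unique incident arc bijectively with its start and terminate roles. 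Consequently $\Gamma$ is a disjoint union of directed cycles, and restricting to the boundary yields the permutation $\sigma_w$.

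For parts (b), (c), (d), I would track two invariants along each trip: its \emph{color} (blue for an arc of $\M^B_w$, red for an arc of $\M^C_w$) and its \emph{direction} (right if the trip's $x$-coordinate is currently increasing, left if decreasing). Since $\M^B_w$ and $\M^C_w$ are individually noncrossing, every internal crossing is between a blue and a red arc; inspecting each of the four cases in \cref{fig:rules_road}(a) shows that the trip changes color precisely when it changes direction, and the same equivalence holds at each middle-rule transition of \cref{fig:rules_road}(b). Hence along any trip, the numbers of color-changes and direction-changes have equal parity. The starting and terminating rules give the initial direction (right at an $\A$-position and left at a $\B$- or $\C$-position, with final directions swapped) and the starting/ending colors (from the smaller- or larger-endpoint incident arc at $\A$-positions or the unique arc at $\B/\C$-positions); combining this data with the parity invariant yields constraints on the pair $(w_{\sigma_w^{-1}(i)}, w_i)$.

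The remaining ingredient is a planarity claim: any trip from $j$ to $i$ stays weakly on the side of $j$ relative to the vertical line $x = i$, meeting this line only at the trip's terminal boundary point $i$. Granting this, part (d) follows immediately because the sign of $j - i$ equals the sign forced by the trip's final approach direction at $i$ (leftward, hence $j > i$, precisely when $w_i = \A$), and ``no fixed points'' is automatic. Part (b) is completed by ruling out the hypothetical $\A \to \A$ case that survives parity, using the planarity constraint together with the noncrossing structure of $\M^B_w$ and $\M^C_w$ to reach a contradiction. Part (c) follows from (b) applied to any intermediate $\A$-position reached by a trip starting at a $\B$- or $\C$-position. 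The main obstacle will be establishing the planarity claim rigorously: this is expected to require an induction on the number of internal crossings together with careful analysis of the trip's interaction with any arc that strictly straddles~$i$ (i.e., has one endpoint $< i$ and the other $> i$).
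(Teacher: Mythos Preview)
Your sketch for part~(a) is fine and matches the paper's one-line justification (the rules locally permute the directions at each crossing).

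For part~(b), however, you are working much too hard. The key observation you are missing is that a trip starting at an $\A$-position~$i$ \emph{never turns at all}. You begin on the shorter arc $(i,i')$ heading right. Any arc $(k,\ell)$ crossing $(i,i')$ must lie in the opposite matching; it cannot have $k<i$, because then $(k,\ell)$ would also cross the longer arc $(i,i'')$ emanating from~$i$, which lies in the \emph{same} noncrossing matching as $(k,\ell)$. Hence every internal crossing along $(i,i')$ has $i$ as its leftmost endpoint, so you are always ``coming from~$a$'' and continue straight to~$i'$. This gives $\sigma_w(i)=i'$ with $w_{i'}\in\{\B,\C\}$ immediately, and also gives you the precise statement that $\sigma_w(i)$ is the nearer of the two letters matched to~$i$---which you then need for~(c).

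For part~(d), your planarity claim (that the trip stays weakly on the starting side of the terminal vertical line) is morally correct, but your proposed proof by induction on the number of crossings is the wrong organization. The clean argument, and what the paper does, is to record the sequence of arcs $(i_0,j_0),(i_1,j_1),\ldots,(i_\ell,j_\ell)$ traversed by a trip starting at a $\B/\C$-position~$j=j_0$, and observe that $(i_k,j_k)$ \emph{nests} $(i_{k-2},j_{k-2})$ for all $k\ge 2$: i.e., $i_k<i_{k-2}<j_{k-2}<j_k$. This follows directly from the rules of the road (otherwise the crossing with $(i_k,j_k)$ either would not be encountered, or would not be a turn). The nesting chain immediately gives $i_\ell<i_0<j$ when $\ell$ is even and $j<j_1<j_\ell$ when $\ell$ is odd, which is exactly~(d). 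No separate planarity lemma or induction on crossings is needed.

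Your color/direction invariant is correct and is essentially the paper's observation that the turns from a $\B/\C$-start alternate right, left, right,~\ldots\ on alternating-colored arcs; combined with the simple~(b) above, it cleanly yields~(c).
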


\begin{proof}
For \eqref{item:sigma_perm}: this follows from the fact that the rules of the road permute the incoming and outgoing directions locally at each crossing.

For \eqref{item:no_double_descents}: if $w_i=\A$, then the trip in $\D_w$ starting at~$i$ will never turn at a crossing. Hence $\sigma_w(i)$ will be the index of the nearer~$\B$ or~$\C$ that is matched with the $\A$ at $i$.

For \eqref{item:signs_flip}: observe that if $w_i\neq \A$, then the trip in $\D_w$ starting at~$i$ starts heading towards an index~$j$ with $w_j = \A$. Moreover, the sequence of turns such a trip makes is a right turn, followed by a left turn, followed by a right turn, et cetera. Whenever the trip turns right, it heads towards an index $j$ with $w_j \neq \A$, and whenever it turns left, it heads towards a $j$ with $w_j = \A$. If the trip turns an odd number of times in total, it terminates at a $j$ with $w_j\neq\A$, and because only oppositely colored arcs of~$\D_w$ cross, this means in fact $w_j = -w_i$. If the trip turns an even number of times it terminates at a $j$ with $w_j = \A$, and again because only oppositely colored arcs of~$\D_w$ cross, we see from the proof of \eqref{item:no_double_descents} above that $w_{\sigma_w(j)}=-w_i$.

For \eqref{item:anti_exceedances}: we will show that $\sigma_w(j) < j$ if and only if $\sigma_w(j)=\A$, for any $1 \leq j \leq 3n$. If $w_j=\A$ then this is clear from the proof of \eqref{item:no_double_descents} above. So suppose $w_j \neq \A$. If the trip in $\D_w$ starting at $j$ never turns, the claim is also clear. So suppose further the trip starting at $j$ does turn, and suppose the arcs we traverse along the trip are, in order, $(i_0,j_0=j), (i_1,j_1),\ldots, (i_{\ell},j_{\ell})$. Then, first note that $i_0 < j < j_1$. Furthermore, we claim that for all $2\leq k\leq \ell$, the arc $(i_k,j_k)$ \dfn{nests} $(i_{k-2},j_{k-2})$: i.e., $i_k < i_{k-2} < j_{k-2} < j_k$. Indeed, otherwise either we would not encounter a crossing with $(i_k,j_k)$ when traveling along $(i_{k-1},j_{k-1})$ from the crossing with $(i_{k-2},j_{k-2})$, or we would not turn at such a crossing. To finish the proof of the claim, note that if $\ell$ is even then $\sigma_w(j)=i_{\ell}$ and $w_{i_{\ell}}=\A$ (see the proof of~\eqref{item:signs_flip} above), and then the nesting property implies $i_{\ell} < i_0 < j$. Similarly, if $\ell$ is odd then $\sigma_w(j)=j_{\ell}$ and $w_{j_{\ell}}\neq \A$, and then the nesting property implies $j < j_1 < j_{\ell}$.
\end{proof}

The permutation $\sigma_w$ does not quite determine the Kreweras word $w$. For example, if~$w'$ is obtained from $w$ by swapping all $\B$'s for $\C$'s and vice-versa, then clearly we have $\sigma_w = \sigma_{w'}$. So we need to keep track of a little extra data along with $\sigma_w$. To that end, we define the map $\varepsilon_w\colon \{1,\ldots,3n\} \to \{\B, \C\}$ by setting
\[ \varepsilon_w(i)  \coloneqq  \begin{cases} w_{\sigma_w(i)} &\textrm{if $w_{\sigma_w(i)}\neq \A$}; \\ w_{\sigma_w(\sigma_w(i))} &\textrm{if $w_{\sigma_w(i)}=\A$}, \\ \end{cases}\]
for all $1 \leq i \leq 3n$. \cref{prop:basic_perm_props}~\eqref{item:no_double_descents} guarantees that $\varepsilon_w(i)\in \{\B, \C\}$. As a shorthand we write $\varepsilon_w=[\varepsilon_w(1),\ldots,\varepsilon_w(3n)]$. Thanks to \cref{prop:basic_perm_props}~\eqref{item:anti_exceedances}, the pair $(\sigma_w,\varepsilon_w)$ determines $w$:

\begin{cor} \label{cor:perm_epsilon_determine_kword}
For any Kreweras word $w$ of length $3n$,
\[ w_i = \begin{cases} \A &\textrm{if $\sigma_w^{-1}(i) > i$}; \\ \varepsilon_w(\sigma_w^{-1}(i)) &\textrm{otherwise}, \end{cases}\]
for all $1 \leq i \leq 3n$.
\end{cor}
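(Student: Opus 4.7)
The plan is to observe that this corollary is essentially an immediate consequence of \cref{prop:basic_perm_props}\eqref{item:anti_exceedances} combined with the very definition of $\varepsilon_w$; the work has all been done already, and only an unpacking is needed. The main (and only) obstacle is bookkeeping: checking that the cases in the statement line up correctly with the cases in the definition of $\varepsilon_w$.

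First I would dispose of the case $\sigma_w^{-1}(i) > i$. By \cref{prop:basic_perm_props}\eqref{item:anti_exceedances}, this condition on $i$ is equivalent to $w_i = \A$, which is precisely what we want.

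Next I would handle the case $\sigma_w^{-1}(i) \leq i$. Since $\sigma_w$ has no fixed points (again by \cref{prop:basic_perm_props}\eqref{item:anti_exceedances}), we may assume $\sigma_w^{-1}(i) < i$, in which case the same proposition yields $w_i \in \{\B, \C\}$. Set $j \coloneqq \sigma_w^{-1}(i)$, so that $\sigma_w(j) = i$. Then $w_{\sigma_w(j)} = w_i \neq \A$, so the definition of $\varepsilon_w$ collapses to its first clause, giving
\[
\varepsilon_w(j) = w_{\sigma_w(j)} = w_i,
\]
which is exactly the desired equality $w_i = \varepsilon_w(\sigma_w^{-1}(i))$.

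Thus the corollary follows in one line from the previous results; no additional combinatorial input (e.g.\ about the rules of the road) is required beyond what was used to prove \cref{prop:basic_perm_props}.
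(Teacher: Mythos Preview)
Your proposal is correct and matches the paper's approach exactly: the paper states this as an immediate corollary of \cref{prop:basic_perm_props}\eqref{item:anti_exceedances} without giving a separate proof, and your argument is precisely the routine unpacking of that proposition together with the definition of $\varepsilon_w$.
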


We now come to the key lemma in the proof of our main result, which says that~$\sigma_w$ and $\varepsilon_w$ evolve in a simple way under promotion.

\begin{lemma} \label{lem:key}
Let $w$ be a Kreweras word of length $3n$. Then,
\begin{enumerate}[(a)]
\item \label{item:perm_rot} $\sigma_{\pro(w)} = \rot(\sigma_w)$;
\item \label{item:eps_rot} $\varepsilon_{\pro(w)} = [\varepsilon_w(2),\varepsilon_w(3),\ldots,\varepsilon_w(3n), -\varepsilon_w(1)]$.
\end{enumerate}
\end{lemma}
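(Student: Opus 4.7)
The plan is to (i) derive explicit formulas for $\M^\varepsilon_{\pro(w)}$ and $\M^{-\varepsilon}_{\pro(w)}$ in terms of $\M^\varepsilon_w$ and $\M^{-\varepsilon}_w$, where $\iota=\iota(w)$ and $\varepsilon=w_\iota$, (ii) use these descriptions to trace trips and prove (a), and (iii) deduce (b) from (a) and \cref{prop:basic_perm_props} by case analysis.

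For $\M^\varepsilon$, the minimality of $\iota$ forces positions $2,\ldots,\iota-1$ of $w$ to contain equal numbers of $A$'s and $\varepsilon$'s, so every arc of $\M^\varepsilon_w$ other than $(1,\iota)$ lies entirely in $\{2,\ldots,\iota-1\}$ or entirely in $\{\iota+1,\ldots,3n\}$ and simply shifts by $-1$ in $\M^\varepsilon_{\pro(w)}$, while $(1,\iota)$ is replaced by $(\iota-1,3n)$; in other words, $\M^\varepsilon_{\pro(w)}$ is literally the cyclic rotation of $\M^\varepsilon_w$ by one step. For $\M^{-\varepsilon}$ I compare the level functions $L$ and $L'$ of the $(-\varepsilon)$-subwords of $w$ and $\pro(w)$: a short check gives $L'(k)=L(k+1)-1$ for $0\le k\le \iota-2$ and $L'(k)=L(k+1)$ for $\iota-1\le k\le 3n-1$. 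Reading off the Dyck matching from $L'$, arcs of $\M^{-\varepsilon}_w$ not cut by $\iota$ simply shift by $-1$, while the nested chain $(1,j_1)\supset(k_1,\ell_1)\supset\cdots\supset(k_r,\ell_r)$ of $(-\varepsilon)$-arcs cut by $\iota$ becomes $(k_1-1,j_1-1),\,(k_2-1,\ell_1-1),\,\ldots,\,(\iota-1,\ell_r-1)$ in $\M^{-\varepsilon}_{\pro(w)}$: the right endpoints shift by $-1$, while the left endpoints ``cascade'' by one position in the nesting.

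With this explicit description, the crossings of $\D_{\pro(w)}$ are in natural bijection with those of $\D_w$, modulo a local rewiring that moves the boundary-crossing structure from around positions $1$ and $\iota$ of $\D_w$ to around positions $\iota-1$ and $3n$ of $\D_{\pro(w)}$. I then verify directly, starting index by starting index, that following the rules of the road in $\D_{\pro(w)}$ from position $i$ retraces, with every visited position decreased by $1$ modulo $3n$, the trip in $\D_w$ from position $i+1$. The main obstacle is this trip-by-trip comparison: one must check that the cascading chain of re-routed $(-\varepsilon)$-arcs in $\D_{\pro(w)}$, together with the translated $\varepsilon$-arc $(\iota-1,3n)$, routes trips in exactly the way the original cut chain and $(1,\iota)$ do, up to the cyclic shift. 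This amounts to a finite local check of the rules of the road at the boundary and internal crossings inside the rewired region.

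Part (b) follows from (a) and \cref{prop:basic_perm_props}. First note that $\sigma_w(1)=\iota$ in every case: starting at the opener $1$ we leave along $(1,\iota)$ (the shorter arc incident to $1$), and every crossing met by $(1,\iota)$ is entered from $a=1$, so the rules of the road send us straight to $b=\iota$ without turning. Consequently, for $i\in\{1,\ldots,3n-1\}$ part (a) immediately gives $\varepsilon_{\pro(w)}(i)=\varepsilon_w(i+1)$ whenever $\sigma_w(i+1)\notin\{1,\iota\}$; the sub-case $\sigma_w(i+1)=\iota$ is vacuous because $\sigma_w^{-1}(\iota)=1$, and the sub-case $\sigma_w(i+1)=1$ is handled by the $2$-step definition of $\varepsilon_w$ applied at the $A$ at position $1$, combined with \cref{prop:basic_perm_props}\eqref{item:signs_flip}, which yields $\varepsilon_w(i+1)=w_\iota=\varepsilon=w'_{3n}=\varepsilon_{\pro(w)}(i)$. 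Finally, for $i=3n$ part (a) together with $\sigma_w(1)=\iota$ gives $\sigma_{\pro(w)}(3n)=\iota-1$; since $w'_{\iota-1}=A$ the $2$-step definition of $\varepsilon_{\pro(w)}$ is forced, and \cref{prop:basic_perm_props}\eqref{item:signs_flip} applied at the closer $3n$ of $\pro(w)$ (where $w'_{3n}=\varepsilon$) delivers $\varepsilon_{\pro(w)}(3n)=-\varepsilon=-\varepsilon_w(1)$, as claimed.
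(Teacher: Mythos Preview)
Your approach is essentially the paper's: the same explicit description of $\M^{\pm\varepsilon}_{\pro(w)}$ (cyclic rotation for $\M^\varepsilon$, the ``cascading'' rewiring for $\M^{-\varepsilon}$), followed by a local comparison of trips across the rewired region. The paper carries out the ``finite local check'' you leave as an assertion by subdividing each affected arc into initial/middle/final parts and explicitly listing how the middle parts route (via labels $1,s_1,\ldots,s_m,b,t_m,\ldots,t_0$ on the two diagrams), so you should write that out; for~(b) the paper instead tracks the simpler auxiliary $\varepsilon'_w(i)\coloneqq w_i$ if $w_i\neq\A$, else $w_{\sigma_w(i)}$, whose evolution under $\pro$ is transparent, but your direct case analysis via \cref{prop:basic_perm_props} is equally valid---just note that your ``immediately'' when $\sigma_w(i+1)\notin\{1,\iota\}$ tacitly also requires $\sigma_w^2(i+1)\notin\{1,\iota\}$ in the sub-case $w_{\sigma_w(i+1)}=\A$, which follows since $w_{\sigma_w^2(i+1)}\in\{\B,\C\}$ rules out~$1$ and $\sigma_w^2(i+1)=\iota$ would force $\sigma_w(i+1)=\sigma_w^{-1}(\iota)=1$.
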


Before we prove \cref{lem:key}, let's see an example.

\begin{example} \label{ex:key_lem}
As in \cref{ex:trip_permutation}, let $w = \A \A \B \B \C \A \C \C \B$. We saw above that $\sigma_w = [4,3,8,5,2,7,1,9,6]$. We also have $\varepsilon_w=[\B, \B, \C, \C, \B, \C, \B, \B, \C]$.

As we saw in \cref{ex:kword_pro}, $\pro(w) = \A \B \A \C \A \C \C \B \B$. Its associated bump diagram is
\begin{center}
\begin{tikzpicture}[scale=0.85]
\def \x {0.7};
\node at (-1*\x,0) {$\D_{\pro(w)}=$};
\node (1) at (1*\x,0) {1};
\node (2) at (2*\x,0) {2};
\node (3) at (3*\x,0) {3};
\node (4) at (4*\x,0) {4};
\node (5) at (5*\x,0) {5};
\node (6) at (6*\x,0) {6};
\node (7) at (7*\x,0) {7};
\node (8) at (8*\x,0) {8};
\node (9) at (9*\x,0) {9};
\node at (1*\x,-0.55*\x) {$\A$};
\node at (2*\x,-0.55*\x) {$\B$};
\node at (3*\x,-0.55*\x) {$\A$};
\node at (4*\x,-0.55*\x) {$\C$};
\node at (5*\x,-0.55*\x) {$\A$};
\node at (6*\x,-0.55*\x) {$\C$};
\node at (7*\x,-0.55*\x) {$\C$};
\node at (8*\x,-0.55*\x) {$\B$};
\node at (9*\x,-0.55*\x) {$\B$};
\draw [ultra thick,blue] (1) to [out=90, in=90] (2);
\draw [ultra thick,blue] (3) to [out=90, in=90] (9);
\draw [ultra thick,blue] (5) to [out=90, in=90] (8);
\draw [ultra thick,red,dashed] (1) to [out=90, in=90] (7);
\draw [ultra thick,red,dashed] (3) to [out=90, in=90] (4);
\draw [ultra thick,red,dashed] (5) to [out=90, in=90] (6);
\end{tikzpicture}
\end{center}
From the diagram $\D_{\pro(w)}$ one could compute that $\sigma_{\pro(w)}=[2, 7, 4, 1, 6, 9, 8, 5, 3]$ and $\varepsilon_{\pro(w)}=[\B, \C, \C, \B, \C, \B, \B, \C, \C]$, in agreement with \cref{lem:key}.
\end{example}

We proceed to prove \cref{lem:key}.

\begin{proof}[Proof of \cref{lem:key}]
The key to the proof is the following procedure to obtain the Kreweras bump diagram $\D_{\pro(w)}$ from $\D_w$.

\begin{figure}
\begin{center}
 \begin{tikzpicture}[scale=0.75]
 \node (1) at (0,0) {$1$};
 \node (2) at (2,0) {$i_1$};
 \node (3) at (3,0) {$\ldots$};
 \node (4) at (4,0) {$i_m$};
 \node (5) at (5,0) {$b$};
 \node (6) at (6,0) {$j_m$};
 \node (7) at (7,0) {$\ldots$};
 \node (8) at (8,0) {$j_1$};
 \node (9) at (10,0) {$c$};
 \node (10) at (12,0) {$3n$};
 \draw [ultra thick,blue] (1) to [out=90, in=90] (5);
 \draw [ultra thick,red,dashed] (1) to [out=90, in=90] (9);
 \draw [ultra thick,red,dashed] (2) to [out=90, in=90] (8);
 \draw [ultra thick,red,dashed] (4) to [out=90, in=90] (6);
 \draw [ultra thick] (5.15,0.25) to [out=80, in=-40] (4.95,0.85);
 \draw [ultra thick,->] (4.95,0.85) to [out=0, in=150] (5.6,0.65);
 \draw [ultra thick] (4.25,1.5) to [out=140, in=0] (3.35,1.8);
 \draw [ultra thick,->] (3.35,1.8) to [out=20, in=180] (4.4,1.9);
 \draw [ultra thick] (1.4,1.75) to [out=180, in=40] (0.5,1.4);
 \draw [ultra thick,->] (0.5,1.4) to [out=60, in=200] (1.4,2.25);
 \draw [ultra thick] (3.8,0.45) to [out=70, in=180] (4.55,1.05);
 \draw [ultra thick,->] (4.55,1.05) to [out=180-35, in=-15] (3.9,1.3);
 \draw [ultra thick] (1.8,0.65) to [out=70, in=200] (2.525,1.6);
 \draw [ultra thick,->] (2.5,1.6) to [out=170, in=10] (1.6,1.55);
 \node at (14,2) {$\D_w$};
 \end{tikzpicture} \\ \hrulefill \\
 \begin{tikzpicture}[scale=0.75]
 \node (1) at (0,0) {$\;$};
 \node (2) at (2,0) {$i_1$};
 \node (3) at (3,0) {$\ldots$};
 \node (4) at (4,0) {$i_m$};
 \node (5) at (5,0) {$b$};
 \node (6) at (6,0) {$j_m$};
 \node (7) at (7,0) {$\ldots$};
 \node (8) at (8,0) {$j_1$};
 \node (9) at (10,0) {$c$};
 \node (10) at (12,0) {$3n$};
 \node (11) at (13,0) {$3n{+}1$};
 \draw [ultra thick,blue] (5) to [out=90, in=90] (11);
 \draw [ultra thick,red,dashed] (5) to [out=90, in=70] (6);
 \draw [ultra thick,red,dashed] (2) to [out=90, in=90] (9);
 \draw [ultra thick,red,dashed] (4) to [out=90, in=90] (8);
 \draw [ultra thick] (5.75,0.85) to [out=180,in=70] (5.2,0.7);
 \draw [ultra thick,->] (5.2,0.7) to [out=80, in=270-30] (5.65,1.4);
 \draw [ultra thick] (7.0,1.6) to [out=160,in=10] (5.9,1.7);
 \draw [ultra thick,->] (5.9,1.7) to [out=40,in=200] (7.0,2.2);
 \draw [ultra thick] (9.5,1.9) to [out=130,in=-20] (8.5,2.5);
 \draw [ultra thick,->] (8.5,2.5) to [out=10,in=180] (9.75,2.5);
 \draw [ultra thick] (4.9,0.7) to [out=80,in=220] (5.25,1.4);
 \draw [ultra thick,->] (5.25,1.4) to [out=210,in=60] (4.4,0.7);
 \draw [ultra thick] (6.25,2.2) to [out=30,in=200] (7.2,2.55);
 \draw [ultra thick,->] (7.2,2.55) to [out=170,in=0] (6.0,2.55);
 \node at (14,2) {$\widetilde{\N}^\B \cup \widetilde{\N}^\C$};
 \end{tikzpicture}
\end{center}
\caption{Figure illustrating the proof of \cref{lem:key}.} \label{fig:key_lem}
\end{figure}

Let $(1,b)\in\M^\B_w$ and $(1,c)\in\M^\C_w$, and suppose that~$b < c$ without loss of generality. Let $\widetilde{\N}^\B  \coloneqq  (\M^\B_w\setminus\{(1,b)\} )\cup \{(b, 3n+1)\}$.  This is a noncrossing matching, because $i < b < j < 3n+1$ would imply $1 < i < b < j$. Furthermore, let
\[ \widetilde{\N}^\C  \coloneqq  (\M^\C_w \setminus\{(1,c), (i_1,j_1),\dots,(i_m,j_m)\}) \cup \{(i_1,c),(i_2,j_1),\dots,(i_m,j_{m-1}),(b,j_m)\},\]
where $\{(i_1,j_1),\dots,(i_m,j_m)\}$ is the set of arcs in $\M^\C_w$ with
\[i_1 < \dots < i_m < b < j_m < \dots < j_1 < c,\]
that is, the set of arcs which cross $(1,b)$.  $\widetilde{\N}^\C$ is a noncrossing matching: for example, suppose that an arc  $(i, j)$ satisfies $i < i_\ell < j < j_{\ell-1}$; then we have in fact $j < b$ because $(i,j)$ cannot cross $(1,b)$.  The other cases are dealt with similarly.

Let $\varepsilon\in\{\B, \C\}$ and let $\N^\varepsilon$ be obtained from $\widetilde{\N}^\varepsilon$ by replacing every arc $(i,j)$ with $(i-1, j-1)$.  Then $\N^\varepsilon$  is a noncrossing matching of~$\{i\in[3n]\colon \pro(w)_i \neq \varepsilon\}$ with set of openers~$\{i\in [3n]\colon \pro(w)_i=\textrm{$\A$}\}$ and set of closers~$\{i\in [3n]\colon \pro(w)_i=\varepsilon\}$.  Since the set of openers and the set of closers uniquely determine a noncrossing matching, $\D_{\pro(w)} = \N^\B\cup\N^\C$.

We define the \dfn{trip permutation} of $\widetilde{\N}^\B\cup\widetilde{\N}^\C$ in the obvious way, by taking trips starting at $2 \leq i \leq 3n+1$ and using the rules of the road. We now show that $\sigma_w$ coincides with the trip permutation of $\widetilde{\N}^\B\cup\widetilde{\N}^\C$, provided we identify $1$ and $3n+1$.  To do so, we subdivide every arc of $\D_w$ crossing $(1,b)$ into an initial, a middle and a final part, such that the middle part contains the crossing with $(1,b)$ and no other crossings.  Additionally, slightly abusing language, we say that the arc $(1,b)$ only has a middle part and $(1,c)$ consists only of a middle part (containing only the crossing with $(1,b)$) and a final part.

Similarly, we subdivide every arc of $\widetilde{\N}^\B\cup\widetilde{\N}^\C$ in $\{(i_1,c),(i_2,j_1),\dots,(i_m,j_{m-1})\}$ into an initial, a middle and a final part, such that the middle part contains the crossing with $(b,3n+1)$ and no other crossings.  Additionally, we say that $(b,j_m)$ consists only of a middle part (containing only the crossing with $(b,3n+1)$) and a final part, and $(b,3n+1)$ only has a middle part.

We now note that the initial and the final parts of the arcs in $\D_w$ and $\widetilde{\N}^\B\cup\widetilde{\N}^\C$ are identical. It therefore suffices to check that the portions of a trip proceeding in a middle part also begin and end at the same arcs in $\D_w$ and $\widetilde{\N}^\B\cup\widetilde{\N}^\C$ (provided we identify $1$ and $3n+1$). Labeling the beginnings of the middle parts in~$\D_w$ from left to right with $1, s_1,\dots, s_m$ and their endings with $b, t_m,\dots, t_1, t_0$ and the beginnings of the middle parts in $\widetilde{\N}^\B\cup\widetilde{\N}^\C$ from left to right with $s_1,\dots, s_m, b$ and their endings with $t_m,\dots, t_1, t_0, 1=3n+1$, we find that in both cases the trip connects these as follows:
  \begin{gather*}
    1\to b, b\to t_m, t_0\to 1,\\
    s_k\to t_{k-1} \textrm{ and } t_k\to s_k\text{ for all $k \geq 1$}.
  \end{gather*}
This is depicted in \cref{fig:key_lem}.

This proves~\eqref{item:perm_rot}. For~\eqref{item:eps_rot}: define $\varepsilon'_w = [\varepsilon'_w(1),\ldots,\varepsilon'_w(3n)]$ by
\[ \varepsilon'_w(i)  \coloneqq  \begin{cases} w_{i} &\textrm{if $w_{i}\neq \A$}; \\ w_{\sigma_w(i)} &\textrm{if $w_{i}=\A$}, \\ \end{cases}\]
Recall (see the proof of \cref{prop:basic_perm_props}\eqref{item:no_double_descents}) that for $i$ with $w_i=\A$, $\sigma_w(i)$ is just the index of the nearer $\B$ or $\C$ that is matched with the $\A$ at $i$. Hence
\[ \varepsilon'_{\pro(w)} = [\varepsilon'_2(w), \varepsilon'_3(w), \ldots, \varepsilon'_{\iota(w)-1}(w), -\varepsilon'_{\iota(w)}(w),\varepsilon'_{\iota(w)+1}(w),\ldots,\varepsilon'_{3n}(w),\varepsilon'_{1}(w)] \]
Together with~\eqref{item:perm_rot}, this observation proves~\eqref{item:eps_rot}.
\end{proof}

\Cref{thm:main} follows easily from \cref{lem:key}:

\begin{proof}[Proof of \cref{thm:main}]
\Cref{lem:key} says $\varepsilon_{\pro^{3n}(w)} = [-\varepsilon_{w}(1),-\varepsilon_{w}(2),\ldots,-\varepsilon_{w}(3n)]$ and $\sigma_{\pro^{3n}(w)} = \sigma_w$. Thus \cref{cor:perm_epsilon_determine_kword} implies that $\pro^{3n}(w)$ is obtained from $w$ by swapping all $\B$'s for $\C$'s and vice-versa, as claimed. \end{proof}

\section{Evacuation of Kreweras words} \label{sec:evac}

Via the bijection between Kreweras words of length $3n$ and linear extensions of~$V(n)$ described in \cref{prop:kword_poset_pro}, we can also view evacuation as acting on the set of Kreweras words. In this section we will describe the evacuation of a Kreweras word, using some of the machinery from \cref{sec:proof}.  As we will see, just as with promotion, evacuation has a very simple effect on $\sigma_w$ and $\varepsilon_w$.

In order to study evacuation of Kreweras words, we will employ another formulation of promotion and evacuation of linear extensions in terms of \dfn{growth diagrams}. This approach is discussed in~\cite[\S 5]{stanley2009promotion}; it is essentially due to Fomin (see, e.g.,~\cite[Chapter~7: Appendix 1]{stanley1999ec2}).

Let $P$ be a poset with $\ell$ elements. An \dfn{order ideal} of $P$ is a subset $I\subseteq P$ that is downwards-closed, i.e., for which $q \in I$ and $p \leq q \in P$ implies that~$p \in I$. The set of order ideals of $P$ is denoted $\mathcal{J}(P)$. A linear extension $(p_1,p_2,\ldots,p_{\ell}) \in \mathcal{L}(P)$ corresponds to a chain $I_0 \subset I_1 \subset \cdots \subset I_{\ell} \in \mathcal{J}(P)$ of order ideals of length $\ell$, where we set~$I_{i}  \coloneqq  \{p_1,p_2,\ldots,p_i\}$ for $0 \leq i \leq \ell$. This sets up a (well-known) bijection between linear extensions of $P$ and maximal chains of order ideals of $P$.

\begin{definition} \label{def:growth_diagram}
Let $L \in \mathcal{L}(P)$ be a linear extension. The \dfn{growth diagram} of $L$ is a labeling of the subset $D  \coloneqq  \{(x,y) \in \mathbb{Z}^2\colon -y-\ell \leq x \leq -y\}$ of the two-dimensional grid $\mathbb{Z}^2$ by order ideals $I_{(x,y)} \in \mathcal{J}(P)$, $(x,y)\in D$, subject to the following conditions:
\begin{itemize}
\item $I_{(-\ell+k,-k)}=\varnothing$ and $I_{(k,-k)}=P$ for all $k\in \mathbb{Z}$;
\item $I_{(-\ell,0)} \subset I_{(-\ell+1,0)} \subset \cdots \subset I_{(0,0)}$ is the chain corresponding to $L$;
\item for any four points $(x,y), (x,y+1), (x+1,y), (x+1,y+1) \in D$, the labeling obeys the following \dfn{local rule}: if $I_{(x,y)} = I$, $I_{(x,y+1)} = I\cup \{p\}$, and $I_{(x+1,y+1)} = I\cup \{p,q\}$, then
\[ I_{(x+1,y)} = \begin{cases}  I\cup\{q\}  &\textrm{if $p$ and $q$ are incomparable in $P$}; \\ \ I\cup \{p\} &\textrm{if $p < q$ in $P$}. \end{cases}\]
This rule is depicted in \cref{fig:growth_diagram_rule}.
\end{itemize}
\end{definition}

\begin{figure}
\begin{center}
\begin{tikzpicture}[scale=0.5]
\node[shape=circle,fill=black,inner sep=1.5,label={left:$I$}] (1) at (0,0) {};
\node[shape=circle,fill=black,inner sep=1.5,label={left:$I\cup\{p\}$}] (2) at (0,3) {};
\node[shape=circle,fill=black,inner sep=1.5,label={right:$I\cup\{p,q\}$}] (3) at (3,3) {};
\node[shape=circle,fill=black,inner sep=1.5,label={right:$\begin{cases} I\cup\{q\} &\textrm{if $p \mid\mid q$}; \\ I\cup\{p\} &\textrm{if $p < q$}. \end{cases}$}] (4) at (3,0) {};
\draw[thick] (1)--(2)--(3)--(4)--(1);
\end{tikzpicture}
\end{center}
\caption{The local rule for growth diagrams of linear extensions.} \label{fig:growth_diagram_rule}
\end{figure}
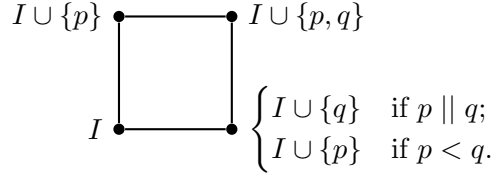

In the following proposition we summarize the basic results about growth diagrams of linear extensions. The essential idea is that all paths of length~$\ell$ from a point of the form $(-\ell+k,-k)$ to a point of the form $(j,-j)$ correspond to linear extensions, and the local rule in \cref{fig:growth_diagram_rule} reflects the behavior of the involutions $\tau_i$ from \cref{sec:promotion}. See~\cite[\S 5]{stanley2009promotion} for the details and references.
\begin{prop} \label{prop:growth_diagram_basics}
For any $L \in \mathcal{L}(P)$,
\begin{itemize}
\item the growth diagram of $L$ is well-defined, i.e., there is a unique order ideal labeling $I_{(x,y)}\in \mathcal{J}(P)$, $(x,y)\in D$ satisfying the conditions in \cref{def:growth_diagram};
\item for any $k \in \mathbb{Z}$, the chain $I_{(-\ell+k,-k)} \subset I_{(-\ell+k+1,-k)} \subset \cdots \subset I_{(k,-k)}$ corresponds to~$\pro^{k}(L)$;
\item the chain $I_{(0,-\ell)} \subset I_{(0,-\ell+1)} \subset \cdots \subset I_{(0,0)}$ corresponds to~$\evac(L)$.
\end{itemize}
\end{prop}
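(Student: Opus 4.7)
This proposition compiles classical results from Fomin's theory of growth diagrams, so my plan is to reduce each of the three claims to a direct analysis of the local rule.

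For well-definedness, the local rule is by definition a deterministic function $(I_{(x,y)}, I_{(x,y+1)}, I_{(x+1,y+1)}) \mapsto I_{(x+1,y)}$, so starting from the given row $y=0$ one can fill each row below it from left to right, using the pinned leftmost entry $\varnothing$ supplied by the lower antidiagonal; since each row's chain has length $\ell$ inside $[\varnothing,P]$, consistency with the rightmost pinned entry $P$ is automatic. To extend upward, I would check that the local rule admits a deterministic inverse: given $I_{(x,y)}, I_{(x+1,y)}, I_{(x+1,y+1)}$, a case analysis on whether the two elements of $I_{(x+1,y+1)} \setminus I_{(x,y)}$ are comparable in $P$ pins down $I_{(x,y+1)}$ uniquely (the $p<q$ case forces $I_{(x,y+1)} = I_{(x+1,y)}$, and the incomparable case forces the swap). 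Iterating from the pinned rightmost entry $P$ then fills the diagram above $y=0$ as well.

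For the promotion claim, the content is that passing from row $y$ to row $y-1$ amounts to applying $\pro = \tau_{\ell-1} \circ \cdots \circ \tau_1$ once. Writing row $y$ as $(p_1,\ldots,p_\ell)$ and row $y-1$ as $(q_1,\ldots,q_\ell)$, I would induct on $i$ to show that after the local rule has been applied to the $i$th square from the left, the entries $q_1,\ldots,q_i$ agree with the first $i$ entries of $(\tau_{i-1}\circ \cdots \circ \tau_1)(p_1,\ldots,p_\ell)$. The inductive step is immediate because the local rule's case split on comparability of the two newly available elements exactly reproduces the swap-or-keep behavior of $\tau_i$ from \cref{sec:promotion}. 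Iteration across rows then gives the claim for positive~$k$, while the inverse local rule encodes $\pro^{-1}$ and delivers negative~$k$.

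For the evacuation claim, I would compare the column $x=0$ with Sch\"utzenberger's decomposition $\evac = \tau_1 \circ (\tau_2 \tau_1) \circ \cdots \circ (\tau_{\ell-1} \circ \cdots \circ \tau_1)$, proving by induction that the element added going from $I_{(0, -k)}$ to $I_{(0, -k+1)}$ is the $(\ell - k + 1)$-st entry of $\evac(L)$. The argument is an analogue of the local-rule analysis from the promotion claim, but now run across progressively shorter diagonal sub-strips of the growth diagram, mirroring the progressively shorter compositions $(\tau_i \circ \cdots \circ \tau_1)$ in Sch\"utzenberger's product. The main obstacle throughout---and especially for the evacuation claim---is the indexing bookkeeping; no new combinatorial ideas are required beyond the local rule itself, and a fully detailed treatment appears in~\cite[\S 5]{stanley2009promotion}.
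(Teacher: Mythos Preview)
The paper does not actually prove this proposition: it states the essential idea (``the local rule in \cref{fig:growth_diagram_rule} reflects the behavior of the involutions $\tau_i$'') and refers to \cite[\S 5]{stanley2009promotion} for details. Your sketch follows exactly this approach and in fact supplies more detail than the paper does, so the approaches agree.

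One small correction to your promotion induction: after processing the $i$th square from the left, the entries $q_1,\ldots,q_i$ of the lower row agree with the first $i$ entries of $(\tau_i \circ \cdots \circ \tau_1)(p_1,\ldots,p_\ell)$, not $(\tau_{i-1}\circ \cdots \circ \tau_1)(p_1,\ldots,p_\ell)$. You can see this already at $i=1$: the first square has $I=\varnothing$, $I\cup\{p\}=\{p_1\}$, $I\cup\{p,q\}=\{p_1,p_2\}$, and the local rule outputs $\{p_2\}$ or $\{p_1\}$ according to whether $p_1,p_2$ are incomparable or comparable---which is exactly the first entry of $\tau_1(L)$, not of $L$. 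This is precisely the ``indexing bookkeeping'' you flag, and it does not affect the soundness of the argument.
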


\begin{example}
Let $P$ be the following three-element poset:
\begin{center}
\begin{tikzpicture}[scale=1.1]
\node[shape=circle,fill=black,inner sep=1.5,label={left:$p$}] (1) at (0,0) {};
\node[shape=circle,fill=black,inner sep=1.5,label={left:$q$}] (2) at (0,1) {};
\node[shape=circle,fill=black,inner sep=1.5,label={right:$r$}] (3) at (0.5,0.5) {};
\draw[thick] (1)--(2);
\end{tikzpicture}
\end{center}
Consider the linear extension $L=(p,q,r)\in \mathcal{L}(P)$. Then, writing subsets as strings for shorthand, the portion of the growth diagram of $L$ with $y$ coordinate between~$0$ and~$-3$ is
\begin{center}
\begin{tikzpicture}[scale=1.2]
\node at (0.6,0.6) {$\ddots$};
\node[shape=circle,fill=black,inner sep=1.5,label={below left:$\varnothing$}] (A1) at (0,0) {};
\node[shape=circle,fill=black,inner sep=1.5,label={below left:$p$}] (A2) at (1,0) {};
\node[shape=circle,fill=black,inner sep=1.5,label={below left:$pq$}] (A3) at (2,0) {};
\node[shape=circle,fill=black,inner sep=1.5,label={below left:$pqr$}] (A4) at (3,0) {};
\node[shape=circle,fill=black,inner sep=1.5,label={below left:$\varnothing$}] (B1) at (1,-1) {};
\node[shape=circle,fill=black,inner sep=1.5,label={below left:$p$}] (B2) at (2,-1) {};
\node[shape=circle,fill=black,inner sep=1.5,label={below left:$pr$}] (B3) at (3,-1) {};
\node[shape=circle,fill=black,inner sep=1.5,label={below left:$pqr$}] (B4) at (4,-1) {};
\node[shape=circle,fill=black,inner sep=1.5,label={below left:$\varnothing$}] (C1) at (2,-2) {};
\node[shape=circle,fill=black,inner sep=1.5,label={below left:$r$}] (C2) at (3,-2) {};
\node[shape=circle,fill=black,inner sep=1.5,label={below left:$pr$}] (C3) at (4,-2) {};
\node[shape=circle,fill=black,inner sep=1.5,label={below left:$pqr$}] (C4) at (5,-2) {};
\node[shape=circle,fill=black,inner sep=1.5,label={below left:$\varnothing$}] (D1) at (3,-3) {};
\node[shape=circle,fill=black,inner sep=1.5,label={below left:$p$}] (D2) at (4,-3) {};
\node[shape=circle,fill=black,inner sep=1.5,label={below left:$pq$}] (D3) at (5,-3) {};
\node[shape=circle,fill=black,inner sep=1.5,label={below left:$pqr$}] (D4) at (6,-3) {};
\node at (4.6,-3.6) {$\ddots$};
\draw[thick] (A1)--(A2)--(A3)--(A4);
\draw[thick] (B1)--(B2)--(B3)--(B4);
\draw[thick] (C1)--(C2)--(C3)--(C4);
\draw[thick] (D1)--(D2)--(D3)--(D4);
\draw[thick] (A2)--(B1);
\draw[thick] (A3)--(B2)--(C1);
\draw[thick] (A4)--(B3)--(C2)--(D1);
\draw[thick] (B4)--(C3)--(D2);
\draw[thick] (C4)--(D3);
\end{tikzpicture}
\end{center}
From this diagram we can read off $\pro^3(L)=L$ and $\evac(L) = (r,p,q)$.
\end{example}

\Cref{prop:growth_diagram_basics} implies that simple geometric operations on growth diagrams have combinatorial meaning:

\begin{cor} \label{cor:growth_diagram_symmetry}
For any $L \in \mathcal{L}(P)$,
\begin{enumerate}[(a)]
\item \label{item:trans_symmetry} for any $k \in \mathbb{Z}$, the growth diagram for $\pro^{k}(L)$ is obtained from the growth diagram for $L$ by translating by the vector $(-k,k)$;
\item \label{item:xy_symmetry} the growth diagram for $\evac(L)$ is obtained from the growth diagram for~$L$ by reflecting across the line $y=x$.
\end{enumerate}
\end{cor}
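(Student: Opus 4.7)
The plan is to invoke the uniqueness of the growth diagram guaranteed by the first bullet of \cref{prop:growth_diagram_basics}. For each of the two geometric operations---translation by $(-k,k)$ in part (a) and reflection across $y=x$ in part (b)---the strategy is to apply the operation to the growth diagram of $L$, verify that the result still satisfies the three defining conditions of a growth diagram from \cref{def:growth_diagram}, and then identify the corresponding linear extension by reading off the base row at $y=0$ via the second and third bullets of \cref{prop:growth_diagram_basics}.

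For part (a), all three defining conditions are manifestly invariant under translation by $(-k,k)$: the domain $D=\{(x,y)\colon -y-\ell\leq x\leq -y\}$ is cut out by inequalities on $x+y$ and is hence unchanged; the two boundary conditions are already quantified over all $k\in\mathbb{Z}$; and the local rule depends only on the relative positions of the four corners of a unit square. The translated diagram is therefore itself a valid growth diagram, and its row at $y=0$ coincides with the row of the original diagram at $y=-k$, which encodes $\pro^{k}(L)$.

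For part (b), the domain $D$ and the two boundary conditions are also symmetric under $(x,y)\mapsto(y,x)$, since the NW boundary $\{x+y=-\ell\}$ and the SE boundary $\{x+y=0\}$ are each mapped to themselves. The crux is the local rule. The reflection swaps the NW and SE corners of each unit square while fixing SW and NE. I therefore need to check that if an original square reads $I$ at SW, $I\cup\{p\}$ at NW, $I\cup\{p,q\}$ at NE, and SE as determined by \cref{def:growth_diagram}, then the reflected square (with NW and SE values exchanged) still obeys the local rule. A short case analysis suffices: if $p\parallel q$, then the reflected square reads $I,\ I\cup\{q\},\ I\cup\{p,q\},\ I\cup\{p\}$ (going SW, NW, NE, SE), which is precisely the local rule applied with the roles of $p$ and $q$ exchanged; if $p<q$, then NW and SE of the original are both $I\cup\{p\}$, so the reflected square coincides with the original and there is nothing to check.

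Once the local rule is verified, the reflected labeling is a valid growth diagram. Its row at $y=0$ is the vertical chain $I_{(0,-\ell)}\subset\cdots\subset I_{(0,0)}$ of the original, which by the third bullet of \cref{prop:growth_diagram_basics} encodes $\evac(L)$; uniqueness then concludes the argument. The only step requiring any genuine work is the case analysis for the local rule in part (b), and even that is short because the local rule has only two cases.
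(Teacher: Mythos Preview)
Your proof is correct and follows the same approach as the paper's. The paper's proof is much terser---it simply says both parts are ``immediate consequence[s]'' of \cref{prop:growth_diagram_basics}, noting for part~(b) only that ``the local rule in \cref{fig:growth_diagram_rule} is symmetric under swapping $x$- and $y$-coordinates''---whereas you spell out explicitly the uniqueness-plus-verification structure and carry out the two-case check of the local rule's symmetry that the paper leaves as an observation.
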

\begin{proof}
The first bulleted item is an immediate consequence of \cref{prop:growth_diagram_basics}. The second is also an immediate consequence of \cref{prop:growth_diagram_basics}, as soon as one observes that the local rule in \cref{fig:growth_diagram_rule} is symmetric under swapping $x$- and $y$-coordinates.
\end{proof}

Via the geometric operations described in \cref{cor:growth_diagram_symmetry}, the basic properties concerning promotion and evacuation summarized in \cref{prop:pro_evac_basics} are easily obtained via this growth diagram approach.

Now let's think about what growth diagrams look like for our poset of interest, $V(n)$. There is an obvious bijection
\[\mathcal{J}(V(n)) \simeq J(n)  \coloneqq  \{(a,b,c)\in \mathbb{N}^3\colon b,c\leq a \leq n\},\]
which sends $I$ to $(a,b,c)$ where $a= \max\{i \in \mathbb{N}\colon \{\A_1,\A_2,\ldots,\A_i\}\subseteq I\}$ and similarly for $b$ and $c$. We consider growth diagrams for linear extensions of~$V(n)$ labeled by elements of $J(n)$ via this bijection. The local rule then becomes:  if $I_{(x,y)} = (a,b,c)$, $I_{(x,y+1)} = (a,b,c)+e_i$, and $I_{(x+1,y+1)} = (a,b,c)+e_i+e_j$, then
\[ I_{(x+1,y)} = \begin{cases} (a,b,c)+e_j &\textrm{if $(a,b,c)+e_j \in J(n)$}; \\ (a,b,c)+e_i  &\textrm{otherwise}. \end{cases}\]
Here we have $i,j \in \{\A,\B,\C\}$, and we use the conventions that $e_{\A}  \coloneqq  (1,0,0)$, $e_{\B}  \coloneqq  (0,1,0)$, and $e_{\C}  \coloneqq  (0,0,1)$. This is depicted in \cref{fig:growth_diagram_rule_vn}.

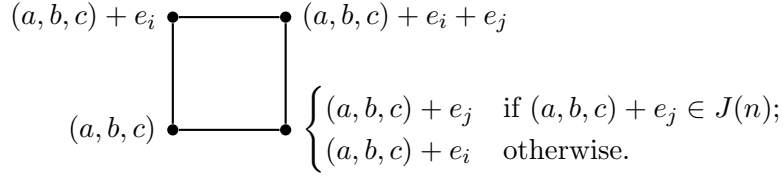
\begin{figure}
\begin{center}
\begin{tikzpicture}[scale=0.5]
\node[shape=circle,fill=black,inner sep=1.5,label={left:$(a,b,c)$}] (1) at (0,0) {};
\node[shape=circle,fill=black,inner sep=1.5,label={left:$(a,b,c)+e_i$}] (2) at (0,3) {};
\node[shape=circle,fill=black,inner sep=1.5,label={right:$(a,b,c)+e_i+e_j$}] (3) at (3,3) {};
\node[shape=circle,fill=black,inner sep=1.5,label={right:$\begin{cases}(a,b,c)+e_j &\textrm{if $(a,b,c)+e_j \in J(n)$}; \\ (a,b,c)+e_i &\textrm{otherwise}. \end{cases}$}] (4) at (3,0) {};
\draw[thick] (1)--(2)--(3)--(4)--(1);
\end{tikzpicture}
\end{center}
\caption{The local rule for growth diagrams of Kreweras words.} \label{fig:growth_diagram_rule_vn}
\end{figure}

Let us further decorate the growth diagram of a linear extension of $V(n)$ in the following way.  We refer to $(x,y), (x,y+1), (x+1,y), (x+1,y+1)$ as the \dfn{square in position $(x,y)$}. If in a growth diagram of a linear extension of~$V(n)$ these four points constitute an ``otherwise'' case in \cref{fig:growth_diagram_rule_vn}, then we \dfn{fill} this square with $j \in \{\B,\C\}$, where $e_j$ is as in that figure.

Via the bijection of \cref{prop:kword_poset_pro}, linear extensions of $V(n)$ are the same as Kreweras words of length $3n$. In this way, for such a Kreweras word~$w$ we obtain a labeling of~$\{(x,y) \in \mathbb{Z}^2\colon -y-3n \leq x \leq -y\}$ by~$J(n) = \{(a,b,c)\in \mathbb{N}^3\colon b,c\leq a \leq n\}$, which furthermore has some of its squares filled with $\B$'s and $\C$'s. We call this whole object the \dfn{decorated growth diagram} of $w$.

From now on in this section we will work with decorated growth diagrams of Kreweras words of length $3n$.

\begin{example} \label{ex:kword_growth_diagram}
As in \cref{ex:key_lem}, let $w = \A \A \B \B \C \A \C \C \B$. Then \cref{fig:kword_growth_diagram_ex} depicts the portion of the decorated growth diagram for $w$ with $y$ coordinate between~$0$ and~$-9$. In this figure we use the string $abc$ as shorthand for $(a,b,c) \in J(n)$.
\end{example}

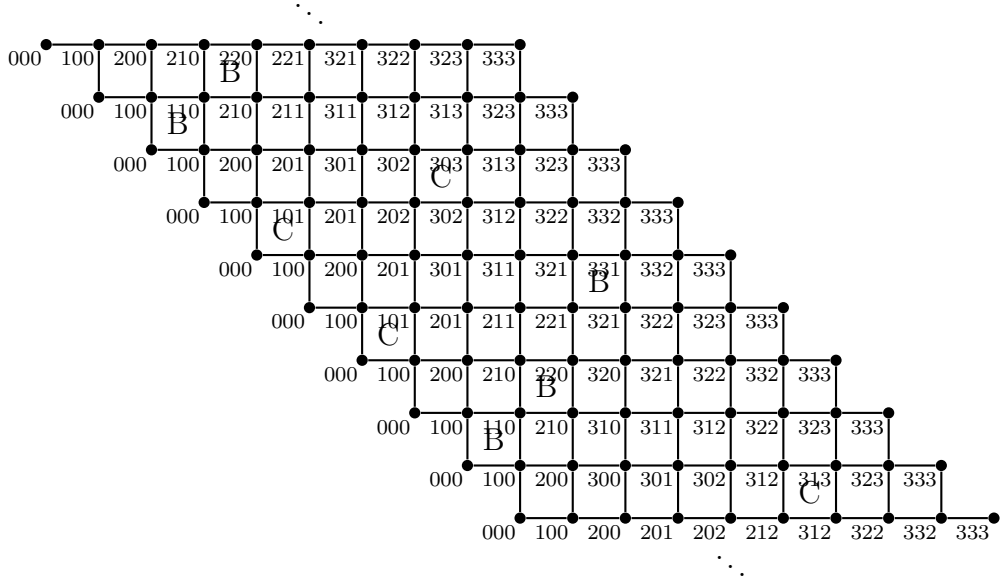
\begin{figure}
\begin{center}
\makebox[\textwidth]{\begin{tikzpicture}[scale=0.7]

\node[shape=circle,fill=black,inner sep=1.5,label={[xshift=-8,yshift=-14]:{\scriptsize $000$}}] (A0) at (0,0) {};
\node[shape=circle,fill=black,inner sep=1.5,label={[xshift=-8,yshift=-14]:{\scriptsize $100$}}] (A1) at (1,0) {};
\node[shape=circle,fill=black,inner sep=1.5,label={[xshift=-8,yshift=-14]:{\scriptsize $200$}}] (A2) at (2,0) {};
\node[shape=circle,fill=black,inner sep=1.5,label={[xshift=-8,yshift=-14]:{\scriptsize $210$}}] (A3) at (3,0) {};
\node[shape=circle,fill=black,inner sep=1.5,label={[xshift=-8,yshift=-14]:{\scriptsize $220$}}] (A4) at (4,0) {};
\node[shape=circle,fill=black,inner sep=1.5,label={[xshift=-8,yshift=-14]:{\scriptsize $221$}}] (A5) at (5,0) {};
\node[shape=circle,fill=black,inner sep=1.5,label={[xshift=-8,yshift=-14]:{\scriptsize $321$}}] (A6) at (6,0) {};
\node[shape=circle,fill=black,inner sep=1.5,label={[xshift=-8,yshift=-14]:{\scriptsize $322$}}] (A7) at (7,0) {};
\node[shape=circle,fill=black,inner sep=1.5,label={[xshift=-8,yshift=-14]:{\scriptsize $323$}}] (A8) at (8,0) {};
\node[shape=circle,fill=black,inner sep=1.5,label={[xshift=-8,yshift=-14]:{\scriptsize $333$}}] (A9) at (9,0) {};
\node at (3.5,-0.5) {\bf \large $\B$};

\node[shape=circle,fill=black,inner sep=1.5,label={[xshift=-8,yshift=-14]:{\scriptsize $000$}}] (B0) at (1,-1) {};
\node[shape=circle,fill=black,inner sep=1.5,label={[xshift=-8,yshift=-14]:{\scriptsize $100$}}] (B1) at (2,-1) {};
\node[shape=circle,fill=black,inner sep=1.5,label={[xshift=-8,yshift=-14]:{\scriptsize $110$}}] (B2) at (3,-1) {};
\node[shape=circle,fill=black,inner sep=1.5,label={[xshift=-8,yshift=-14]:{\scriptsize $210$}}] (B3) at (4,-1) {};
\node[shape=circle,fill=black,inner sep=1.5,label={[xshift=-8,yshift=-14]:{\scriptsize $211$}}] (B4) at (5,-1) {};
\node[shape=circle,fill=black,inner sep=1.5,label={[xshift=-8,yshift=-14]:{\scriptsize $311$}}] (B5) at (6,-1) {};
\node[shape=circle,fill=black,inner sep=1.5,label={[xshift=-8,yshift=-14]:{\scriptsize $312$}}] (B6) at (7,-1) {};
\node[shape=circle,fill=black,inner sep=1.5,label={[xshift=-8,yshift=-14]:{\scriptsize $313$}}] (B7) at (8,-1) {};
\node[shape=circle,fill=black,inner sep=1.5,label={[xshift=-8,yshift=-14]:{\scriptsize $323$}}] (B8) at (9,-1) {};
\node[shape=circle,fill=black,inner sep=1.5,label={[xshift=-8,yshift=-14]:{\scriptsize $333$}}] (B9) at (10,-1) {};
\node at (2.5,-1.5) {\bf \large $\B$};

\node[shape=circle,fill=black,inner sep=1.5,label={[xshift=-8,yshift=-14]:{\scriptsize $000$}}] (C0) at (2,-2) {};
\node[shape=circle,fill=black,inner sep=1.5,label={[xshift=-8,yshift=-14]:{\scriptsize $100$}}] (C1) at (3,-2) {};
\node[shape=circle,fill=black,inner sep=1.5,label={[xshift=-8,yshift=-14]:{\scriptsize $200$}}] (C2) at (4,-2) {};
\node[shape=circle,fill=black,inner sep=1.5,label={[xshift=-8,yshift=-14]:{\scriptsize $201$}}] (C3) at (5,-2) {};
\node[shape=circle,fill=black,inner sep=1.5,label={[xshift=-8,yshift=-14]:{\scriptsize $301$}}] (C4) at (6,-2) {};
\node[shape=circle,fill=black,inner sep=1.5,label={[xshift=-8,yshift=-14]:{\scriptsize $302$}}] (C5) at (7,-2) {};
\node[shape=circle,fill=black,inner sep=1.5,label={[xshift=-8,yshift=-14]:{\scriptsize $303$}}] (C6) at (8,-2) {};
\node[shape=circle,fill=black,inner sep=1.5,label={[xshift=-8,yshift=-14]:{\scriptsize $313$}}] (C7) at (9,-2) {};
\node[shape=circle,fill=black,inner sep=1.5,label={[xshift=-8,yshift=-14]:{\scriptsize $323$}}] (C8) at (10,-2) {};
\node[shape=circle,fill=black,inner sep=1.5,label={[xshift=-8,yshift=-14]:{\scriptsize $333$}}] (C9) at (11,-2) {};
\node at (7.5,-2.5) {\bf \large $\C$};

\node[shape=circle,fill=black,inner sep=1.5,label={[xshift=-8,yshift=-14]:{\scriptsize $000$}}] (D0) at (3,-3) {};
\node[shape=circle,fill=black,inner sep=1.5,label={[xshift=-8,yshift=-14]:{\scriptsize $100$}}] (D1) at (4,-3) {};
\node[shape=circle,fill=black,inner sep=1.5,label={[xshift=-8,yshift=-14]:{\scriptsize $101$}}] (D2) at (5,-3) {};
\node[shape=circle,fill=black,inner sep=1.5,label={[xshift=-8,yshift=-14]:{\scriptsize $201$}}] (D3) at (6,-3) {};
\node[shape=circle,fill=black,inner sep=1.5,label={[xshift=-8,yshift=-14]:{\scriptsize $202$}}] (D4) at (7,-3) {};
\node[shape=circle,fill=black,inner sep=1.5,label={[xshift=-8,yshift=-14]:{\scriptsize $302$}}] (D5) at (8,-3) {};
\node[shape=circle,fill=black,inner sep=1.5,label={[xshift=-8,yshift=-14]:{\scriptsize $312$}}] (D6) at (9,-3) {};
\node[shape=circle,fill=black,inner sep=1.5,label={[xshift=-8,yshift=-14]:{\scriptsize $322$}}] (D7) at (10,-3) {};
\node[shape=circle,fill=black,inner sep=1.5,label={[xshift=-8,yshift=-14]:{\scriptsize $332$}}] (D8) at (11,-3) {};
\node[shape=circle,fill=black,inner sep=1.5,label={[xshift=-8,yshift=-14]:{\scriptsize $333$}}] (D9) at (12,-3) {};
\node at (4.5,-3.5) {\bf \large $\C$};

\node[shape=circle,fill=black,inner sep=1.5,label={[xshift=-8,yshift=-14]:{\scriptsize $000$}}] (E0) at (4,-4) {};
\node[shape=circle,fill=black,inner sep=1.5,label={[xshift=-8,yshift=-14]:{\scriptsize $100$}}] (E1) at (5,-4) {};
\node[shape=circle,fill=black,inner sep=1.5,label={[xshift=-8,yshift=-14]:{\scriptsize $200$}}] (E2) at (6,-4) {};
\node[shape=circle,fill=black,inner sep=1.5,label={[xshift=-8,yshift=-14]:{\scriptsize $201$}}] (E3) at (7,-4) {};
\node[shape=circle,fill=black,inner sep=1.5,label={[xshift=-8,yshift=-14]:{\scriptsize $301$}}] (E4) at (8,-4) {};
\node[shape=circle,fill=black,inner sep=1.5,label={[xshift=-8,yshift=-14]:{\scriptsize $311$}}] (E5) at (9,-4) {};
\node[shape=circle,fill=black,inner sep=1.5,label={[xshift=-8,yshift=-14]:{\scriptsize $321$}}] (E6) at (10,-4) {};
\node[shape=circle,fill=black,inner sep=1.5,label={[xshift=-8,yshift=-14]:{\scriptsize $331$}}] (E7) at (11,-4) {};
\node[shape=circle,fill=black,inner sep=1.5,label={[xshift=-8,yshift=-14]:{\scriptsize $332$}}] (E8) at (12,-4) {};
\node[shape=circle,fill=black,inner sep=1.5,label={[xshift=-8,yshift=-14]:{\scriptsize $333$}}] (E9) at (13,-4) {};
\node at (10.5,-4.5) {\bf \large $\B$};

\node[shape=circle,fill=black,inner sep=1.5,label={[xshift=-8,yshift=-14]:{\scriptsize $000$}}] (F0) at (5,-5) {};
\node[shape=circle,fill=black,inner sep=1.5,label={[xshift=-8,yshift=-14]:{\scriptsize $100$}}] (F1) at (6,-5) {};
\node[shape=circle,fill=black,inner sep=1.5,label={[xshift=-8,yshift=-14]:{\scriptsize $101$}}] (F2) at (7,-5) {};
\node[shape=circle,fill=black,inner sep=1.5,label={[xshift=-8,yshift=-14]:{\scriptsize $201$}}] (F3) at (8,-5) {};
\node[shape=circle,fill=black,inner sep=1.5,label={[xshift=-8,yshift=-14]:{\scriptsize $211$}}] (F4) at (9,-5) {};
\node[shape=circle,fill=black,inner sep=1.5,label={[xshift=-8,yshift=-14]:{\scriptsize $221$}}] (F5) at (10,-5) {};
\node[shape=circle,fill=black,inner sep=1.5,label={[xshift=-8,yshift=-14]:{\scriptsize $321$}}] (F6) at (11,-5) {};
\node[shape=circle,fill=black,inner sep=1.5,label={[xshift=-8,yshift=-14]:{\scriptsize $322$}}] (F7) at (12,-5) {};
\node[shape=circle,fill=black,inner sep=1.5,label={[xshift=-8,yshift=-14]:{\scriptsize $323$}}] (F8) at (13,-5) {};
\node[shape=circle,fill=black,inner sep=1.5,label={[xshift=-8,yshift=-14]:{\scriptsize $333$}}] (F9) at (14,-5) {};
\node at (6.5,-5.5) {\bf \large $\C$};

\node[shape=circle,fill=black,inner sep=1.5,label={[xshift=-8,yshift=-14]:{\scriptsize $000$}}] (G0) at (6,-6) {};
\node[shape=circle,fill=black,inner sep=1.5,label={[xshift=-8,yshift=-14]:{\scriptsize $100$}}] (G1) at (7,-6) {};
\node[shape=circle,fill=black,inner sep=1.5,label={[xshift=-8,yshift=-14]:{\scriptsize $200$}}] (G2) at (8,-6) {};
\node[shape=circle,fill=black,inner sep=1.5,label={[xshift=-8,yshift=-14]:{\scriptsize $210$}}] (G3) at (9,-6) {};
\node[shape=circle,fill=black,inner sep=1.5,label={[xshift=-8,yshift=-14]:{\scriptsize $220$}}] (G4) at (10,-6) {};
\node[shape=circle,fill=black,inner sep=1.5,label={[xshift=-8,yshift=-14]:{\scriptsize $320$}}] (G5) at (11,-6) {};
\node[shape=circle,fill=black,inner sep=1.5,label={[xshift=-8,yshift=-14]:{\scriptsize $321$}}] (G6) at (12,-6) {};
\node[shape=circle,fill=black,inner sep=1.5,label={[xshift=-8,yshift=-14]:{\scriptsize $322$}}] (G7) at (13,-6) {};
\node[shape=circle,fill=black,inner sep=1.5,label={[xshift=-8,yshift=-14]:{\scriptsize $332$}}] (G8) at (14,-6) {};
\node[shape=circle,fill=black,inner sep=1.5,label={[xshift=-8,yshift=-14]:{\scriptsize $333$}}] (G9) at (15,-6) {};
\node at (9.5,-6.5) {\bf \large $\B$};

\node[shape=circle,fill=black,inner sep=1.5,label={[xshift=-8,yshift=-14]:{\scriptsize $000$}}] (H0) at (7,-7) {};
\node[shape=circle,fill=black,inner sep=1.5,label={[xshift=-8,yshift=-14]:{\scriptsize $100$}}] (H1) at (8,-7) {};
\node[shape=circle,fill=black,inner sep=1.5,label={[xshift=-8,yshift=-14]:{\scriptsize $110$}}] (H2) at (9,-7) {};
\node[shape=circle,fill=black,inner sep=1.5,label={[xshift=-8,yshift=-14]:{\scriptsize $210$}}] (H3) at (10,-7) {};
\node[shape=circle,fill=black,inner sep=1.5,label={[xshift=-8,yshift=-14]:{\scriptsize $310$}}] (H4) at (11,-7) {};
\node[shape=circle,fill=black,inner sep=1.5,label={[xshift=-8,yshift=-14]:{\scriptsize $311$}}] (H5) at (12,-7) {};
\node[shape=circle,fill=black,inner sep=1.5,label={[xshift=-8,yshift=-14]:{\scriptsize $312$}}] (H6) at (13,-7) {};
\node[shape=circle,fill=black,inner sep=1.5,label={[xshift=-8,yshift=-14]:{\scriptsize $322$}}] (H7) at (14,-7) {};
\node[shape=circle,fill=black,inner sep=1.5,label={[xshift=-8,yshift=-14]:{\scriptsize $323$}}] (H8) at (15,-7) {};
\node[shape=circle,fill=black,inner sep=1.5,label={[xshift=-8,yshift=-14]:{\scriptsize $333$}}] (H9) at (16,-7) {};
\node at (8.5,-7.5) {\bf \large $\B$};

\node[shape=circle,fill=black,inner sep=1.5,label={[xshift=-8,yshift=-14]:{\scriptsize $000$}}] (I0) at (8,-8) {};
\node[shape=circle,fill=black,inner sep=1.5,label={[xshift=-8,yshift=-14]:{\scriptsize $100$}}] (I1) at (9,-8) {};
\node[shape=circle,fill=black,inner sep=1.5,label={[xshift=-8,yshift=-14]:{\scriptsize $200$}}] (I2) at (10,-8) {};
\node[shape=circle,fill=black,inner sep=1.5,label={[xshift=-8,yshift=-14]:{\scriptsize $300$}}] (I3) at (11,-8) {};
\node[shape=circle,fill=black,inner sep=1.5,label={[xshift=-8,yshift=-14]:{\scriptsize $301$}}] (I4) at (12,-8) {};
\node[shape=circle,fill=black,inner sep=1.5,label={[xshift=-8,yshift=-14]:{\scriptsize $302$}}] (I5) at (13,-8) {};
\node[shape=circle,fill=black,inner sep=1.5,label={[xshift=-8,yshift=-14]:{\scriptsize $312$}}] (I6) at (14,-8) {};
\node[shape=circle,fill=black,inner sep=1.5,label={[xshift=-8,yshift=-14]:{\scriptsize $313$}}] (I7) at (15,-8) {};
\node[shape=circle,fill=black,inner sep=1.5,label={[xshift=-8,yshift=-14]:{\scriptsize $323$}}] (I8) at (16,-8) {};
\node[shape=circle,fill=black,inner sep=1.5,label={[xshift=-8,yshift=-14]:{\scriptsize $333$}}] (I9) at (17,-8) {};
\node at (14.5,-8.5) {\bf \large $\C$};

\node[shape=circle,fill=black,inner sep=1.5,label={[xshift=-8,yshift=-14]:{\scriptsize $000$}}] (J0) at (9,-9) {};
\node[shape=circle,fill=black,inner sep=1.5,label={[xshift=-8,yshift=-14]:{\scriptsize $100$}}] (J1) at (10,-9) {};
\node[shape=circle,fill=black,inner sep=1.5,label={[xshift=-8,yshift=-14]:{\scriptsize $200$}}] (J2) at (11,-9) {};
\node[shape=circle,fill=black,inner sep=1.5,label={[xshift=-8,yshift=-14]:{\scriptsize $201$}}] (J3) at (12,-9) {};
\node[shape=circle,fill=black,inner sep=1.5,label={[xshift=-8,yshift=-14]:{\scriptsize $202$}}] (J4) at (13,-9) {};
\node[shape=circle,fill=black,inner sep=1.5,label={[xshift=-8,yshift=-14]:{\scriptsize $212$}}] (J5) at (14,-9) {};
\node[shape=circle,fill=black,inner sep=1.5,label={[xshift=-8,yshift=-14]:{\scriptsize $312$}}] (J6) at (15,-9) {};
\node[shape=circle,fill=black,inner sep=1.5,label={[xshift=-8,yshift=-14]:{\scriptsize $322$}}] (J7) at (16,-9) {};
\node[shape=circle,fill=black,inner sep=1.5,label={[xshift=-8,yshift=-14]:{\scriptsize $332$}}] (J8) at (17,-9) {};
\node[shape=circle,fill=black,inner sep=1.5,label={[xshift=-8,yshift=-14]:{\scriptsize $333$}}] (J9) at (18,-9) {};
\node at (5,0.75) {$\ddots$};
\node at (13,-9.75) {$\ddots$};

\draw[thick] (A0)--(A1)--(A2)--(A3)--(A4)--(A5)--(A6)--(A7)--(A8)--(A9);
\draw[thick] (B0)--(B1)--(B2)--(B3)--(B4)--(B5)--(B6)--(B7)--(B8)--(B9);
\draw[thick] (C0)--(C1)--(C2)--(C3)--(C4)--(C5)--(C6)--(C7)--(C8)--(C9);
\draw[thick] (D0)--(D1)--(D2)--(D3)--(D4)--(D5)--(D6)--(D7)--(D8)--(D9);
\draw[thick] (E0)--(E1)--(E2)--(E3)--(E4)--(E5)--(E6)--(E7)--(E8)--(E9);
\draw[thick] (F0)--(F1)--(F2)--(F3)--(F4)--(F5)--(F6)--(F7)--(F8)--(F9);
\draw[thick] (G0)--(G1)--(G2)--(G3)--(G4)--(G5)--(G6)--(G7)--(G8)--(G9);
\draw[thick] (H0)--(H1)--(H2)--(H3)--(H4)--(H5)--(H6)--(H7)--(H8)--(H9);
\draw[thick] (I0)--(I1)--(I2)--(I3)--(I4)--(I5)--(I6)--(I7)--(I8)--(I9);
\draw[thick] (J0)--(J1)--(J2)--(J3)--(J4)--(J5)--(J6)--(J7)--(J8)--(J9);

\draw[thick] (A1)--(B0);
\draw[thick] (A2)--(B1)--(C0);
\draw[thick] (A3)--(B2)--(C1)--(D0);
\draw[thick] (A4)--(B3)--(C2)--(D1)--(E0);
\draw[thick] (A5)--(B4)--(C3)--(D2)--(E1)--(F0);
\draw[thick] (A6)--(B5)--(C4)--(D3)--(E2)--(F1)--(G0);
\draw[thick] (A7)--(B6)--(C5)--(D4)--(E3)--(F2)--(G1)--(H0);
\draw[thick] (A8)--(B7)--(C6)--(D5)--(E4)--(F3)--(G2)--(H1)--(I0);
\draw[thick] (A9)--(B8)--(C7)--(D6)--(E5)--(F4)--(G3)--(H2)--(I1)--(J0);
\draw[thick] (B9)--(C8)--(D7)--(E6)--(F5)--(G4)--(H3)--(I2)--(J1);
\draw[thick] (C9)--(D8)--(E7)--(F6)--(G5)--(H4)--(I3)--(J2);
\draw[thick] (D9)--(E8)--(F7)--(G6)--(H5)--(I4)--(J3);
\draw[thick] (E9)--(F8)--(G7)--(H6)--(I5)--(J4);
\draw[thick] (F9)--(G8)--(H7)--(I6)--(J5);
\draw[thick] (G9)--(H8)--(I7)--(J6);
\draw[thick] (H9)--(I8)--(J7);
\draw[thick] (I9)--(J8);
\end{tikzpicture}}
\end{center}
\caption{A decorated growth diagram for a Kreweras word.} \label{fig:kword_growth_diagram_ex}
\end{figure}

For $i \in \mathbb{Z}$, let us refer to the set of squares in positions of the form~$(x,-i)$ as the $i$th \dfn{row} of a diagram. Similarly, for $j\in \mathbb{Z}$, let us refer to the set of squares in positions of the form $(-3n-1+j,y)$ as the $j$th \dfn{column} of a diagram. \Cref{ex:kword_growth_diagram} may have suggested that in every row of the decorated growth diagram of a Kreweras word there is a unique filled square. This is true:

\begin{prop} \label{prop:kword_filled_rows}
Let $w$ be a Kreweras word of length $3n$ and consider its decorated growth diagram. Recall the definition of $\iota(w)$ from \cref{sec:intro}. Then for any $i \in \mathbb{Z}$, the square in the $i$th row and $(\iota(w')+i-1)$th column is filled with the letter $w'_{\iota(w')}$, where $w'  \coloneqq  \pro^{i-1}(w)$. This is the unique filled square in the $i$th row.
\end{prop}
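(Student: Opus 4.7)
The plan is to reduce to the base case $i=1$ using translation-invariance of growth diagrams, and then to perform a direct square-by-square check in the first row using the explicit promotion formula from \cref{prop:kword_poset_pro}. By \cref{cor:growth_diagram_symmetry}\eqref{item:trans_symmetry}, the decorated growth diagram of $w' := \pro^{i-1}(w)$ is the $(-(i-1), i-1)$-translate of the one for $w$, so the $i$-th row and $(\iota(w')+i-1)$-th column of $w$'s diagram correspond to the first row and $\iota(w')$-th column of $w'$'s. Hence it suffices to show: in the first row of the decorated growth diagram of an arbitrary Kreweras word $w$ of length $3n$, the unique filled square lies in column $\iota(w)$ and is filled with $w_{\iota(w)}$.

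Set $\iota := \iota(w)$, and assume without loss of generality $w_\iota = \B$ (the $\C$-case is symmetric). Write $(a_m, b_m, c_m)$ for the letter counts of the prefix $w_1 \cdots w_m$, and $(a'_m, b'_m, c'_m)$ analogously for $\pro(w)$. Using the explicit formula for $\pro(w)$ from \cref{prop:kword_poset_pro}, one computes
\begin{align*}
(a'_m, b'_m, c'_m) &= (a_{m+1}-1,\ b_{m+1},\ c_{m+1}) &&\text{for } 0 \leq m \leq \iota-2, \\
(a'_m, b'_m, c'_m) &= (a_{m+1},\ b_{m+1}-1,\ c_{m+1}) &&\text{for } \iota-1 \leq m \leq 3n-1.
\end{align*}

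The square in column $j$ of the first row has top-left $(a_{j-1}, b_{j-1}, c_{j-1})$, top-right $(a_j, b_j, c_j)$, and bottom-left $(a'_{j-2}, b'_{j-2}, c'_{j-2})$, so by the local rule of \cref{fig:growth_diagram_rule_vn} it is filled (with letter $w_j$) exactly when the candidate $(a'_{j-2}, b'_{j-2}, c'_{j-2}) + e_{w_j}$ does not lie in $J(n)$. A short calculation, uniform in $w_j \in \{\A,\B,\C\}$, simplifies this candidate to $(a_j-1,\ b_j,\ c_j)$ when $j \leq \iota$ and to $(a_j,\ b_j-1,\ c_j)$ when $j \geq \iota+1$. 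For $j \leq \iota-1$ the former is in $J(n)$ thanks to the \emph{strict} inequalities $b_j < a_j$ and $c_j < a_j$ forced by the minimality of $\iota$; for $j \geq \iota+1$ the latter is in $J(n)$ by the ordinary Kreweras inequalities $b_j, c_j \leq a_j \leq n$ (together with $b_j \geq 1$, since $b_j \geq b_\iota = a_\iota \geq 1$); and only at $j = \iota$ does the candidate $(a_\iota - 1,\ a_\iota,\ c_\iota)$ escape $J(n)$, because the defining balance condition $b_\iota = a_\iota$ is exactly what makes $a_\iota > a_\iota - 1$. This pins down both the unique filled square and its letter.

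The main obstacle is the bookkeeping step of correctly deriving the two-piece formula for $(a'_m,b'_m,c'_m)$ from the three-piece description of $\pro(w)$, and then reducing the $w_j$-case analysis to the uniform closed form above. Once these pieces are in hand, the three required $J(n)$-membership tests become one-line applications of Kreweras prefix inequalities, and the point of the proof is transparent: the balance condition at $\iota$ is precisely what creates the single escape from $J(n)$ in the row.
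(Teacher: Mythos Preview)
Your proof is correct and follows essentially the same approach as the paper's: both reduce to the case $i=1$ via the translation symmetry of \cref{cor:growth_diagram_symmetry}\eqref{item:trans_symmetry}, and then identify the unique column in the first row at which the ``otherwise'' case of the local rule fires. The only difference is stylistic: the paper argues at the level of the involutions $\tau_i$ (observing that the local rule \emph{is} the $\tau_i$ rule and then citing the analysis in the proof of \cref{prop:kword_poset_pro}), whereas you unpack this into an explicit coordinate computation of the prefix counts $(a'_m,b'_m,c'_m)$ of $\pro(w)$ and check $J(n)$-membership directly.
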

\begin{proof}
By the translation symmetry of growth diagrams, \cref{cor:growth_diagram_symmetry}~\eqref{item:trans_symmetry}, it is enough to prove this for $i=1$. As mentioned above, the local rule defining growth diagrams of linear extensions reflects the behavior of the involutions~$\tau_i$. In particular, a square in the $1$st row and $j$th column corresponds to an application of $\tau_{j-1}$ when carrying out the product $\tau_{\ell} \circ \cdots \circ \tau_2 \circ \tau_1$ to perform promotion. We can view these $\tau_i$ as acting directly on the Kreweras word~$w$ as in the proof of \cref{prop:kword_poset_pro}, and we will be in an ``otherwise'' for a square in \cref{fig:growth_diagram_rule_vn} exactly when we are in an ``otherwise'' case for corresponding $\tau_i$. As the proof of that proposition explains, the unique $\tau_i$ for which an ``otherwise'' case occurs is $i=\iota(w)-1$.
\end{proof}

Thanks to the $x$/$y$ symmetry of growth diagrams in \cref{cor:growth_diagram_symmetry}~\eqref{item:xy_symmetry}, \cref{prop:kword_filled_rows} also implies that every column of a decorated growth diagram contains a unique filled square.

Now we will demonstrate how $\sigma_w$ and $\varepsilon_w$ from \cref{sec:proof} can easily be read off from decorated growth diagrams.

\begin{lemma} \label{lem:perm_from_growth_diagram}
Let $w$ be a Kreweras word of length $3n$ and consider its decorated growth diagram. Let $1 \leq i \leq 3n$, and suppose the unique filled square in the $i$th row is in the $j$th column and is filled with $\varepsilon \in \{\B,\C\}$. Then we have~$\varepsilon_w(i) = \varepsilon$ and $\sigma_w(i) = \langle j \rangle_{3n}$, where $\langle k \rangle_{3n}$ denotes the unique element of $\{1,\ldots,3n\}$ congruent to~$k$ modulo $3n$.
\end{lemma}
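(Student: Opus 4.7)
The plan is to reduce to the case $i=1$ by exploiting the translation-equivariance of growth diagrams under promotion together with the rotation-equivariance of $(\sigma_w,\varepsilon_w)$ from \cref{lem:key}.

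First I would establish the base case $i=1$. Every Kreweras word starts with $\A$, so $w_1=\A$, and by \cref{prop:basic_perm_props}\eqref{item:no_double_descents} the trip in $\D_w$ beginning at $1$ never turns: $\sigma_w(1)$ is simply the smaller of the two indices $b,c$ with $(1,b)\in\M^\B_w$ and $(1,c)\in\M^\C_w$. But the standard Dyck-matching description of $\M^\B_w$ forces $b$ to be the smallest index $\geq 1$ for which the prefix $w_1\cdots w_b$ has equally many $\A$'s and $\B$'s, and similarly $c$ is the smallest index with equally many $\A$'s and $\C$'s. By definition of $\iota(w)$ we therefore get $\sigma_w(1)=\min(b,c)=\iota(w)$, and then $\varepsilon_w(1)=w_{\sigma_w(1)}=w_{\iota(w)}$ directly from the definition of $\varepsilon_w$. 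Combined with \cref{prop:kword_filled_rows} for $i=1$, this gives exactly the claim when $i=1$.

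Next I would bootstrap to arbitrary $i$ by noting that both sides of the desired identity behave compatibly under promotion. On the growth-diagram side, \cref{cor:growth_diagram_symmetry}\eqref{item:trans_symmetry} says that translating the decorated growth diagram of $w$ by $(-(i-1),i-1)$ produces the decorated growth diagram of $w' \coloneqq \pro^{i-1}(w)$; concretely, the square in the $i$th row and $j$th column of $\D_w$ becomes the square in the $1$st row and $(j-i+1)$th column of $\D_{w'}$, and carries the same filling. On the permutation side, iterating \cref{lem:key}\eqref{item:perm_rot} gives $\sigma_{w'}(1)=\langle \sigma_w(i)-(i-1)\rangle_{3n}$, and iterating \cref{lem:key}\eqref{item:eps_rot} gives $\varepsilon_{w'}(1)=\varepsilon_w(i)$ (no sign flip occurs so long as $i\leq 3n$, as one sees by unrolling the recursion). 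Applying the base case to $w'$ yields $\sigma_{w'}(1)=\iota(w')$ and $\varepsilon_{w'}(1)=w'_{\iota(w')}$; combining these with the two equivariances gives $\sigma_w(i)\equiv \iota(w')+(i-1)=j\pmod{3n}$ and $\varepsilon_w(i)=\varepsilon$, as required.

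The only real subtlety — the step I would handle most carefully — is the bookkeeping that matches column-translation on the growth-diagram side with conjugation by the long cycle on the permutation side, in particular verifying that the modular reduction $\langle\cdot\rangle_{3n}$ in the statement corresponds precisely to the cyclic shift $c^{-(i-1)}$, and that iterating \cref{lem:key}\eqref{item:eps_rot} really produces no sign change for $i$ in the range $\{1,\ldots,3n\}$. Everything else is bookkeeping on top of the two equivariance statements already established.
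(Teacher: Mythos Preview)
Your proposal is correct and follows essentially the same approach as the paper's own proof: establish the base case $i=1$ by observing that the trip from $1$ never turns and terminates at $\iota(w)$, then reduce general $i$ to this base case via \cref{lem:key} applied to $w'=\pro^{i-1}(w)$ together with \cref{prop:kword_filled_rows}. Your write-up is in fact slightly more explicit than the paper's about the column-translation bookkeeping and about why iterating \cref{lem:key}\eqref{item:eps_rot} introduces no sign flip for $1\le i\le 3n$.
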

\begin{proof}

By \cref{prop:kword_filled_rows}, we need to show that $\sigma_w(i) = \langle \iota(\pro^{i-1}(w))+i-1 \rangle_{3n}$ and $\varepsilon_w(i) = \pro^{i-1}(w)_{\iota(\pro^{i-1}(w))}$ for all $1 \leq i \leq 3n$.

First let us explain why this holds for $i=1$. Note that $(1,\iota(w))$ is the ``near'' arc emanating from $1$ in the Kreweras bump diagram $\D_w$. Moreover, the rules of the road are such that in the trip starting at $1$ we will never make any turns on our way from $1$ towards~$\iota(w)$. So indeed $\sigma_w(1) = \iota(w)$, and $\varepsilon_w(1) = w_{\iota(w)}$.

Then we have thanks to \cref{lem:key} that $\sigma_w(i) = \langle \sigma_{\pro^{i-1}(w)}(1) +i-1\rangle_{3n}$ and $\varepsilon_w(i) = \varepsilon_{\pro^{i-1}(w)}(1)$ for any $1\leq i \leq n$. Thus, by the result in the previous paragraph applied to $\pro^{i-1}(w)$, we are done.
\end{proof}

\begin{example} \label{ex:perm_growth_diagram}
As in \cref{ex:kword_growth_diagram}, let $w = \A \A \B \B \C \A \C \C \B$. We saw in \cref{ex:key_lem} that $\sigma_w = [4,3,8,5,2,7,1,9,6]$ and $\varepsilon_w=[\B, \B, \C, \C, \B, \C, \B, \B, \C]$. In agreement with \cref{lem:perm_from_growth_diagram}, we can also easily read off this $\sigma_w$ and $\varepsilon_w$ from $w$'s decorated growth diagram, which is depicted in \cref{fig:kword_growth_diagram_ex}.
\end{example}

We could have defined $\sigma_w$ by setting $\sigma_w(i)  \coloneqq  \langle \iota(\pro^{i-1}(w))+i-1 \rangle_{3n}$ in light of \cref{lem:perm_from_growth_diagram}. However, if we did so, it would not be at all clear that $\sigma_w$ is a permutation. This is why we defined $\sigma_w$ in terms of trips in Kreweras bump diagrams.

Let us record just a few more basic properties of decorated growth diagrams in the following proposition.

\begin{prop} \label{prop:kword_growth_diagram_basics}
Let $w$ be a Kreweras word of length $3n$ and consider its decorated growth diagram. Then,
\begin{enumerate}[(a)]
\item \label{item:periodicity} for any $(x,y) \in \mathbb{Z}^2$, if the square in position~$(x,y)$ is filled with~$\varepsilon$, then the square in position $(x+3n,y-3n)$ is filled with~$-\varepsilon$;
\item \label{item:signs_flip_rows} for any $i \in \mathbb{Z}$, if the unique filled square in the $i$th row is filled with~$\varepsilon$, then the unique filled square in the $i$th column is filled with $-\varepsilon$.
\end{enumerate}
\end{prop}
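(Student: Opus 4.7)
The plan is that (a) will follow quickly by comparing two different descriptions of the growth diagram of $\pro^{3n}(w)$, while (b) will require a more careful case analysis using Lemma 4.8 and Proposition 3.6.

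For (a), I would observe that the poset automorphism $\phi\colon V(n) \to V(n)$ swapping $\B_k \leftrightarrow \C_k$ acts on order ideals by swapping the $\B$- and $\C$-coordinates in $J(n)$. Applying $\phi$ to every label of $w$'s growth diagram produces the growth diagram of the Kreweras word obtained from $w$ by swapping $\B$'s and $\C$'s; this operation keeps the set of filled squares the same but negates every filling. By Theorem 1.1 this Kreweras word is exactly $\pro^{3n}(w)$, and by Corollary 4.3(a) its growth diagram is also obtained from $w$'s by translating by $(-3n, 3n)$. Comparing these two descriptions of the same diagram immediately yields the periodicity in (a).

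For (b), the first step is to observe that the correspondence sending a row to the column containing its (unique) filled square is a bijection on $\mathbb{Z}$: every row has a unique filled square by Proposition 4.7, and every column likewise by the $x/y$-reflected analogue via Corollary 4.3(b). So the filling of column $i$ equals the filling of whichever row $i^\#$ contains the filled square lying in column $i$. Using Lemma 4.8 together with the fact that the valid squares of row $i'$ lie in columns $[i'+1, i'+3n]$, I would compute: $i^\# = \sigma_w^{-1}(i)$ when $\sigma_w^{-1}(i) < i$, and $i^\# = \sigma_w^{-1}(i) - 3n$ when $\sigma_w^{-1}(i) > i$. By Proposition 3.6(d) these two cases correspond to $w_i \in \{\B, \C\}$ and $w_i = \A$ respectively, and in the second case part (a) contributes an extra sign flip; so the filling of column $i$ is $\varepsilon_w(\sigma_w^{-1}(i))$ when $w_i \in \{\B, \C\}$ and $-\varepsilon_w(\sigma_w^{-1}(i))$ when $w_i = \A$.

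The main step, and the place I expect the real bookkeeping, is verifying that this filling equals $-\varepsilon_w(i)$. The key input is the fact, immediate from the definition of $\varepsilon_w$ together with Proposition 3.6(c), that $\varepsilon_w(j) = -w_j$ whenever $w_j \in \{\B, \C\}$. The case $w_i \in \{\B, \C\}$ is then direct: setting $i^* = \sigma_w^{-1}(i)$ gives $\varepsilon_w(i^*) = w_{\sigma_w(i^*)} = w_i$, and the target $-\varepsilon_w(i)$ also equals $w_i$. The case $w_i = \A$ is subtler: Proposition 3.6(b) forces $w_{i^*} \in \{\B, \C\}$, after which Proposition 3.6(c) applied to $i^*$ in its second sub-case (since $w_{\sigma_w(i^*)} = w_i = \A$) yields $w_{\sigma_w^2(i^*)} = -w_{i^*}$; unpacking definitions produces $\varepsilon_w(i^*) = \varepsilon_w(i) = -w_{i^*}$, and the sign flip from part (a) then converts $-\varepsilon_w(i^*)$ into the required $-\varepsilon_w(i)$.
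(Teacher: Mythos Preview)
Your proof is correct. Part~(a) is the same as the paper's argument, with the automorphism~$\phi$ making explicit what the paper leaves implicit.

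For part~(b) you take a somewhat different route. The paper observes that, by the translation symmetry in \cref{cor:growth_diagram_symmetry}\eqref{item:trans_symmetry}, it suffices to verify the claim at a \emph{single} row/column index; it then chooses $i=\sigma_w(1)$, where the filled square in column~$i$ is exactly the filled square in row~$1$, and the identity $\varepsilon_w(1)=-\varepsilon_w(\sigma_w(1))$ drops out of \cref{lem:perm_from_growth_diagram} and \cref{prop:basic_perm_props}\eqref{item:signs_flip} in one line. You instead treat every $i\in[1,3n]$ directly, locating the row $i^\#$ whose filled square sits in column~$i$ and splitting into the cases $w_i\in\{\B,\C\}$ and $w_i=\A$, with part~(a) supplying the sign flip in the second case. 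Both arguments rest on the same two ingredients (\cref{lem:perm_from_growth_diagram} and \cref{prop:basic_perm_props}); the paper's is shorter, while yours is more explicit and sidesteps the need to justify why a single row/column pair suffices.
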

\begin{proof}
For \eqref{item:periodicity}: this is an immediate consequence of the translation symmetry of growth diagrams in \cref{cor:growth_diagram_symmetry}~\eqref{item:trans_symmetry}, and our main result, \cref{thm:main}.

For \eqref{item:signs_flip_rows}: by the translation symmetry of \cref{cor:growth_diagram_symmetry}~\eqref{item:trans_symmetry} it is enough to prove this statement for a single row/column pair. Let us prove it for the $\sigma_w(1)$th row. For the $\sigma_w(1)$th row, this statement is a consequence of the interpretation of $\varepsilon_w$ in \cref{lem:perm_from_growth_diagram} and \cref{prop:basic_perm_props}~\eqref{item:signs_flip}.
\end{proof}

We now state our main results about evacuation of Kreweras words. For these we need the notion of reverse-complementation of permutations.

\begin{definition}
For a permutation $\sigma \in \Sym_m$, the \dfn{reverse-complement} of $\sigma$, denoted $\rc(\sigma)$, is the conjugation of $\sigma$ by the \dfn{longest element} of the symmetric group $w_0  \coloneqq  [m,m-1,\ldots,1] \in \Sym_{m}$; i.e.,
\[\rc(\sigma)  \coloneqq  w_0^{-1} \circ \sigma \circ w_0.\]
Note that reverse-complementation commutes with inversion because $w_0$ is an involution.
\end{definition}

\begin{lemma} \label{lem:perm_evac}
Let $w$ be a Kreweras word of length $3n$. Then,
\begin{enumerate}[(a)]
\item \label{item:perm_evac} $\sigma_{\evac(w)} = \rc(\sigma^{-1}_w)$;
\item \label{item:epsilon_evac} $\varepsilon_{\evac(w)} = [-\varepsilon_w(3n), -\varepsilon_w(3n-1), \ldots, -\varepsilon_w(1)]$.
\end{enumerate}
\end{lemma}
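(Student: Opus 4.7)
My plan is to derive \cref{lem:perm_evac} by reading off $\sigma_{\evac(w)}$ and $\varepsilon_{\evac(w)}$ directly from the decorated growth diagram of $w^*\coloneqq\evac(w)$, which by \cref{cor:growth_diagram_symmetry}\eqref{item:xy_symmetry} is the reflection of $w$'s decorated growth diagram across the line $y=x$. The first step is to verify that this reflection preserves not only the growth-diagram labels but also the decorations: inspection of \cref{fig:growth_diagram_rule_vn} shows that whether a square is in the ``otherwise'' case, and if so which letter $j\in\{\B,\C\}$ fills it, is determined entirely by the four corner labels together with the condition $(a,b,c)+e_j\notin J(n)$, a condition symmetric in the horizontal and vertical steps. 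So the decorated growth diagram of $w^*$ is literally obtained from that of $w$ by interchanging the $x$- and $y$-coordinates.

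Next I would translate the reflection into a row/column correspondence. A square at position $(x,-i)$ in $w^*$'s diagram is the mirror of one at position $(-i,x)$ in $w$'s diagram, and since $-i=-3n-1+(3n+1-i)$, this means row $i$ of $w^*$ corresponds to column $3n+1-i$ of $w$. Combined with \cref{lem:perm_from_growth_diagram}, the unique filled square in row $i$ of $w^*$'s diagram is the mirror of the unique filled square in column $3n+1-i$ of $w$'s diagram, and carries the same fill. Part~\eqref{item:epsilon_evac} then drops out immediately from \cref{prop:kword_growth_diagram_basics}\eqref{item:signs_flip_rows}, which tells us that the fill in column $c$ of $w$'s diagram is $-\varepsilon_w(c)$; setting $c=3n+1-i$ gives $\varepsilon_{\evac(w)}(i)=-\varepsilon_w(3n+1-i)$.

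For part~\eqref{item:perm_evac} I would locate the row of $w$'s diagram that hosts this filled square. By \cref{lem:perm_from_growth_diagram} the unique filled square of row $r$ sits in column $\sigma_w(r)$ (modulo $3n$), so the unique filled square of column $3n+1-i$ lies in row $\sigma_w^{-1}(3n+1-i)$, i.e., at position $(-i,-\sigma_w^{-1}(3n+1-i))$ in $w$'s diagram. Reflecting to $w^*$'s diagram puts it at $(-\sigma_w^{-1}(3n+1-i),-i)$, i.e.\ in row $i$ and column $3n+1-\sigma_w^{-1}(3n+1-i)$. Applying \cref{lem:perm_from_growth_diagram} one more time yields
\[ \sigma_{\evac(w)}(i) \;=\; 3n+1-\sigma_w^{-1}(3n+1-i) \;=\; \rc(\sigma_w^{-1})(i), \]
the second equality being immediate from $w_0(k)=3n+1-k$.

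The main obstacle I anticipate is the careful verification that decorations really are invariant under the $x/y$ swap of the local rule; once that is in hand, the rest is a short piece of bookkeeping with row and column indices plus a single appeal to \cref{prop:kword_growth_diagram_basics}\eqref{item:signs_flip_rows}. A minor subtlety is that column indices in the periodically extended diagram can fall outside $[1,3n]$, but this is absorbed by the $\langle\cdot\rangle_{3n}$ reduction already built into \cref{lem:perm_from_growth_diagram}.
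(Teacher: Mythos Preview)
Your proposal is correct and follows essentially the same approach as the paper: both arguments use the $x$/$y$ symmetry of growth diagrams (\cref{cor:growth_diagram_symmetry}\eqref{item:xy_symmetry}) to transport \cref{lem:perm_from_growth_diagram} from rows to columns, then invoke \cref{prop:kword_growth_diagram_basics}\eqref{item:signs_flip_rows} for part~\eqref{item:epsilon_evac} and periodicity (either explicitly via \cref{prop:kword_growth_diagram_basics}\eqref{item:periodicity} as in the paper, or implicitly via the $\langle\cdot\rangle_{3n}$ reduction as you do) for part~\eqref{item:perm_evac}. Your write-up is in fact more careful than the paper's in explicitly checking that the decorations (the ``otherwise'' fills) are invariant under the $x$/$y$ swap of the local rule, a point the paper leaves implicit.
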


\begin{thm} \label{thm:kword_evac}
Let $w$ be a Kreweras word of length $3n$. Then
\[\evac(w) = (w_{\sigma_w(3n)}, w_{\sigma_w(3n-1)}, \ldots, w_{\sigma_w(1)}).\]
\end{thm}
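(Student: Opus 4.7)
The plan is to read off $\evac(w)$ from the pair $(\sigma_{\evac(w)}, \varepsilon_{\evac(w)})$ using \cref{cor:perm_epsilon_determine_kword}, after applying \cref{lem:perm_evac} to express these in terms of $\sigma_w$ and $\varepsilon_w$. Write $v \coloneqq \evac(w)$ and let $k \coloneqq 3n+1-i$, so that the $i$th entry of the right-hand side of the theorem is $w_{\sigma_w(k)}$. By \cref{lem:perm_evac}\eqref{item:perm_evac}, $\sigma_v = w_0 \, \sigma_w^{-1} \, w_0$, hence $\sigma_v^{-1}(i) = 3n+1-\sigma_w(k)$; and by \cref{lem:perm_evac}\eqref{item:epsilon_evac}, $\varepsilon_v(\sigma_v^{-1}(i)) = -\varepsilon_w(\sigma_w(k))$.

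First I would dispose of the letter-$\A$ positions. By \cref{cor:perm_epsilon_determine_kword}, $v_i = \A$ exactly when $\sigma_v^{-1}(i) > i$, which by the computation above translates to $\sigma_w(k) < k$; and by \cref{prop:basic_perm_props}\eqref{item:anti_exceedances} this is in turn equivalent to $w_{\sigma_w(k)} \in \{\B, \C\}$ being false, i.e.\ $w_{\sigma_w(k)} = \A$. Thus the cases ``$v_i = \A$'' and ``$w_{\sigma_w(k)} = \A$'' coincide, giving $v_i = w_{\sigma_w(k)}$ whenever either side is~$\A$.

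Now suppose $v_i \neq \A$. Then $v_i = \varepsilon_v(\sigma_v^{-1}(i)) = -\varepsilon_w(\sigma_w(k))$ by \cref{cor:perm_epsilon_determine_kword} and the identity above. Since $w_{\sigma_w(k)} \in \{\B, \C\}$ in this case, \cref{prop:basic_perm_props}\eqref{item:signs_flip} applied at the position $\sigma_w(k)$ forces $\varepsilon_w(\sigma_w(k)) = -w_{\sigma_w(k)}$: indeed, by the definition of $\varepsilon_w$, its value at any index $j$ with $w_j \neq \A$ coincides with $-w_j$ (both bullets of \cref{prop:basic_perm_props}\eqref{item:signs_flip} yield $-w_j$). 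Hence $v_i = -(-w_{\sigma_w(k)}) = w_{\sigma_w(k)}$, as desired.

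The only nontrivial obstacle is making sure the bookkeeping with $w_0$-conjugation and the two-step definition of $\varepsilon_w$ is handled correctly; once the identities $\sigma_v^{-1}(i) = 3n+1-\sigma_w(3n+1-i)$ and $\varepsilon_w(j) = -w_j$ (for $w_j \neq \A$) are in hand, both cases of \cref{cor:perm_epsilon_determine_kword} collapse to the single statement $v_i = w_{\sigma_w(3n+1-i)}$, which is exactly \cref{thm:kword_evac}.
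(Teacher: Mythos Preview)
Your proof is correct and follows essentially the same approach as the paper's own proof: both deduce the statement from \cref{lem:perm_evac} via \cref{cor:perm_epsilon_determine_kword}, first matching the positions of the $\A$'s using \cref{prop:basic_perm_props}\eqref{item:anti_exceedances}, and then handling the $\B/\C$ positions using \cref{prop:basic_perm_props}\eqref{item:signs_flip} to obtain $\varepsilon_w(j) = -w_j$ when $w_j \neq \A$. Your computation $\sigma_v^{-1}(i) = 3n+1-\sigma_w(3n+1-i)$ is exactly the index-by-index version of the paper's set identity for anti-exceedances under reverse-complement.
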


One nice property of the operations in \cref{lem:perm_evac} is that they are evidently involutive. It is also easy to see that they have the ``right'' interaction (in the sense of  \cref{prop:pro_evac_basics}) with the operations in \cref{lem:key}.

On the other hand, from the definition of $\sigma_w$ in terms of trips in Kreweras bump diagrams it is far from clear why the word $(w_{\sigma_w(3n)}, w_{\sigma_w(3n-1)}, \ldots, w_{\sigma_w(1)})$ appearing in \cref{thm:kword_evac} is a Kreweras word.

Before we prove these results, let us do an example.

\begin{example} \label{ex:evac}
As in \cref{ex:perm_growth_diagram}, let $w = \A \A \B \B \C \A \C \C \B$. We saw above that $\sigma_w = [4,3,8,5,2,7,1,9,6]$ and $\varepsilon_w=[\B, \B, \C, \C, \B, \C, \B, \B, \C]$.

Thanks to \cref{prop:growth_diagram_basics}, we can read off $\evac w$ from $w$'s growth diagram, which is depicted in \cref{fig:kword_growth_diagram_ex}: we have $\evac w = \A \B \A \C \A \C \C \B \B$. This agrees with \cref{thm:kword_evac}.

The Kreweras bump diagram of $\evac(w)$ is
\begin{center}
\begin{tikzpicture}[scale=0.85]
\def \x {0.7};
\node at (-1*\x,0) {$\D_{\evac(w)}=$};
\node (1) at (1*\x,0) {1};
\node (2) at (2*\x,0) {2};
\node (3) at (3*\x,0) {3};
\node (4) at (4*\x,0) {4};
\node (5) at (5*\x,0) {5};
\node (6) at (6*\x,0) {6};
\node (7) at (7*\x,0) {7};
\node (8) at (8*\x,0) {8};
\node (9) at (9*\x,0) {9};
\node at (1*\x,-0.55*\x) {$\A$};
\node at (2*\x,-0.55*\x) {$\B$};
\node at (3*\x,-0.55*\x) {$\A$};
\node at (4*\x,-0.55*\x) {$\C$};
\node at (5*\x,-0.55*\x) {$\A$};
\node at (6*\x,-0.55*\x) {$\C$};
\node at (7*\x,-0.55*\x) {$\C$};
\node at (8*\x,-0.55*\x) {$\B$};
\node at (9*\x,-0.55*\x) {$\B$};
\draw [ultra thick,blue] (1) to [out=90, in=90] (2);
\draw [ultra thick,blue] (3) to [out=90, in=90] (9);
\draw [ultra thick,blue] (5) to [out=90, in=90] (8);
\draw [ultra thick,red,dashed] (1) to [out=90, in=90] (7);
\draw [ultra thick,red,dashed] (3) to [out=90, in=90] (4);
\draw [ultra thick,red,dashed] (5) to [out=90, in=90] (6);
\end{tikzpicture}
\end{center}
From the diagram $\D_{\evac(w)}$ one could compute that $\sigma_{\evac(w)}=[2, 7, 4, 1, 6, 9, 8, 5, 3]$ and $\varepsilon_{\evac(w)}=[\B, \C, \C, \B, \C, \B, \B, \C, \C]$, in agreement with \cref{lem:perm_evac}.

By comparing this example with \cref{ex:key_lem}, we see that $\pro(w)=\evac(w)$  in this case, but that's a coincidence for this particular Kreweras word $w$ which does not always happen.
\end{example}

We proceed to prove \cref{lem:perm_evac} and \cref{thm:kword_evac}.

\begin{proof}[Proof of \cref{lem:perm_evac}]
Thanks to the $x$/$y$ symmetry of growth diagrams in \cref{cor:growth_diagram_symmetry}~\eqref{item:xy_symmetry}, \cref{lem:perm_from_growth_diagram} says that for $1 \leq j \leq 3n$, if the unique filled square in the $3n+1-j$th column of the decorated growth diagram of $w$ is in the $3n+1-i$th row, then $\sigma_{\evac(w)}(j)  \coloneqq  \langle i \rangle_{3n}$; and if this square is filled with $\varepsilon \in \{\B, \C\}$ then $\varepsilon_{\evac(w)}(j)  \coloneqq  \varepsilon$.

Then, the periodicity property of decorated growth diagrams in \cref{prop:kword_growth_diagram_basics}~\eqref{item:periodicity} (along with the interpretation of $\sigma_w$ in \cref{lem:perm_from_growth_diagram}) gives $\sigma_{\evac(w)} = \rc(\sigma^{-1}_w)$.

Meanwhile, \cref{prop:kword_growth_diagram_basics}~\eqref{item:signs_flip_rows} (along with the interpretation of $\varepsilon_w$ in \cref{lem:perm_from_growth_diagram}) gives $\varepsilon_{\evac(w)} = [-\varepsilon_w(3n), -\varepsilon_w(3n-1), \ldots, -\varepsilon_w(1)]$.
\end{proof}

\begin{proof}[Proof of \cref{thm:kword_evac}]
This is easy enough to see from the decorated growth diagram of $w$ directly, but we can also deduce it from \cref{lem:perm_evac}.

For any permutation $\sigma \in \Sym^m$, a straightforward unraveling of the definitions shows that
\[\{i\colon i\in[m], (\rc(\sigma^{-1}))^{-1}(i) > i\} = \{m-\sigma(i)\colon i\in [m], \sigma^{-1}(i) > i\}.\]
Hence \cref{cor:perm_epsilon_determine_kword} and \cref{lem:perm_evac}~\eqref{item:perm_evac} imply that at least the positions of the $\A$'s are the same in $\evac(w)$ and $(w_{\sigma_w(3n)}, w_{\sigma_w(3n-1)}, \ldots, w_{\sigma_w(1)})$.

Now let $1\leq i \leq 3n$ be such that $\evac(w)_i \in \{\B,\C\}$. Then~\cref{cor:perm_epsilon_determine_kword} and \cref{lem:perm_evac} imply that $\evac(w)_i = -\varepsilon_w(\sigma_w(3n+1-i))$. Since $\evac(w)_i \neq\A$, the previous paragraph tells us that $w_{\sigma_w(3n+1-i)}\in \{\B,\C\}$, and hence \cref{prop:basic_perm_props}~\eqref{item:signs_flip} tells us that $-\varepsilon_w(\sigma_w(3n+1-i)) = w_{\sigma_w(3n+1-i)}$. Thus $\evac(w)_i=w_{\sigma_w(3n+1-i)}$ in this case as well.
\end{proof}

Of course, it is also reasonable to ask how dual evacuation acts on Kreweras words. But \cref{prop:pro_evac_basics} says that $\evac^{*}(w)=\evac(\pro^{3n}(w))$ for any Kreweras word $w$ of length $3n$, and thus our main result, \cref{thm:main}, says that $\evac^{*}(w)$ is obtained from $\evac(w)$ by swapping all $\B$'s for $\C$'s and vice-versa. Similarly, we can see that $\sigma_{\evac^{*}(w)} = \rc(\sigma^{-1}_w)$ and $\varepsilon_{\evac^{*}(w)} = [\varepsilon_w(3n), \varepsilon_w(3n-1), \ldots, \varepsilon_w(1)]$ thanks to \cref{lem:key,lem:perm_evac}.

\section{Webs} \label{sec:webs}

In this section we reinterpret our results from the previous sections in the language of \dfn{webs}. We recall the notion of an $\mathfrak{sl}_3$-web, which is due to Kuperberg~\cite{kuperberg1996spiders}:

\begin{definition}
An \dfn{$\mathfrak{sl}_3$-web} $\W$ is a planar graph, embedded in a disk, with \dfn{boundary vertices} labeled $1,2,\ldots,m$ arranged on the rim of the disk in counterclockwise order, and any number of (unlabeled) \dfn{internal vertices} such that
\begin{itemize}
\item $\W$ is \dfn{trivalent}: all the boundary vertices have degree one, while all the internal vertices have degree three;
\item $\W$ is \dfn{bipartite}: the vertices (both boundary and internal) are colored white and black, with edges only between oppositely colored vertices.
\end{itemize}
We call the face of $\W$ containing the boundary vertices the \dfn{outer face}, and all other faces \dfn{internal}. We say that $\W$ is \dfn{irreducible} (or \dfn{non-elliptic}) if it has no internal faces with fewer than $6$ sides.
\end{definition}

Among all the $\mathfrak{sl}_3$-webs, the irreducible ones play a distinguished role. For instance, there are only finitely many irreducible webs with a fixed number of boundary vertices.

We will now explain how to convert a Kreweras bump diagram of a Kreweras word into a web by ``breaking apart'' its crossings.

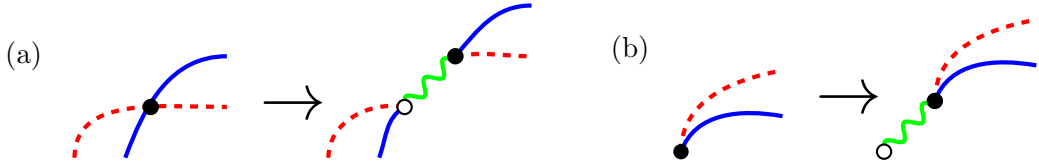
\begin{figure}
\begin{center}
\begin{tikzpicture}[scale=1.35]
\node at (-1,1){(a)};
\tikzset{->-/.style={decoration={ markings, mark=at position 0.25 with {\arrow{>}}},postaction={decorate}}};
\tikzset{snake it/.style={decorate, decoration=snake}};
\draw [ultra thick,blue] (0,0) to [out=70, in=180] (1,1);
\draw [ultra thick,red,dashed] (-0.5,0) to [out=90, in=180] (1,0.5);
\filldraw[thick, black] (0.25,0.5) circle (2pt);
\node at (1.65,0.5){\Huge $\to$};
\draw [ultra thick,blue] (2.5,0) to [out=70, in=180+30] (2.75,0.5);
\draw [ultra thick,red,dashed] (2,0) to [out=90, in=180-10] (2.75,0.5);
\draw [ultra thick,green, snake it] (2.75,0.5) to (3.25,1.0);
\draw [ultra thick,blue] (3.25,1.0) to [out=50, in=180] (4,1.5);
\draw [ultra thick,red,dashed] (3.25,1.0) to [out=10, in=180] (4,1);
\draw[thick,draw=black,fill=white] (2.75,0.5) circle (2pt);
\draw[thick,draw=black,fill=black] (3.25,1.0) circle (2pt);
\end{tikzpicture}  \quad \quad \begin{tikzpicture}[scale=1.35]
\node at (-0.5,1){(b)};
\tikzset{->-/.style={decoration={ markings, mark=at position 0.25 with {\arrow{>}}},postaction={decorate}}};
\tikzset{snake it/.style={decorate, decoration=snake}};
\draw [ultra thick,blue] (0,0) to [out=70, in=170] (1,0.35);
\draw [ultra thick,red,dashed] (0,0) to [out=90, in=190] (1,0.8);
\filldraw[black] (0,0) circle (2pt);
\node at (1.65,0.5){\Huge $\to$};
\draw [ultra thick,blue] (2.5,0.5) to [out=70, in=170] (3.5,0.85);
\draw [ultra thick,red,dashed] (2.5,0.5) to [out=90, in=190] (3.5,1.3);
\draw [ultra thick,green, snake it] (2,0) to (2.5,0.5);
\draw[thick,draw=black,fill=white]  (2,0) circle (2pt);
\draw[thick,draw=black,fill=black]  (2.5,0.5) circle (2pt);
\end{tikzpicture}
\end{center}

\caption{Breaking apart the crossings in a Kreweras bump diagram to obtain a web. In (a) we show what happens at an internal crossing, and in (b) we show what happens at a boundary crossing.} \label{fig:bump_to_web}
\end{figure}

\begin{construction}\label{construction:breaking-apart}
Let $w$ be a Kreweras word and $\D_w$ its associated Kreweras bump diagram.
We obtain a planar graph $\W_w$, embedded into a disk, together with a $3$-coloring $c_w$ of its edges as follows.

We replace each crossing of two arcs in $\D_w$ with a pair of a vertices, one white and one black, joined by a wavy {\bf a}vocado (i.e.~green) edge, as in \cref{fig:bump_to_web}.
The white vertex in this pair is ``to the left'' of the black vertex, that is, closer to the openers of $\D_w$.
We color all vertices of degree one in the resulting graph, corresponding to the openers and closers of $\D_w$, white, and keep the labels of these vertices.
Finally, the color of the non-avocado edges of $\W_w$ is inherited from $\D_w$.
\end{construction}

\begin{example} \label{ex:web}
As in \cref{ex:evac}, let $w = \A \A \B \B \C \A \C \C \B$. Recall that the Kreweras bump diagram $\D_w$ of $w$ is:
\begin{center}
\begin{tikzpicture}[scale=0.8]
\def \x {0.7};
\node at (-1*\x,0) {$\D_w=$};
\node (1) at (1*\x,0) {1};
\node (2) at (2*\x,0) {2};
\node (3) at (3*\x,0) {3};
\node (4) at (4*\x,0) {4};
\node (5) at (5*\x,0) {5};
\node (6) at (6*\x,0) {6};
\node (7) at (7*\x,0) {7};
\node (8) at (8*\x,0) {8};
\node (9) at (9*\x,0) {9};
\node at (1*\x,-0.55*\x) {$\A$};
\node at (2*\x,-0.55*\x) {$\A$};
\node at (3*\x,-0.55*\x) {$\B$};
\node at (4*\x,-0.55*\x) {$\B$};
\node at (5*\x,-0.55*\x) {$\C$};
\node at (6*\x,-0.55*\x) {$\A$};
\node at (7*\x,-0.55*\x) {$\C$};
\node at (8*\x,-0.55*\x) {$\C$};
\node at (9*\x,-0.55*\x) {$\B$};
\draw [ultra thick,blue] (1) to [out=90, in=90] (4);
\draw [ultra thick,blue] (2) to [out=90, in=90] (3);
\draw [ultra thick,blue] (6) to [out=90, in=90] (9);
\draw [ultra thick,red,dashed] (1) to [out=90, in=90] (8);
\draw [ultra thick,red,dashed] (2) to [out=90, in=90] (5);
\draw [ultra thick,red,dashed] (6) to [out=90, in=90] (7);
\end{tikzpicture}
\end{center}
Breaking apart the crossings of $\D_w$ gives the following $3$-edge-colored web:
\begin{center}
\begin{tikzpicture}[scale=0.8]
\tikzset{snake it/.style={decorate, decoration=snake}};
\def \x {0.7};
\node (1) at (1*\x,0) {1};
\node (2) at (2*\x,0) {2};
\node (3) at (3*\x,0) {3};
\node (4) at (4*\x,0) {4};
\node (5) at (5*\x,0) {5};
\node (6) at (6*\x,0) {6};
\node (7) at (7*\x,0) {7};
\node (8) at (8*\x,0) {8};
\node (9) at (9*\x,0) {9};
\draw [ultra thick,green,snake it] (1*\x,0.3) to (1.2*\x,0.8);
\draw [ultra thick,green,snake it] (2*\x,0.3) to (2.2*\x,0.8);
\draw [ultra thick,green,snake it] (6*\x,0.3) to (6.2*\x,0.8);
\draw [ultra thick,green,snake it] (2.3*\x,1.4) to (3.6*\x,1.4);
\draw [ultra thick,green,snake it] (7.1*\x,1.1) to (8.0*\x,1.1);
\draw [ultra thick,blue] (1.2*\x,0.8) to [out=90, in=180] (2.3*\x,1.4);
\draw [ultra thick,blue] (3.6*\x,1.4) to [out=0, in=90] (4);
\draw [ultra thick,blue] (2.2*\x,0.8) to [out=90, in=90] (3);
\draw [ultra thick,blue] (6.2*\x,0.8) to [out=90, in=180] (7.1*\x,1.1);
\draw [ultra thick,blue] (8.0*\x,1.1) to [out=0, in=90] (9);
\draw [ultra thick,red,dashed] (1.2*\x,0.8) to [out=90, in=110] (7.1*\x,1.1);
\draw [ultra thick,red,dashed]  (8.0*\x,1.1) to [out=-90, in=90] (8);
\draw [ultra thick,red,dashed] (2.2*\x,0.8) to [out=180, in=180] (2.3*\x,1.4);
\draw [ultra thick,red,dashed] (3.6*\x,1.4) to [out=0, in=90] (5);
\draw [ultra thick,red,dashed] (6.2*\x,0.8) to [out=90, in=90] (7);
\draw[thick,draw=black,fill=white]  (1*\x,0.3) circle (3pt);
\draw[thick,draw=black,fill=white]  (2*\x,0.3) circle (3pt);
\draw[thick,draw=black,fill=white]  (3*\x,0.3) circle (3pt);
\draw[thick,draw=black,fill=white]  (4*\x,0.3) circle (3pt);
\draw[thick,draw=black,fill=white]  (5*\x,0.3) circle (3pt);
\draw[thick,draw=black,fill=white]  (6*\x,0.3) circle (3pt);
\draw[thick,draw=black,fill=white]  (7*\x,0.3) circle (3pt);
\draw[thick,draw=black,fill=white]  (8*\x,0.3) circle (3pt);
\draw[thick,draw=black,fill=white]  (9*\x,0.3) circle (3pt);
\draw[thick,draw=black,fill=black]  (1.2*\x,0.8) circle (3pt);
\draw[thick,draw=black,fill=black]  (2.2*\x,0.8) circle (3pt);
\draw[thick,draw=black,fill=black]  (6.2*\x,0.8) circle (3pt);
\draw[thick,draw=black,fill=white]  (2.3*\x,1.4) circle (3pt);
\draw[thick,draw=black,fill=black]  (3.6*\x,1.4) circle (3pt);
\draw[thick,draw=black,fill=white]  (7.1*\x,1.1) circle (3pt);
\draw[thick,draw=black,fill=black]  (8.0*\x,1.1) circle (3pt);
\end{tikzpicture}
\end{center}
Forgetting the $3$-edge-coloring, and drawing the graph embedded in a disk, we obtain the web $\W_w$:
\begin{center}
\begin{tikzpicture}[scale=0.4]
\node at (-5.5,0) {$\W_w=$};
\draw (0,0) circle (3);
\node[label={above left:1},inner sep=0] (1) at (110:3) {};
\node[label={left:2},inner sep=0] (2) at (150:3) {};
\node[label={left:3},inner sep=0] (3) at (190:3) {};
\node[label={below left:4},inner sep=0] (4) at (230:3) {};
\node[label={below:5},inner sep=0] (5) at (270:3) {};
\node[label={below right:6},inner sep=0] (6) at (310:3) {};
\node[label={right:7},inner sep=0] (7) at (350:3) {};
\node[label={right:8},inner sep=0] (8) at (390:3) {};
\node[label={above right:9},inner sep=0] (9) at (430:3) {};
\node[inner sep=0] (A) at (170:2) {};
\node[inner sep=0] (B) at (250:2) {};
\node[inner sep=0] (C) at (330:2) {};
\node[inner sep=0] (D) at (410:2) {};
\node[inner sep=0] (E) at (110:2) {};
\node[inner sep=0] (F) at (210:0.75) {};
\node[inner sep=0] (G) at (370:0.75) {};
\draw[thick] (1)--(E)--(F)--(E)--(G);
\draw[thick] (2)--(A)--(F)--(A)--(3);
\draw[thick] (4)--(B)--(F)--(B)--(5);
\draw[thick] (6)--(C)--(G)--(C)--(7);
\draw[thick] (8)--(D)--(G)--(D)--(9);
\draw[thick,draw=black,fill=white]  (1) circle (4pt);
\draw[thick,draw=black,fill=white]  (2) circle (4pt);
\draw[thick,draw=black,fill=white]  (3) circle (4pt);
\draw[thick,draw=black,fill=white]  (4) circle (4pt);
\draw[thick,draw=black,fill=white]  (5) circle (4pt);
\draw[thick,draw=black,fill=white]  (6) circle (4pt);
\draw[thick,draw=black,fill=white]  (7) circle (4pt);
\draw[thick,draw=black,fill=white]  (8) circle (4pt);
\draw[thick,draw=black,fill=white]  (9) circle (4pt);
\draw[thick,draw=black,fill=black]  (A) circle (4pt);
\draw[thick,draw=black,fill=black]  (B) circle (4pt);
\draw[thick,draw=black,fill=black]  (C) circle (4pt);
\draw[thick,draw=black,fill=black]  (D) circle (4pt);
\draw[thick,draw=black,fill=black]  (E) circle (4pt);
\draw[thick,draw=black,fill=white]  (F) circle (4pt);
\draw[thick,draw=black,fill=white]  (G) circle (4pt);
\end{tikzpicture}
\end{center}
\end{example}

\begin{prop}\label{prop:kword_web_faces}
Let $w$ be a Kreweras word and let $(W_w, c_w)$ be the $3$-edge-colored graph obtained by \cref{construction:breaking-apart}.

Then $W_w$ is an irreducible $\mathfrak{sl}_3$-web with $3n$ boundary vertices, all of which are white.  Moreover, $W_w$ has no internal face having a multiple of four sides.

The $3$-coloring $c_w$ of the edges of $W_w$ is proper, i.e., each vertex is incident to at most one edge in each color class.

Finally, the construction is injective, that is, given $(\W_w,c_w)$ we can recover $w$: the boundary vertices incident to an avocado edge correspond to the~$\A$'s in~$w$, those incident to a blue edge correspond to~$\B$'s, and those incident to a crimson edge correspond to~$\C$'s.
\end{prop}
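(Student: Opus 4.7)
The plan is to verify the claims of the proposition in order, mostly by reading off the structure of $\W_w$ locally from \cref{construction:breaking-apart}. First I would check that $\W_w$ is a trivalent bipartite planar graph embedded in a disk with exactly $3n$ degree-one vertices on the rim: trivalence (3 at internal vertices, 1 at boundary vertices) and planarity are immediate from the local replacement of each crossing by a green edge between two new vertices; bipartiteness follows because every arc of $\D_w$, traversed from opener to closer, alternates between white and black vertices (with boundary vertices and the ``left'' vertices at internal crossings colored white, and the ``right'' vertices at internal crossings colored black), and at every boundary crossing the white degree-$1$ vertex is connected to the black internal vertex by the green edge, making all degree-$1$ vertices white. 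Properness of $c_w$ is equally immediate: each internal vertex has exactly one green edge (from the crossing it was born from) together with two arc-halves of opposite colors (the two arcs that crossed there). Injectivity of $w\mapsto(\W_w,c_w)$ follows by noting that the color of the unique edge at each boundary vertex records whether that position is an $\A$ (green), $\B$ (blue), or $\C$ (crimson).

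The heart of the proof is the face analysis. Since breaking apart is a local operation preserving planar topology, the internal faces of $\W_w$ correspond bijectively to the bounded regions of $\D_w$ in the upper half-plane. For any such region, at each crossing on its boundary the two adjacent arc-segments come from the two distinct arcs that cross there, hence are of opposite colors; so the arc-segments alternate blue/crimson around the boundary and their total $a$ is even. Each crossing on the boundary moreover contributes either a single vertex (when the region sits in a ``side'' wedge, along the axis of the green edge) or a green edge (when the region sits in a ``top'' or ``bottom'' wedge, transverse to the green edge) to the corresponding face of $\W_w$; writing $v$ and $g$ for the respective counts, the face of $\W_w$ has $v+2g$ sides with $v+g=a$ even.

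The main obstacle will be to show that $v+2g$ is never a multiple of four. A short argument handles small faces: any digon, or a $4$-sided face of the form ``two arc-edges between two green edges,'' would require two arcs to cross twice, contradicting that any two arcs of $\D_w$ meet at most once; the only remaining $4$-sided possibility comes from four arcs arranged in an alternating blue--crimson crossing cycle with the two blues (resp.\ crimsons) mutually noncrossing, and here I would argue that either the Kreweras constraint (every $\A$ position must open one arc of each color) forbids the required nesting pattern, or else the two crossings of the putative $4$-cycle that lie opposite each other along the axis perpendicular to the local green edges sit in ``top/bottom'' wedges, so that the region actually becomes a hexagon in $\W_w$ after the two green edges are added. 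For the general multiple-of-four case, the cleanest route I see is to upgrade this to the statement $v\equiv 2\pmod 4$ for every internal region and establish it by an induction on the number of arcs traversing the region's vicinity: the base case is a hexagon with $v=2$, and every additional noncrossing blue arc together with the crimson partner forced by the Kreweras constraint alters $v$ by a multiple of four.
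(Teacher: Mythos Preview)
Your setup is correct: you identify the decomposition of boundary crossings into ``side'' crossings (count $v$, contributing a single vertex) and ``transverse'' crossings (count $g$, contributing a green edge), and you correctly observe that the resulting face has $a+g=v+2g$ sides with $a=v+g$ even. But you then miss the observation that collapses the whole problem, and instead head toward a vague induction on ``$v\equiv 2\pmod 4$'' that you do not actually carry out.

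The paper's proof is a one-liner precisely because it observes (via the figure) that a $k$-sided face of $\D_w$ becomes a $(2k-2)$-sided face of $\W_w$ --- in your notation, that $v=2$ \emph{always}. Here is why. Every arc segment is a piece of a semicircle in the open upper half-plane, hence is strictly $x$-monotone; so the local extrema of $x$ on $\partial F$ occur only at crossings. Moreover, any vertical line meets each internal face in at most one interval: if $(x_0,y_1),(x_0,y_2)\in F$ were separated by an arc $\alpha$ at height between $y_1$ and $y_2$, then one point lies inside and the other outside the closed semidisk bounded by $\alpha$ and the $x$-axis, so any path in the upper half-plane between them crosses $\alpha$, contradicting that they lie in the same face. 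Thus $\partial F$ has exactly one local minimum of $x$ (the leftmost crossing, where $F$ is in the ``toward both closers'' wedge, contributing a single black vertex) and one local maximum (the rightmost crossing, contributing a single white vertex). Every other crossing has $F$ in a top/bottom wedge and contributes a green edge. Hence $v=2$, $g=k-2$, the face has $2k-2$ sides, and since $k$ is even this is $\equiv 2\pmod 4$.

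So your case analysis for digons and $4$-gons, and the proposed induction, are all unnecessary. Replace them with the single observation $v=2$ and the proof is complete.
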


\begin{figure}
\begin{center}
\begin{tikzpicture}
\node at (0,0) {\begin{tikzpicture}[scale=0.6]
\draw[ultra thick,blue] (0,0) to[out=45,in=180-45] (2,0);
\draw[ultra thick,red,dashed] (2,0) to[out=45,in=180-45] (4,0);
\draw[ultra thick,blue] (4,0) to[out=45,in=180-45] (6,0);
\draw[ultra thick,red,dashed] (6,0) to[out=45,in=180-45] (8,0);
\draw[ultra thick,red,dashed] (0,0) to[out=45,in=180+20] (4,2);
\draw[ultra thick,blue] (4,2) to[out=-20,in=180-45] (8,0);
\end{tikzpicture}};
\node at (3,0) {\Huge $\to$};
\node at (6,0) {\begin{tikzpicture}[scale=0.6]
\tikzset{snake it/.style={decorate, decoration=snake}};
\draw[ultra thick,green,snake it] (-0.5,-0.5) to (0,0);
\draw[ultra thick,green,snake it] (2-0.5,0) to (2+0.5,0);
\draw[ultra thick,green,snake it] (4-0.5,0) to (4+0.5,0);
\draw[ultra thick,green,snake it] (6-0.5,0) to (6+0.5,0);
\draw[ultra thick,green,snake it] (8+0.5,-0.5) to (8,0);
\draw[ultra thick,green,snake it] (4-0.5,2) to (4+0.5,2);
\draw[ultra thick,blue] (0,0) to[out=45,in=180-45] (2-0.5,0);
\draw[ultra thick,red,dashed] (2+0.5,0) to[out=45,in=180-45] (4-0.5,0);
\draw[ultra thick,blue] (4+0.5,0) to[out=45,in=180-45] (6-0.5,0);
\draw[ultra thick,red,dashed] (6+0.5,0) to[out=45,in=180-45] (8,0);
\draw[ultra thick,red,dashed] (0,0) to[out=45,in=180+20] (4-0.5,2);
\draw[ultra thick,blue] (4+0.5,2) to[out=-20,in=180-45] (8,0);
\draw[thick,draw=black,fill=black] (0,0) circle (4pt);
\draw[thick,draw=black,fill=white] (2-0.5,0) circle (4pt);
\draw[thick,draw=black,fill=black] (2+0.5,0) circle (4pt);
\draw[thick,draw=black,fill=white] (4-0.5,0) circle (4pt);
\draw[thick,draw=black,fill=black] (4+0.5,0) circle (4pt);
\draw[thick,draw=black,fill=white] (6-0.5,0) circle (4pt);
\draw[thick,draw=black,fill=black] (6+0.5,0) circle (4pt);
\draw[thick,draw=black,fill=white] (8,0) circle (4pt);
\draw[thick,draw=black,fill=white] (4-0.5,2) circle (4pt);
\draw[thick,draw=black,fill=black] (4+0.5,2) circle (4pt);
\end{tikzpicture}};
\end{tikzpicture}
\end{center}
\caption{For the proof of \cref{prop:kword_web_faces}: how faces of $\D_w$ correspond to faces of $\W_w$.} \label{fig:internal_faces}
\end{figure}

\begin{proof}
The only non-trivial claim is that the number of sides of any face cannot be a multiple of $4$. As depicted in~\cref{fig:internal_faces}, internal faces of $\D_w$ with $k$ sides correspond to internal faces of $\W_w$ with $2k-2$ sides. Since $\D_w$ only has crossings between arcs of different colors, the number of sides of any internal face of $\D_w$ is even.
\end{proof}

The web $\W_w$ without its $3$-edge-coloring is not quite enough to recover $w$.  However, as we now explain, it gives information equivalent to the permutation~$\sigma_w$.
In fact, we can associate a permutation to any $\mathfrak{sl}_3$-web by taking trips in the web, similar to what we did in \cref{sec:proof} for Kreweras bump diagrams.

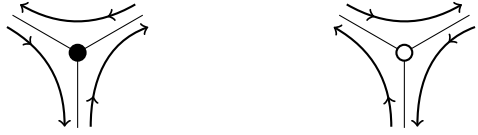
\begin{figure}
\begin{center}
\begin{tikzpicture}
\tikzset{->-/.style={decoration={ markings, mark=at position 0.25 with {\arrow{>}}},postaction={decorate}}};
\draw (0,0) to (30:1);
\draw (0,0) to (150:1);
\draw (0,0) to (270:1);
\draw[thick,draw=black,fill=black]  (0,0) circle (3pt);
\draw[thick,->,->-]  (280:1) to [out=90,in=200] (20:1);
\draw[thick,->,->-]  (40:1) to [out=210,in=150+180] (140:1);
\draw[thick,->,->-]  (160:1) to [out=150+180,in=90] (260:1);
\end{tikzpicture} \qquad \qquad \qquad \begin{tikzpicture}
\tikzset{->-/.style={decoration={ markings, mark=at position 0.25 with {\arrow{>}}},postaction={decorate}}};
\draw (0,0) to (30:1);
\draw (0,0) to (150:1);
\draw (0,0) to (270:1);
\draw[thick,draw=black,fill=white]  (0,0) circle (3pt);
\draw[thick,->,->-]  (20:1) to [out=200,in=90] (280:1) ;
\draw[thick,->,->-]  (140:1) to [out=150+180,in=210] (40:1);
\draw[thick,->,->-]  (260:1) to [out=90,in=150+180] (160:1);
\end{tikzpicture}
\end{center}
\caption{The rules of the road when taking a trip in a web.} \label{fig:rules_road_webs}
\end{figure}

\begin{definition} \label{def:web_trip_perm}
Let $\W$ be an $\mathfrak{sl}_3$-web with $m$ boundary vertices. The \dfn{trip permutation} of $\W$, denoted $\mathrm{trip}_{\W} \in \mathfrak{S}_m$, is obtained as follows. For $1\leq i \leq m$ we take a \dfn{trip in $\W$ starting at $i$}. To do this, we start by walking from boundary vertex $i$ along the unique edge incident to it. When we come to any internal vertex in $\W$, we continue our trip by following the \dfn{rules of the road}:
\begin{itemize}
\item if the vertex is black, we \dfn{turn right}, i.e., we walk out along the next edge counterclockwise from where we came in;
\item if the vertex is white, we \dfn{turn left}, i.e., we walk out along the next edge clockwise from where we came in.
\end{itemize}
These rules of the road are depicted in \cref{fig:rules_road_webs}. We stop our trip when we reach a boundary vertex. If $j$ is the boundary vertex we reach from the trip starting at $i$, then we set $\mathrm{trip}_{\W}(i)  \coloneqq  j$.
\end{definition}

That $\mathrm{trip}_{\W}$ is genuinely a permutation again follows from the fact that the rules of the road around any vertex locally permute the entry and exit points.

Our reason for considering trip permutations is the follow proposition:

\begin{prop} \label{prop:trip_defs_agree}
Let $w$ be a Kreweras word. Then $\sigma_w= \mathrm{trip}_{\W}$.
\end{prop}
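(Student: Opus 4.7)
The plan is to show that a trip in $\D_w$ and a trip in $\W_w$ starting at the same boundary vertex visit corresponding edges in the same order, and so terminate at the same boundary vertex. The correspondence is furnished by \cref{construction:breaking-apart}: each arc segment of $\D_w$ between consecutive crossings (or between a boundary vertex and a crossing) becomes a non-avocado edge of $\W_w$, while each crossing becomes either a white--black pair joined by an avocado edge (internal crossing) or a white boundary vertex joined by avocado to a new internal black vertex (boundary crossing). Along each arc segment the two trips coincide trivially, so the entire statement reduces to checking locally that the rules of the road on $\D_w$ in \cref{fig:rules_road} agree with the composition of turns at the white--black pair in $\W_w$ using the rules of \cref{fig:rules_road_webs}.

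For an internal crossing of arcs $(a,b)$ and $(c,d)$ with $a \leq c < b < d$, a direct inspection of the planar picture in \cref{fig:bump_to_web}(a) shows that the cyclic counterclockwise orders at the replacement vertices $W$ (white) and $B$ (black) are $(\text{avocado},a,c)$ and $(\text{avocado},b,d)$, respectively. Applying ``turn left at white'' (next clockwise edge) and ``turn right at black'' (next counterclockwise edge) to these orders then yields exactly the four transitions of \cref{fig:rules_road}(a):
\begin{itemize}
\item in-$a$ at $W \to$ out-avocado, then in-avocado at $B \to$ out-$b$, matching $a \to b$;
\item in-$c$ at $W \to$ out-$a$, matching $c \to a$;
\item in-$b$ at $B \to$ out-$d$, matching $b \to d$;
\item in-$d$ at $B \to$ out-avocado, then in-avocado at $W \to$ out-$c$, matching $d \to c$.
\end{itemize}
The boundary-crossing case is essentially the same: the shared $\A$-opener becomes a degree-one white boundary vertex with only an avocado edge to a new black internal vertex, whose three edges have cyclic counterclockwise order $(\text{avocado},b,d)$. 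The three boundary transitions of \cref{fig:rules_road}(b) --- including the starting convention that a trip from an $\A$ heads along its nearer arc --- are matched by the single turn-right rule at this black vertex, noting that the nearer arc indeed corresponds to the first non-avocado edge encountered counterclockwise from the avocado at $B$.

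The main obstacle will be cleanly reading off the cyclic orders at $W$ and $B$ from the planar embedding, and in particular the book-keeping for the two ``straight-through'' cases of an internal crossing (entering from $a$ or from $d$), which require transiting both vertices of the pair and whose two turns must compose to preserve direction in the sense of $\D_w$. Once these local correspondences are in hand, the identity $\sigma_w = \mathrm{trip}_{\W_w}$ follows by concatenating local matches along the full trip.
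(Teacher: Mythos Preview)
Your proposal is correct and takes essentially the same approach as the paper's proof, which simply says to check locally at each crossing that the rules of the road in $\D_w$ (\cref{fig:rules_road}) match the rules of the road in $\W_w$ (\cref{fig:rules_road_webs}) via the replacement in \cref{fig:bump_to_web}. You have carried out exactly this local check in explicit detail, correctly identifying the cyclic orders at the white and black replacement vertices and verifying all four transitions (including the two that transit the avocado edge); the only minor slip is writing $a \le c$ in the ``internal crossing'' case when you mean $a < c$, since $a = c$ is precisely the boundary case you treat separately.
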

\begin{proof}
This is simply a matter of checking that locally at a crossing of arcs, the rules of the road for trips in the Kreweras bump diagram $\D_w$ agree with the rules of the road for the trips in the web $\W_w$. And to do that, we just need to look at \cref{fig:rules_road,fig:bump_to_web,fig:rules_road_webs}.
\end{proof}

The notion of trip permutations is due to Postnikov~\cite{postnikov2006total}, and comes from his theory of plabic graphs.
A \dfn{plabic (``planar bicolored'') graph} is a planar graph, embedded in a disk, whose internal vertices are colored black or white, and whose boundary vertices have degree one.
There are some differences between plabic graphs and $\mathfrak{sl}_3$-webs:
\begin{itemize}
\item the boundary vertices of a plabic graph are not colored;
\item the internal vertices of a plabic graph need not be trivalent;
\item the coloring of internal vertices of a plabic graph does not have to be proper, i.e., vertices of the same color may be adjacent.
\end{itemize}
Except for the small technicality about boundary vertices being colored, an $\mathfrak{sl}_3$-web is a special case of a plabic graph. Postnikov~\cite[\S13]{postnikov2006total} defined trip permutations for plabic graphs in exactly the same way as we have done for webs in \cref{def:web_trip_perm} above: turn right at black vertices and left at white vertices.\footnote{Technically Postnikov considered \dfn{decorated} permutations, which have their fixed points colored either black or white. None of the trip permutations we obtain from irreducible webs will have fixed points (see the proof of \cref{lem:trip_perm_equal}), so this issue of fixed point decoration will not concern us.}

If $\W$ and $\W'$ are two $\mathfrak{sl}_3$-webs with $m$ boundary vertices, and they differ only in the way their boundary vertices are colored, then $\mathrm{trip}_{\W}=\mathrm{trip}_{\W'}$, since the color of boundary vertices does not enter into the definition of trip permutations in any way. However, note that the color of any boundary vertex which is adjacent to an internal vertex has its color determined by the bipartiteness condition. Hence, if $\W$ and $\W'$ differ only in the way their boundary vertices are colored, then $\W'$ is obtained from $\W$ by swapping the colors of pairs of oppositely colored, adjacent boundary vertices. In particular, if $\W$ has all its boundary vertices the same color, then there is no web that differs from $\W'$ only in the way its boundary vertices are colored.

We now explain how Postnikov's work implies that for \emph{irreducible} webs, the situation discussed in the previous paragraph is the only way that trip permutations can coincide.

\begin{lemma} \label{lem:trip_perm_equal}
Let $\W$ and $\W'$ be irreducible $\mathfrak{sl}_3$-webs with $m$ boundary vertices. Suppose that $\mathrm{trip}_{\W} = \mathrm{trip}_{\W'}$. Then $\W$ and $\W'$ differ at most in the way their boundary vertices are colored. In particular, if all the boundary vertices of $\W$ are the same color, then $\W=\W'$.
\end{lemma}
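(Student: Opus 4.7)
The plan is to invoke Postnikov's classification of (reduced) plabic graphs~\cite{postnikov2006total} according to their trip permutation. A fundamental theorem in that theory says that any two reduced plabic graphs with the same decorated trip permutation are connected by a sequence of three local moves: the square move (M1), unicolored edge contraction/uncontraction (M2), and middle vertex insertion/removal (M3). Since $\mathrm{trip}_{\W} = \mathrm{trip}_{\W'}$ and boundary vertex colors play no role in the definition of the trip permutation, I would regard $\W$ and $\W'$ as plabic graphs (forgetting the colors of boundary vertices) and argue that they must be move-equivalent, and then that no non-trivial move can actually be performed.

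The first step of the plan is to verify that an irreducible $\mathfrak{sl}_3$-web, regarded as a plabic graph, is reduced in Postnikov's sense. The key features I would use are trivalence and bipartiteness at internal vertices together with the absence of small internal faces. Concretely, I would check that no trip is a closed loop in the interior, that no trip begins and ends at the same boundary vertex, and that no two distinct trips share two crossings oriented in the same direction; in each case, such a bad configuration would produce (after a standard argument tracing faces bounded by trip segments) an internal face with fewer than six sides, contradicting irreducibility.

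The second step is to rule out each of Postnikov's moves in the setting of irreducible $\mathfrak{sl}_3$-webs. The square move (M1) requires an internal 4-cycle, contradicting irreducibility. Unicolored edge contraction (M2) requires an edge between two internal vertices of the same color, contradicting bipartiteness. Middle vertex insertion/removal (M3) changes the degree of an internal vertex between $2$ and $3$, which is incompatible with the trivalent condition on $\mathfrak{sl}_3$-webs. Hence the move-equivalence class of an irreducible web (as an uncolored plabic graph) contains only itself, and consequently $\W$ and $\W'$ coincide as uncolored plabic graphs.

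The third and final step is to recover the statement about boundary vertex colors. Once we know that $\W$ and $\W'$ agree as uncolored plabic graphs, they can differ only in the colors of their boundary vertices. But every boundary vertex adjacent to an internal vertex has its color forced by bipartiteness of the internal structure, so the freedom is confined to boundary vertices with no adjacent internal vertex (this only arises in degenerate situations, e.g.\ a boundary vertex joined to another boundary vertex). In particular, if all the boundary vertices of $\W$ already have the same color, then the colors of the boundary vertices of $\W'$ are completely forced by bipartiteness and we conclude $\W=\W'$. The main obstacle I expect is the first step: giving a self-contained verification that irreducibility guarantees reducedness in Postnikov's sense, rather than merely citing it.
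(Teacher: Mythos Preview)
Your proposal is correct and follows essentially the same route as the paper: both invoke Postnikov's theorem that reduced plabic graphs with the same trip permutation are move-equivalent, and then check that none of the moves take one irreducible $\mathfrak{sl}_3$-web to a different one. The paper streamlines your first step by simply observing that no reductions can ever be applied to an irreducible web (so it is automatically reduced), and is slightly more careful in your second step: middle-vertex insertion \emph{can} be applied, producing plabic graphs with $2$-valent vertices, so the move-equivalence class is not literally a singleton; the point is rather that it contains no other \emph{web}.
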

\begin{proof}
Postnikov~\cite[\S12]{postnikov2006total} defined certain transformations of plabic graphs he called \dfn{moves} and \dfn{reductions}. If $\W$ is an irreducible $\mathfrak{sl}_3$-web (viewed as a plabic graph), then the only moves or reductions we can apply to it are ``trivial'' moves which add $2$-valent vertices by subdividing an edge, or remove such $2$-valent vertices by un-subdividing edges. (Crucially, the fact that all internal faces have at least $6$ sides means we will never be able to carry out a \dfn{square move}, which is the fundamental, nontrivial move in the theory.) In particular, we will never be able to apply a reduction to $\W$, so $\W$ is \dfn{reduced}. Then \cite[Theorem 13.2(4)]{postnikov2006total} says that $\mathrm{trip}_{\W}$ has no fixed points, so we don't have to worry about the issue of decorated fixed points. Finally, a key result~\cite[Theorem 13.4]{postnikov2006total} from Postnikov's paper says that two reduced plabic graphs have the same trip permutation if and only if they are related via a series of moves. Since, as mentioned, the only moves we can apply either add or remove $2$-valent vertices, we will not be able to reach any other web than $\W$ via these moves. Hence, Postnikov's result tells us that any other web with the same trip permutation as $\W$ is equal to $\W$ -- except in the way the boundary vertices are colored, which the plabic graph story does not see.
\end{proof}

\Cref{lem:trip_perm_equal} lets us apply our knowledge about how $\pro$ and $\evac$ affect $\sigma_w$ to understand how they affect $\W_w$ (\cref{thm:intro_web} from \cref{sec:intro}). We just need to define the corresponding web operations.

\begin{definition}
Let $\W$ be an $\mathfrak{sl}_3$-web with $m$ boundary vertices. The \dfn{rotation} of~$\W$, denote $\rot(\W)$, is obtained from $\W$ be relabeling its vertices according to the inverse long cycle $(m,m-1,\ldots,2,1) \in \Sym_m$. The \dfn{flip} of $\W$, denoted $\flip(\W)$, is obtained from $\W$ by drawing a chord in the disk separating $1$ and $m$, reflecting $\W$ across this chord, and then relabeling its vertices according to the longest element $[m,m-1,\ldots,1] \in \Sym_m$.
\end{definition}

\begin{thm} \label{thm:web}
Let $w$ be a Kreweras word. Then,
\begin{enumerate}[(a)]
\item $\W_{\pro(w)} = \rot(\W_{w})$;
\item $\W_{\evac(w)} = \flip(\W_{w})$.
\end{enumerate}
\end{thm}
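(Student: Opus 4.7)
The plan is to leverage \cref{lem:trip_perm_equal}, which says that an irreducible $\mathfrak{sl}_3$-web with all-white boundary is determined by its trip permutation. The four webs in question---$\W_{\pro(w)}$, $\W_{\evac(w)}$, $\rot(\W_w)$, and $\flip(\W_w)$---all satisfy these hypotheses: the first two by \cref{prop:kword_web_faces}, and the latter two because rotation and reflection preserve the internal combinatorial structure as well as the color of each boundary vertex. So the entire task reduces to matching trip permutations.

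The key auxiliary claim to establish is that for any $\mathfrak{sl}_3$-web $\W$, one has $\mathrm{trip}_{\rot(\W)} = \rot(\mathrm{trip}_\W)$ and $\mathrm{trip}_{\flip(\W)} = \rc(\mathrm{trip}_\W^{-1})$. For rotation, I would observe that the underlying graph and the trip-following procedure are unchanged; only the labels $1,\ldots,3n$ have shifted by one. Hence a trip in $\rot(\W)$ starting at vertex $i$ traces the same path as the trip in $\W$ starting at vertex $i+1$ (indices mod $3n$), and terminates at the vertex whose new label is one less than its old one, giving $\rot(\mathrm{trip}_\W)$. For the flip, the crucial observation is that reflecting the disk reverses the cyclic orientation at every internal vertex: ``next counterclockwise'' becomes ``next clockwise.'' Consequently, at a black vertex the rule ``turn right'' turns into ``turn left,'' and symmetrically at white vertices. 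Globally swapping the two turning rules corresponds to running each trip backwards, so the trip permutation of the reflected-but-unrelabeled web is $\mathrm{trip}_\W^{-1}$. Relabeling boundary vertices by $w_0$ then conjugates this to $\rc(\mathrm{trip}_\W^{-1})$.

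Once the auxiliary claim is in hand, both parts of the theorem are immediate. For part (a), \cref{prop:trip_defs_agree} and \cref{lem:key}(a) give
\[\mathrm{trip}_{\rot(\W_w)} = \rot(\mathrm{trip}_{\W_w}) = \rot(\sigma_w) = \sigma_{\pro(w)} = \mathrm{trip}_{\W_{\pro(w)}},\]
and \cref{lem:trip_perm_equal} concludes $\rot(\W_w) = \W_{\pro(w)}$. For part (b), \cref{prop:trip_defs_agree} and \cref{lem:perm_evac}(a) give
\[\mathrm{trip}_{\flip(\W_w)} = \rc(\mathrm{trip}_{\W_w}^{-1}) = \rc(\sigma_w^{-1}) = \sigma_{\evac(w)} = \mathrm{trip}_{\W_{\evac(w)}},\]
and \cref{lem:trip_perm_equal} finishes the argument.

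The main obstacle I anticipate is making rigorous the assertion that ``swap the rules of the road globally'' equals ``reverse every trip.'' Locally at each trivalent vertex one must confirm that the pair of adjacent edges connecting an incoming to an outgoing edge under the original rule is exactly the pair connecting them under the reversed rule, with the roles of ``in'' and ``out'' exchanged; this boils down to a small case check using the fact that among three edges incident to a vertex, ``next counterclockwise'' and ``next clockwise'' are mutually inverse cyclic operations. Everything else in the proof is bookkeeping with the labeling conventions for $\rot$ and $\rc$.
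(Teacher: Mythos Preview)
Your proposal is correct and follows essentially the same approach as the paper: match trip permutations using \cref{prop:trip_defs_agree}, \cref{lem:key}, and \cref{lem:perm_evac}, verify that $\mathrm{trip}_{\rot(\W)}=\rot(\mathrm{trip}_\W)$ and $\mathrm{trip}_{\flip(\W)}=\rc(\mathrm{trip}_\W^{-1})$, and conclude via \cref{lem:trip_perm_equal}. In fact you supply more detail than the paper does on the flip case (the paper simply calls it ``straightforward to verify''), and your explanation via orientation reversal is exactly the right one.
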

\begin{proof}
We have $\mathrm{trip}_{\W_w} = \sigma_w$ because of \cref{prop:trip_defs_agree}. Thus \cref{lem:key,lem:perm_evac} imply $\mathrm{trip}_{\W_{\pro(w)}}=\rot(\mathrm{trip}_{\W_w})$ and $\mathrm{trip}_{\W_{\evac(w)}}=\rc(\mathrm{trip}_{\W_w}^{-1})$. For any $\mathfrak{sl}_3$-web~$\W$, it is straightforward to verify that $\mathrm{trip}_{\rot(\W)}=\rot(\mathrm{trip}_{\W})$ and $\mathrm{trip}_{\flip(\W)}=\rc(\mathrm{trip}_{\W}^{-1})$. But then thanks to \cref{lem:trip_perm_equal}, we know that $\rot(\W_w)$ and $\flip(\W_w)$ are the \emph{only} irreducible $\mathfrak{sl}_3$-webs with trip permutations equal to  $\mathrm{trip}_{\W_{\pro(w)}}$ and $\mathrm{trip}_{\W_{\evac(w)}}$. Therefore, we must have $\W_{\pro(w)}=\rot(\W_w)$ and $\W_{\evac(w)}=\flip(\W_w)$, as claimed.
\end{proof}

\begin{example}
As in \cref{ex:web}, let $w = \A \A \B \B \C \A \C \C \B$. We saw in \cref{ex:evac} that $\pro(w) = \evac(w) = w'$ where $w' = \A \B \A \C \A \C \C \B \B$. Recall that the Kreweras bump diagram $\D_{w'}$ of $w'$ is:
\begin{center}
\begin{tikzpicture}[scale=0.8]
\def \x {0.7};
\node at (-1*\x,0) {$\D_{w'}=$};
\node (1) at (1*\x,0) {1};
\node (2) at (2*\x,0) {2};
\node (3) at (3*\x,0) {3};
\node (4) at (4*\x,0) {4};
\node (5) at (5*\x,0) {5};
\node (6) at (6*\x,0) {6};
\node (7) at (7*\x,0) {7};
\node (8) at (8*\x,0) {8};
\node (9) at (9*\x,0) {9};
\node at (1*\x,-0.55*\x) {$\A$};
\node at (2*\x,-0.55*\x) {$\B$};
\node at (3*\x,-0.55*\x) {$\A$};
\node at (4*\x,-0.55*\x) {$\C$};
\node at (5*\x,-0.55*\x) {$\A$};
\node at (6*\x,-0.55*\x) {$\C$};
\node at (7*\x,-0.55*\x) {$\C$};
\node at (8*\x,-0.55*\x) {$\B$};
\node at (9*\x,-0.55*\x) {$\B$};
\draw [ultra thick,blue] (1) to [out=90, in=90] (2);
\draw [ultra thick,blue] (3) to [out=90, in=90] (9);
\draw [ultra thick,blue] (5) to [out=90, in=90] (8);
\draw [ultra thick,red,dashed] (1) to [out=90, in=90] (7);
\draw [ultra thick,red,dashed] (3) to [out=90, in=90] (4);
\draw [ultra thick,red,dashed] (5) to [out=90, in=90] (6);
\end{tikzpicture}
\end{center}
Breaking apart the crossings of $\D_{w'}$ gives the following $3$-edge-colored web:
\begin{center}
\begin{tikzpicture}[scale=0.8]
\tikzset{snake it/.style={decorate, decoration=snake}};
\def \x {0.7};
\node (1) at (1*\x,0) {1};
\node (2) at (2*\x,0) {2};
\node (3) at (3*\x,0) {3};
\node (4) at (4*\x,0) {4};
\node (5) at (5*\x,0) {5};
\node (6) at (6*\x,0) {6};
\node (7) at (7*\x,0) {7};
\node (8) at (8*\x,0) {8};
\node (9) at (9*\x,0) {9};
\draw [ultra thick,green,snake it] (1*\x,0.3) to (1.2*\x,0.8);
\draw [ultra thick,green,snake it] (3*\x,0.3) to (3.2*\x,0.8);
\draw [ultra thick,green,snake it] (5*\x,0.3) to (5.2*\x,0.8);
\draw [ultra thick,green,snake it] (4.3*\x,1.9) to (5.6*\x,1.9);
\draw [ultra thick,green,snake it] (6.1*\x,1.1) to (7.0*\x,1.1);
\draw [ultra thick,blue] (1.2*\x,0.8) to [out=90, in=90] (2);
\draw [ultra thick,blue] (3.2*\x,0.8) to [out=90, in=200] (4.3*\x,1.9);
\draw [ultra thick,blue] (5.2*\x,0.8) to [out=90, in=180] (6.1*\x,1.1);
\draw [ultra thick,blue] (7.0*\x,1.1) to [out=0, in=90] (8);
\draw [ultra thick,blue] (5.6*\x,1.9) to [out=0, in=90] (9);
\draw [ultra thick,red,dashed] (1.2*\x,0.8) to [out=90, in=180] (4.3*\x,1.9);
\draw [ultra thick,red,dashed]  (7.0*\x,1.1) to [out=-90, in=90] (7);
\draw [ultra thick,red,dashed] (3.2*\x,0.8) to [out=90, in=90] (4);
\draw [ultra thick,red,dashed] (5.2*\x,0.8) to [out=90, in=90] (6);
\draw [ultra thick,red,dashed] (5.6*\x,1.9) to [out=-10, in=100] (6.1*\x,1.1);
\draw[thick,draw=black,fill=white]  (1*\x,0.3) circle (3pt);
\draw[thick,draw=black,fill=white]  (2*\x,0.3) circle (3pt);
\draw[thick,draw=black,fill=white]  (3*\x,0.3) circle (3pt);
\draw[thick,draw=black,fill=white]  (4*\x,0.3) circle (3pt);
\draw[thick,draw=black,fill=white]  (5*\x,0.3) circle (3pt);
\draw[thick,draw=black,fill=white]  (6*\x,0.3) circle (3pt);
\draw[thick,draw=black,fill=white]  (7*\x,0.3) circle (3pt);
\draw[thick,draw=black,fill=white]  (8*\x,0.3) circle (3pt);
\draw[thick,draw=black,fill=white]  (9*\x,0.3) circle (3pt);
\draw[thick,draw=black,fill=black]  (1.2*\x,0.8) circle (3pt);
\draw[thick,draw=black,fill=black]  (3.2*\x,0.8) circle (3pt);
\draw[thick,draw=black,fill=black]  (5.2*\x,0.8) circle (3pt);
\draw[thick,draw=black,fill=white]  (4.3*\x,1.9) circle (3pt);
\draw[thick,draw=black,fill=black]  (5.6*\x,1.9) circle (3pt);
\draw[thick,draw=black,fill=white]  (6.1*\x,1.1) circle (3pt);
\draw[thick,draw=black,fill=black]  (7.0*\x,1.1) circle (3pt);
\end{tikzpicture}
\end{center}
Forgetting the $3$-edge-coloring, and drawing the graph embedded in a disk, we see that the web $\W_w$ is:
\begin{center}
\begin{tikzpicture}[scale=0.4]
\node at (-5.5,0) {$\W_{w'}=$};
\draw (0,0) circle (3);
\node[label={above right:9},inner sep=0] (1) at (70:3) {};
\node[label={above left:1},inner sep=0] (2) at (110:3) {};
\node[label={left:2},inner sep=0] (3) at (150:3) {};
\node[label={left:3},inner sep=0] (4) at (190:3) {};
\node[label={below left:4},inner sep=0] (5) at (230:3) {};
\node[label={below:5},inner sep=0] (6) at (270:3) {};
\node[label={below right:6},inner sep=0] (7) at (310:3) {};
\node[label={right:7},inner sep=0] (8) at (350:3) {};
\node[label={right:8},inner sep=0] (9) at (390:3) {};
\node[inner sep=0] (A) at (130:2) {};
\node[inner sep=0] (B) at (210:2) {};
\node[inner sep=0] (C) at (290:2) {};
\node[inner sep=0] (D) at (370:2) {};
\node[inner sep=0] (E) at (70:2) {};
\node[inner sep=0] (F) at (170:0.75) {};
\node[inner sep=0] (G) at (330:0.75) {};
\draw[thick] (1)--(E)--(F)--(E)--(G);
\draw[thick] (2)--(A)--(F)--(A)--(3);
\draw[thick] (4)--(B)--(F)--(B)--(5);
\draw[thick] (6)--(C)--(G)--(C)--(7);
\draw[thick] (8)--(D)--(G)--(D)--(9);
\draw[thick,draw=black,fill=white]  (1) circle (4pt);
\draw[thick,draw=black,fill=white]  (2) circle (4pt);
\draw[thick,draw=black,fill=white]  (3) circle (4pt);
\draw[thick,draw=black,fill=white]  (4) circle (4pt);
\draw[thick,draw=black,fill=white]  (5) circle (4pt);
\draw[thick,draw=black,fill=white]  (6) circle (4pt);
\draw[thick,draw=black,fill=white]  (7) circle (4pt);
\draw[thick,draw=black,fill=white]  (8) circle (4pt);
\draw[thick,draw=black,fill=white]  (9) circle (4pt);
\draw[thick,draw=black,fill=black]  (A) circle (4pt);
\draw[thick,draw=black,fill=black]  (B) circle (4pt);
\draw[thick,draw=black,fill=black]  (C) circle (4pt);
\draw[thick,draw=black,fill=black]  (D) circle (4pt);
\draw[thick,draw=black,fill=black]  (E) circle (4pt);
\draw[thick,draw=black,fill=white]  (F) circle (4pt);
\draw[thick,draw=black,fill=white]  (G) circle (4pt);
\end{tikzpicture}
\end{center}
Comparing with \cref{ex:web}, we can see that $\W_{w'} = \rot(\W_w) = \flip(\W_w)$, in agreement with \cref{thm:web}.
\end{example}

It is also possible to describe how promotion and evacuation affect the $3$-edge-coloring $c_w$ (briefly: we ``swap'' colors of edges along trips), but we will not go into details about that here.

However, a question we will answer in the following subsection is: which webs $\W$ are equal to $\W_w$ for some Kreweras word $w$? As we will see, the restriction coming from \cref{prop:kword_web_faces} is the only restriction.

\subsection{Kreweras webs}

\begin{definition}
A \dfn{Kreweras web} is an irreducible $\mathfrak{sl}_3$-web such that all boundary vertices are white and there are no internal faces with a multiple of $4$ sides.
\end{definition}

We note that a simple counting argument shows that any $\mathfrak{sl}_3$-web with all white boundary vertices has a multiple of $3$ boundary vertices.

We have already seen from \cref{prop:kword_web_faces} that any web $\W_w$ corresponding to a Kreweras word $w$ must be a Kreweras web. Our goal in this subsection is to show that all Kreweras webs arise this way.

\begin{thm}\label{thm:kreweras_webs}
  Let $\W$ be an $\mathfrak{sl}_3$-web. Then there is a Kreweras word~$w$ for which $\W=\W_w$ if and only $\W$ is a Kreweras web. Moreover, if $\W$ is a Kreweras web, then the number of Kreweras words $w$ for which $\W=\W_w$ is $2^{\kappa(\W)}$, where $\kappa(\W)$ is the number of connected components of $\W$.
\end{thm}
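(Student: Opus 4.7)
The forward direction (every $\W_w$ is a Kreweras web) is exactly \cref{prop:kword_web_faces}, so we concentrate on the converse and enumeration. My plan is to work through the trip permutation: by \cref{prop:trip_defs_agree} and \cref{lem:trip_perm_equal}, any Kreweras word $w$ with $\W_w = \W$ satisfies $\sigma_w = \mathrm{trip}_{\W}$, and so by \cref{prop:basic_perm_props}~\eqref{item:anti_exceedances} the $\A$-positions of $w$ are forced to equal $\{i : \sigma^{-1}(i) > i\}$. Counting such $w$ therefore reduces to counting valid assignments of $\B$'s and $\C$'s to the remaining positions.

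For the lower bound of $2^{\kappa(\W)}$, my plan is to show that, starting from any Kreweras word $w$ with $\W_w = \W$, swapping $\B \leftrightarrow \C$ at every boundary position belonging to a chosen union of connected components of $\W$ produces another Kreweras word with the same web. The web is unchanged because the swap merely relabels blue arcs as crimson arcs (and vice versa) within the affected components of the Kreweras bump diagram, leaving the underlying planar graph and all of its crossings intact. To verify preservation of the Kreweras property, I will use the key inequality that within each connected component $K$ and each prefix of length $p$, the number of $\B$'s of $w$ in $K$ at positions $\leq p$ is at most the number of $\A$'s in $K$ at positions $\leq p$: every such $\B$ is the closer of a blue arc whose $\A$-opener lies in $K$ at an earlier position, using that arcs (as sequences of edges traversing crossings via avocado edges) stay inside a single connected component. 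Summing over components gives $\#\A \geq \#\B$ and $\#\A \geq \#\C$ throughout the word, even after the swap.

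For the matching upper bound, I plan to argue that if $w$ and $w'$ are Kreweras words with $\W_w = \W_{w'} = \W$, then their difference is necessarily such a union of full-component swaps. Both words have the same $\A$-positions; the noncrossing blue matching of a Kreweras word is uniquely determined by its $\A$- and $\B$-positions (via the Dyck-word/noncrossing-matching bijection), and likewise for crimson. A partial $\B/\C$ swap within a single component $K$ would change the noncrossing matchings inside $K$, altering the pattern of arc crossings and hence the web itself, contradicting $\W_w = \W_{w'}$. This yields exactly $2^{\kappa(\W)}$ Kreweras words per Kreweras web.

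The hard part will be existence: verifying that every Kreweras web arises as $\W_w$ for at least one Kreweras word, so that the counting argument is not vacuous. My plan is to construct such a $w$ directly from $\W$ by producing a compatible $3$-edge-coloring. The avocado edges can be singled out by local structural conditions --- in particular, the no-face-of-size-a-multiple-of-four condition should force their choice --- and then the $\B/\C$ coloring of the non-avocado cycles and paths in each component can be fixed, with the $2^{\kappa(\W)}$-fold freedom matching the count above. Verifying that the resulting word genuinely satisfies the Kreweras prefix inequalities, using the planar embedding of $\W$ and the structure of trips around each face, will be the most delicate step of the argument.
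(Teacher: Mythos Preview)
Your forward direction and the lower bound are fine; the component-swap argument for producing $2^{\kappa(\W)}$ distinct Kreweras words with the same web is correct and is in fact a cleaner way to see that half of the count than the paper spells out.

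The genuine gaps are in the upper bound and, above all, in existence.

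For the upper bound, you assert that a partial $\B/\C$ swap within a single component ``alters the pattern of arc crossings and hence the web itself.'' The first half is true, but the implication to the web is precisely what needs proof: two different bump diagrams can in principle yield the same uncolored planar graph after breaking apart crossings, and you have given no argument ruling this out. What you would actually need is that the avocado matching of $c_w$ is determined by $\W$ alone, and that the residual $2$-coloring of the non-avocado edges has only a global $\B\leftrightarrow\C$ ambiguity on each component. Neither of these is obvious, and the second is exactly where the ``no $4k$-gon'' hypothesis must enter; your sketch does not use it.

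For existence, you correctly flag this as the hard part but then offer only the hope that ``the avocado edges can be singled out by local structural conditions.'' You do not say what those conditions are, and in fact the paper does not proceed locally at all. It instead defines an explicit iterative path-building procedure (\cref{construction:coloring}) that mimics the arcs of a bump diagram inside $\W$, and proves by induction on the number of internal faces that this procedure succeeds on every Kreweras web (\cref{lem:coloring_success}). The induction step peels off an internal face with three consecutive sides on the outer boundary, whose existence requires a separate structural lemma (\cref{lem:special_vertices}) proved via a $4$-cycle count in planar cubic bipartite graphs (\cref{lem:4-cycles}); the ``no $4k$-gon'' hypothesis is used through \cref{lem:4k+2-path} to control how colors propagate when the face is reattached. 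None of this machinery is hinted at in your plan, and I do not see how to replace it with purely local reasoning. A bonus of the paper's route is that the $2^{\kappa(\W)}$ count falls out directly from the $2^{\kappa(\W)}$ initial color choices in the construction together with \cref{lem:success_means_word}, so existence and exact enumeration are obtained in one stroke rather than via separate lower and upper bounds.
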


Let us first note an enumerative consequence (which is stated as part of~\cref{thm:weighted_kreweras_web_count} in~\cref{sec:intro}):

\begin{cor} \label{cor:web_count}
We have
\[ \sum_{\W} 2^{\kappa(W)} = \frac{4^n}{(n+1)(2n+1)}\binom{3n}{n}, \]
where the sum is over all Kreweras webs $\W$ with $3n$ boundary vertices.
\end{cor}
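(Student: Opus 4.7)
The plan is to deduce the corollary directly from \cref{thm:kreweras_webs} by fiber-counting. The construction $w \mapsto \W_w$ of \cref{construction:breaking-apart} gives a map from Kreweras words of length $3n$ to $\mathfrak{sl}_3$-webs with $3n$ boundary vertices. By \cref{thm:kreweras_webs}, the image of this map is exactly the set of Kreweras webs with $3n$ boundary vertices, and the fiber over each such web $\W$ has cardinality $2^{\kappa(\W)}$.

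Therefore, partitioning the set of Kreweras words of length $3n$ according to their associated Kreweras web, I would write
\[
\sum_{\W} 2^{\kappa(\W)} = \sum_{\W} \#\{w\colon \W_w = \W\} = \#\{w\colon w \text{ is a Kreweras word of length } 3n\},
\]
where the sum ranges over Kreweras webs $\W$ with $3n$ boundary vertices. By Kreweras's product formula (recalled in \cref{sec:intro}), the right-hand side equals $K_n = \frac{4^n}{(n+1)(2n+1)}\binom{3n}{n}$, which completes the argument.

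In this sense the corollary is a purely formal consequence of \cref{thm:kreweras_webs} together with Kreweras's enumeration, and there is no genuine obstacle to overcome here; the real content lies in establishing \cref{thm:kreweras_webs} itself, in particular the determination of the fiber sizes as $2^{\kappa(\W)}$ (which is presumably proved in the paper by, e.g., analyzing how the $3$-edge-coloring $c_w$ can be recovered from $\W_w$ up to independent swaps of the $\B/\C$ coloring on each connected component).
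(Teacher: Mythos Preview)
Your proposal is correct and takes essentially the same approach as the paper: the paper's proof is a one-line statement that the corollary follows from combining \cref{thm:kreweras_webs} with Kreweras's product formula, which is exactly the fiber-counting argument you spelled out.
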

\begin{proof}
This follows from combining \cref{thm:kreweras_webs} with Kreweras's product formula enumerating Kreweras words~\cite{kreweras1965classe}.
\end{proof}

For more discussion of enumeration of webs (including an explanation of the rest of~\cref{thm:weighted_kreweras_web_count}), see \cref{subsec:maps}.

We prove \cref{thm:kreweras_webs} by demonstrating that we can appropriately edge-color any Kreweras web $\W$. This is achieved via the following construction:

\begin{construction}\label{construction:coloring}
Let $\W$ be Kreweras web with boundary vertices labeled counterclockwise $1$ to $3n$, and let $c_1,\dots,c_{\kappa(\W)}$ be a choice of color, either blue or crimson, for each connected component of $\W$. We create a proper $3$-edge-coloring of $\W$ (with colors avocado, blue, and crimson), and a system of $2n$ colored directed paths $1_L, 1_R, \dots, n_L, n_R$ in~$\W$, with the following properties:
\begin{itemize}
\item paths $i_L$ and $i_R$ begin at the same boundary vertex, and $i_L$ turns left when leaving the unique edge $e$ incident to this vertex, while $i_R$ turns right when leaving $e$;
\item the first and every other edge of a path is colored avocado, and all the other edges of the path have the same color (either blue or crimson) -- which we call the color of the path;
\item every avocado edge is traversed by precisely two paths, and every other edge is traversed by precisely one path;
\item any two paths share at most one (necessarily avocado) edge, and if they do, they are of different color;
\item if two paths $i_X$ and $j_Y$ with $i < j$ share an (avocado) edge $e$, then the path~$i_X$ turns to the left when visiting $e$, and continues the the right when leaving it.
\end{itemize}
The system of paths is created inductively. Once paths $1_L, 1_R,\dots, i{-}1_L, i{-}1_R$ are determined, the paths $i_L$ and $i_R$ start at the boundary vertex with smallest label incident to an uncolored edge. If $i_L$ is in a connected component of $\W$ different from the connected components containing $1_L,\dots,i{-}1_L$, the color of $i_L$ is $c_k$, where $k$ is the number of connected components containing $1_L,\dots,i_L$. The $3$-edge-coloring is then inherited from the colors of the paths.
\end{construction}
We say that \cref{construction:coloring} \dfn{succeeds} on $\W$ if the requested properties can be satisfied when creating the paths.

\begin{lemma} \label{lem:success_means_word}
\Cref{construction:coloring} succeeds on $\W$ if and only if $\W=\W_w$ for some Kreweras word $w$. And in this case, the $2^{\kappa(\W)}$ $3$-edge-colorings produced by applying \Cref{construction:coloring} to $\W$ with different choices of $c_1,\ldots,c_{\kappa(\W)}$ are exactly all the $c_w$ for such Kreweras words $w$.
\end{lemma}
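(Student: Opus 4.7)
The plan is to prove the two implications separately and then address the counting statement. Throughout, $3n$ is the number of boundary vertices of $\W$.

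For the implication that $\W = \W_w$ for some Kreweras word $w$ implies \cref{construction:coloring} succeeds, I would exhibit the paths $1_L, 1_R, \ldots, n_L, n_R$ directly from the arcs of the Kreweras bump diagram $\D_w$. Each $\A$-position in $w$ is the opener of exactly one blue arc and one crimson arc in $\D_w$; after applying \cref{construction:breaking-apart}, these become two paths in $\W_w$ which share the initial avocado edge and then diverge at the first black internal vertex, one turning left and one turning right. Letting $a_1 < \cdots < a_n$ denote the $\A$-positions of $w$, I would verify that the pair of paths starting at $a_i$ qualifies as $i_L$ and $i_R$ in the construction; in particular, since every $\B$- or $\C$-position $j < a_i$ is the closer of an arc whose opener is some $a_m$ with $m < i$, the edge at $j$ has been colored before step $i$, so $a_i$ is correctly the smallest-label boundary vertex with an uncolored incident edge at step $i$. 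The choices $c_1,\ldots,c_{\kappa(\W)}$ are then determined by the colors of the paths first entering each component.

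For the converse, I would begin with a successful run of \cref{construction:coloring} on $\W$ with choices $(c_1,\dots,c_{\kappa(\W)})$, and define $w$ by setting $w_i$ to $\A$, $\B$, or $\C$ according as the unique edge incident to boundary vertex $i$ is avocado, blue, or crimson. To recover $\W_w = \W$, I would invert \cref{construction:breaking-apart} by contracting each avocado edge (traversed by exactly one blue path and one crimson path) to a single crossing point of arcs of opposite colors; this produces a Kreweras bump diagram whose arcs match those defined by the paths, so $\W_w$ agrees with $\W$ and $c_w$ coincides with the output coloring. To show $w$ is a genuine Kreweras word, I would use that the blue paths give a collection of arcs $(i,j)$ on the positions $\{k \colon w_k \neq \C\}$ with $w_i = \A$ and $w_j = \B$, and similarly for crimson; the turn conventions enforced by the construction ensure these arcs form pairwise noncrossing matchings within each color class, which translates into the ballot condition on prefixes.

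For the count, distinct choices of $(c_1,\dots,c_{\kappa(\W)})$ yield distinct edge-colorings, since they differ on at least one non-avocado edge inside the corresponding component, and therefore distinct Kreweras words via \cref{prop:kword_web_faces}. Conversely, every Kreweras word with $\W_w = \W$ has its coloring $c_w$ arise from the construction for a unique choice, determined by the color of the first path to enter each component in the construction's processing order.

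The main obstacle I anticipate is verifying, in the converse direction, that the arcs produced from the paths actually form noncrossing matchings (equivalently, that $w$ satisfies the ballot condition). I expect this to reduce to a careful tracking of which edges are colored at each step, combined with the ordering property for paths sharing an avocado edge: for $i < j$, path $i_X$ turns left and $j_Y$ turns right at the shared edge, ensuring that $i_X$'s endpoints bracket $j_Y$'s endpoints correctly and so preventing same-color crossings.
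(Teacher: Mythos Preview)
Your approach is essentially the same as the paper's: in one direction the arcs of $\D_w$ furnish the system of paths; in the other, the same-color paths form two noncrossing matchings with a common set of openers, hence a Kreweras bump diagram. The paper's proof is just a two-sentence sketch of exactly this.

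One small correction to your anticipated obstacle: the noncrossing property within each color class does \emph{not} come from the ordering/turning rule for paths sharing an avocado edge (that rule concerns paths of \emph{different} colors). It follows more directly from the bullet ``any two paths sharing an avocado edge are of different color'': since the $3$-edge-coloring is proper, any two paths through a common internal vertex must share its avocado edge, so two same-color paths are internally vertex-disjoint in $\W$; contracting the avocado edges therefore yields same-color arcs that never intersect, hence are noncrossing. The turning rule is instead what guarantees that, at each contracted avocado edge, the resulting crossing of a blue and a crimson arc has the correct orientation so that the reconstructed diagram is literally $\D_w$ and hence $\W_w=\W$.
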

\begin{proof}
Let $w$ be a Kreweras word. \Cref{prop:kword_web_faces} says that $\W_w$ must be a Kreweras web. Moreover, it is easy to see that the arcs of $\D_w$ determine a system of colored paths in $\W_w$ satisfying the properties required in \Cref{construction:coloring}.

Conversely, let $\W$ be a Kreweras web on which \cref{construction:coloring} succeeds. Then, by the properties of the construction, the paths of the same color form two noncrossing perfect matchings, with the same set of openers. Thus, they yield a Kreweras bump diagram of a Kreweras word.
\end{proof}

\begin{cor} \label{cor:coloring_rotation}
The set of Kreweras webs $\W$ on which \Cref{construction:coloring} succeeds is closed under $\rot$ and $\flip$.
\end{cor}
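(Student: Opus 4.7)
The plan is to reduce the statement directly to results already proved in the paper, using \cref{lem:success_means_word} as the bridge between the ``coloring-theoretic'' characterization (success of \cref{construction:coloring}) and the ``word-theoretic'' characterization (being of the form $\W_w$ for a Kreweras word $w$).

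First I would invoke \cref{lem:success_means_word} to reformulate the hypothesis: a Kreweras web $\W$ lies in the set of interest if and only if $\W = \W_w$ for some Kreweras word $w$ of length $3n$. Let $\W$ be such a web, with $\W = \W_w$. Then by \cref{thm:web}, we have
\[\rot(\W) = \rot(\W_w) = \W_{\pro(w)} \qquad \text{and} \qquad \flip(\W) = \flip(\W_w) = \W_{\evac(w)}.\]
Since promotion and evacuation are operators on the set of Kreweras words (of the same length), both $\pro(w)$ and $\evac(w)$ are themselves Kreweras words. Hence $\rot(\W)$ and $\flip(\W)$ are both of the form $\W_{w'}$ for some Kreweras word $w'$, and applying \cref{lem:success_means_word} once more we conclude that \cref{construction:coloring} succeeds on each of them.

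There is no real obstacle here; the corollary is essentially a dictionary translation of \cref{thm:web} via \cref{lem:success_means_word}. The only point worth flagging is that one should use \emph{both} directions of \cref{lem:success_means_word}: the forward direction to extract a word $w$ from the assumption on $\W$, and the backward direction to upgrade the equalities $\rot(\W_w) = \W_{\pro(w)}$ and $\flip(\W_w) = \W_{\evac(w)}$ into success of \cref{construction:coloring} on $\rot(\W)$ and $\flip(\W)$. No new combinatorial analysis of the paths in \cref{construction:coloring} is required.
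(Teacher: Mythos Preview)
Your argument is correct and is exactly the intended proof: the paper states this as an immediate corollary of \cref{lem:success_means_word} (together with the already-proved \cref{thm:web}), and your write-up simply makes that implicit reasoning explicit. There is no circularity, since \cref{thm:web} was established before this subsection and does not rely on \cref{cor:coloring_rotation} or \cref{lem:coloring_success}.
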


\begin{lemma}\label{lem:4k+2-path}
Suppose that the boundary vertices $1$ and $2$ are in the same connected component of a Kreweras web $\W$, and suppose that the shortest path from $1$ to $2$ (i.e., the one that turns right at every vertex) consists of $4k+2$ edges, for $k\geq 1$. Then, if it succeeds, the coloring produced by \cref{construction:coloring} colors the edges incident to $1$ and $2$ avocado. Moreover, the path $1_R$ and the path $2_L$ have the same color.
\end{lemma}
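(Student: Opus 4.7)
The plan is to analyze the right-turning walk $\gamma$ from vertex $1$ to vertex $2$ in the planar graph $\widehat{\W}$ obtained from $\W$ by adding the disk-boundary arcs between consecutive boundary vertices, and to trace the paths $1_L, 1_R$ and $2_L, 2_R$ of \cref{construction:coloring} in relation to $\gamma$.

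First, I would observe that $\gamma$, together with the short disk-boundary arc from $2$ back to $1$, bounds a single face $F$ of $\widehat{\W}$; this face has $4k+3$ sides, and no vertices of $\W$ lie in its interior. In particular, the $4k+1$ internal vertices traversed by $\gamma$ are all trivalent crossing vertices of $\W$.

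For part~(a), the pendant incident to vertex~$1$ is avocado because both $1_L$ and $1_R$ traverse it. To show the pendant at vertex~$2$ is also avocado, I would argue that vertex~$2$ must be an opener of $w$ via a color-sequence analysis along $\gamma$. The key structural input is that the cyclic counterclockwise edge-color order at each crossing vertex of $\W$ has the form $(\mathrm{avocado}, Z, -Z)$, where $Z \in \{\mathrm{blue}, \mathrm{crimson}\}$ is determined by the local geometry of the crossing in $\D_w$ (specifically, $Z$ is the color of the arc with smaller opener at an internal crossing, or with smaller closer at a boundary crossing). The right-turn rule therefore cycles $\mathrm{avocado} \to Z \to -Z \to \mathrm{avocado}$ at each such vertex. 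Combining the walk length $4k+2$ with the Kreweras-web face condition (all internal faces have $\equiv 2 \pmod 4$ sides), a parity argument shows the last edge of $\gamma$ must be avocado.

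For part~(b), I would claim that paths $1_L$ and $2_L$ must share an avocado edge on or near the boundary of $F$. Both $1_L$ and $2_L$ begin by turning away from $\gamma$ at their respective boundary crossings, but because of the alternating-avocado rule and the parity $4k+2 \equiv 2 \pmod 4$ (as opposed to $\equiv 0 \pmod 4$), the two paths are forced to eventually traverse a common avocado edge. Once this is established, \cref{construction:coloring}'s constraint that two paths sharing an avocado edge have different colors gives $1_L \ne 2_L$ as colors; combined with $1_L \ne 1_R$ (they share the pendant at vertex~$1$), this yields that $1_R$ and $2_L$ have the same color.

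The main obstacle is carrying out the detailed combinatorial bookkeeping needed to establish both the ``final edge of $\gamma$ is avocado'' claim in~(a) and the ``$1_L$ and $2_L$ share an avocado edge'' claim in~(b). Both rely on the interplay between the walk length $4k+2$ (with $k\geq 1$) and the Kreweras-web face condition, which provides the parity constraints needed; this is also where the hypothesis $k \geq 1$ is essential, since the short walks of length $2$ between consecutive boundary vertices (the $k=0$ case) behave differently.
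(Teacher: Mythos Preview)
Your plan has two substantive gaps. First, the cyclic-order claim ``the counterclockwise edge-color order at each crossing vertex has the form $(\textrm{avocado}, Z, -Z)$'' does not give you a usable parity: because $Z$ is allowed to depend on the vertex, right-turning does not induce a single global $3$-cycle on colors, so you cannot conclude the color of the last edge of $\gamma$ from the length $4k+2$ alone. Second, and more seriously, your orientation for $2_L$ is reversed. The right-turning path $\gamma$ ends at $2$; reversing it, one enters the pendant at $2$ and then turns \emph{left} at the adjacent black vertex to continue back towards~$1$. Hence $2_L$ heads \emph{along} $\gamma$ (towards $x$), not away from it, while $1_L$ heads away from $\gamma$. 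There is no mechanism forcing $1_L$ and $2_L$ to share an avocado edge, so your deduction $\mathrm{color}(2_L)\neq\mathrm{color}(1_L)$ is unfounded.

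The paper's argument is both simpler and structurally different. For the avocado claim at vertex~$2$, it observes directly that if the distance from~$1$ to~$2$ exceeds~$2$, then $1_R$ turns left at the first white internal vertex $x$ (two edges in) and leaves $\gamma$; thus neither $1_L$ nor $1_R$ colors the pendant at~$2$, which is therefore the initial avocado edge for $2_L,2_R$. For the color claim, the paper does not compare $1_L$ with $2_L$ at all. Instead it analyzes the $4k$-edge segment of $\gamma$ from $2$ back to $x$ and shows, using the ordering property from the fifth bullet of \cref{construction:coloring} (paths $i_X, j_Y$ with $i<j$ at a shared avocado edge have the smaller-index path turning left in, right out), that every other edge is avocado and the $2k$ non-avocado colors strictly alternate. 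Since $2k$ is even, the first non-avocado color (which is $\mathrm{color}(2_L)$) equals the color opposite the last one; the last one is forced at $x$ to be $-\mathrm{color}(1_R)$, giving $\mathrm{color}(2_L)=\mathrm{color}(1_R)$. The crucial ingredient you are missing is this use of the $i<j$ turning rule to propagate color information along the boundary of $F$.
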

\begin{proof}
The edge incident to $2$ will be colored a non-avocado color if and only if the distance between vertices $1$ and $2$ is two: if not, the path $1_R$ turns left after the second edge and therefore does not visit vertex $2$.  Thus, after the paths $1_L$ and $1_R$ are created, the edge incident to vertex $2$ is uncolored, and is therefore chosen as the initial edge of paths~$2_L$ and $2_R$.

So now let us focus on the claim about the colors of $1_R$ and $2_L$. Let $x$ be the first white non-boundary vertex on the colored path $1_R$.  It suffices to show that every other edge of the shortest path from $2$ to $x$ is colored avocado, and the colors of the remaining edges alternate.

\begin{figure}
\begin{center}
\begin{tikzpicture}[scale=1.25]
\tikzset{snake it/.style={decorate, decoration=snake}};
\tikzset{->-/.style={decoration={ markings, mark=at position 0.5 with {\arrow{>}}},postaction={decorate}}};
\draw [ultra thick,green,snake it] (0,0) to (0,0.75);
\draw [ultra thick,red,dashed] (0,0.75) to (0.5,1.5);
\draw [ultra thick,blue] (0,0.75) to (-0.5,1.5);
\draw [ultra thick,black,dotted]  (0.5,1.5) to (1.5,1.5);
\draw [ultra thick,blue] (1.5,1.5) to (2.25,1.5);
\draw [ultra thick,red,dashed] (1.75,2.0) to (2.25,1.5);
\draw [ultra thick,green,snake it] (2.25,1.5) to (3.0,1.5);
\draw [ultra thick,black,dotted,->-] (3.0,2.5) to (3.0,1.5);
\draw [ultra thick,red,dashed] (3.0,1.5) to (3.75,1.5);
\draw [ultra thick,green,snake it] (3.75,1.5) to (4.5,1.5);
\draw [ultra thick,black,dotted,->-] (4.5,2.5) to (4.5,1.5);
\draw [ultra thick,black,dotted]  (4.5,1.5) to (5.5,1.5);
\draw [ultra thick,red,dashed] (5.5,1.5) to (6.0,0.75);
\draw [ultra thick,blue] (6.5,1.5) to (6.0,0.75);
\draw [ultra thick,green,snake it] (6.0,0.75) to (6.0,0);
\draw[thick,draw=black,fill=white]  (0,0) circle (3pt);
\draw[thick,draw=black,fill=black]  (0,0.75) circle (3pt);
\draw[thick,draw=black,fill=white]  (0.5,1.5) circle (3pt);
\draw[thick,draw=black,fill=white]  (1.5,1.5) circle (3pt);
\draw[thick,draw=black,fill=black]  (2.25,1.5) circle (3pt);
\draw[thick,draw=black,fill=white]  (3.0,1.5) circle (3pt);
\draw[thick,draw=black,fill=black]  (3.75,1.5) circle (3pt);
\draw[thick,draw=black,fill=white]  (4.5,1.5) circle (3pt);
\draw[thick,draw=black,fill=white]  (5.5,1.5) circle (3pt);
\draw[thick,draw=black,fill=black]  (6.0,0.75) circle (3pt);
\draw[thick,draw=black,fill=white]  (6.0,0) circle (3pt);
\node at (0,-0.3) {$1$};
\node at (6.0,-0.3) {$2$};
\node at (4.5,2.8) {$i$};
\node at (3.0,2.8) {$j$};
\node at (0.5,1.8) {$x$};
\node at (0.5,1.0) {$1_R$};
\node at (-0.5,1.0) {$1_L$};
\node at (5.5,1.0) {$2_L$};
\node at (6.5,1.0) {$2_R$};
\node at (4.8,2.1) {$i_X$};
\node at (3.3,2.1) {$j_Y$};
\node at (4.25,1.2) {$i_X$};
\node at (3.45,1.2) {$i_X$};
\node at (2.65,1.2) {$i_X, j_Y$};
\node at (2.15,1.9) {$i_X$};
\node at (2.15,1.9) {$i_X$};
\node at (1.85,1.2) {$j_Y$};
\end{tikzpicture}
\end{center}
\caption{The situation in \cref{lem:4k+2-path}.} \label{fig:4k+2-path}
\end{figure}

Suppose that a non-avocado edge on this path belongs to the colored path $i_X$ and the two following edges, $e_\ell$ to the left and $e_r$ to the right, belong to the colored path~$j_Y$.  Then, since colored paths share at most one (avocado) edge, we have $2 < i < j$. This situation is depicted in \cref{fig:4k+2-path}.

It follows that the colored path $i_X$ continues on edge $e_\ell$,  which is therefore colored avocado.  Furthermore, the left edge after $e_\ell$ belongs to path $j_Y$, whose color is therefore different from the color of $i_X$.
\end{proof}

\begin{lemma}\label{lem:4-cycles}
  A planar cubic bipartite simple graph has at least six $4$-cycles.
\end{lemma}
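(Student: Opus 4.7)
The plan is to apply Euler's formula plus a face-length count to show the graph has at least six $4$-faces, and then check each one corresponds to a distinct $4$-cycle. The one non-routine ingredient is a preliminary structural fact: any cubic bipartite simple graph is bridgeless. Indeed, if $e = uv$ were a bridge, one side $G_1$ of $G - e$ would be a bipartite subgraph with $u$ as its unique degree-$2$ vertex and all other vertices of degree~$3$; writing $A_1, B_1$ for $G_1$'s inherited bipartition, the vertex $u$ lies in the part opposite to $v$, say $B_1$, and summing degrees in each part yields $3|A_1| = 3|B_1| - 1$, which has no integer solution. In particular each edge of $G$ borders two distinct faces, so $\sum_f \ell(f) = 2E$.

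Now let $G$ have $V$ vertices, $E$ edges, $F$ faces, and $c$ connected components. Cubicity gives $V = 2E/3$, and Euler's formula yields $F = 1 + c + E/3$. Each face boundary walk has even length (by bipartiteness) and cannot equal $2$ (by bridgelessness plus simplicity), so $\ell(f) \geq 4$. Writing $f_4$ for the number of $4$-faces and bounding the remaining $F - f_4$ faces by $\ell(f) \geq 6$,
\[
2E \;=\; \sum_f \ell(f) \;\geq\; 4 f_4 + 6(F - f_4) \;=\; 6F - 2 f_4,
\]
so $f_4 \geq 3F - E = 3 + 3c \geq 6$.

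Finally, in a bridgeless simple graph a boundary walk of length $4$ uses four distinct edges, and the closed walk $v_1 v_2 v_3 v_4 v_1$ cannot repeat a vertex without forcing two of those edges to coincide, so every $4$-face is a genuine $4$-cycle. Distinct $4$-faces yield distinct $4$-cycles: if two shared the same boundary cycle $C$, then $C$ would decompose the plane into exactly its two complementary regions, forcing $G = C$, but $C$ is $2$-regular and hence not cubic. The main obstacle is the preliminary bridgelessness check; once that is in hand, the Euler computation and the identification of $4$-faces with $4$-cycles are routine.
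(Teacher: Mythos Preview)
Your proof is correct and uses essentially the same Euler-formula face-counting argument as the paper. The paper's version simply reduces to the connected case and omits both the bridgelessness check and the verification that distinct $4$-faces yield distinct $4$-cycles, so your treatment is in fact more careful on these points.
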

\begin{proof}
Let $G$ be a planar cubic bipartite simple graph.  Without loss of generality, we can assume that $G$ is connected.

By the handshaking lemma, a cubic graph has an even number of vertices, say~$2n$, and $3n$ edges.  For $k\geq 2$, let $f_{2k}$ be the number of faces bounded by $2k$ edges of $G$. By Euler's formula, the total number of faces of~$G$ equals $3n-2n+2 = n+2$.

Since $G$ is cubic, every vertex is contained in three faces.  Thus
\[6n  = \sum_{k\geq 2} 2k f_{2k} \geq 4 f_4 + 6(n+2 -f_4)\]
which implies that $f_4 \geq 6$.
\end{proof}

\begin{lemma}\label{lem:special_vertices}
Let $\W$ be a Kreweras web with at least one internal face.  Then there is an internal face of $\W$ which has at least three consecutive sides on its boundary with the outer face.
\end{lemma}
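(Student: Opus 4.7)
The plan is to apply \cref{lem:4-cycles} to a cubic bipartite simple planar graph $G$ constructed from $\W$, then interpret the $4$-faces of $G$ as structures witnessing three consecutive sides of some internal face of $\W$ shared with the outer face.

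A useful structural preliminary I would first establish: since every boundary vertex of $\W$ is white and $\W$ is bipartite, every boundary-adjacent internal vertex is black, and no white internal vertex has a boundary neighbor. Consequently, in the subgraph $\W^\circ$ induced on internal vertices, every white internal vertex has degree exactly $3$, while only black vertices can have degree less than $3$; moreover, two ``deficient'' black vertices of $\W^\circ$ cannot be adjacent to one another, since they are all of a single color.

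Next, I would construct $G$ by deleting all boundary vertices of $\W$ together with their pendant edges and then inserting, along the disk rim between each pair of consecutive former-boundary positions, a small bipartite ``capping'' gadget (an alternating chain of new black and new white vertices) that restores trivalency at each deficient black vertex while keeping the graph simple and planar. Since all deficient stubs have the same color, such a bipartite capping can be arranged consistently. Applying \cref{lem:4-cycles} then produces at least six $4$-faces in $G$. Because every internal face of $\W$ has size $\geq 6$ with size $\equiv 2 \pmod{4}$, no $4$-face of $G$ can consist entirely of edges inherited from $\W$, so every $4$-face uses at least one capping edge. I plan to verify that a $4$-face crossing into $\W$ corresponds precisely to three consecutive sides, on the boundary of some internal face $F$ of $\W$, that are shared with the outer face---with the capping edges supplying the ``outer'' arc that completes the $4$-cycle. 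Since at least six $4$-faces exist and only a controlled, bounded number of them can live entirely inside the capping, a nontrivial one must exist, giving the desired $F$.

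The hard part will be designing the capping gadget so that $G$ satisfies all four properties (cubic, bipartite, simple, planar) simultaneously, and so that the promised correspondence between crossing $4$-faces of $G$ and three-consecutive-shared-sides configurations in $\W$ is faithful. I expect this to require casework on how many boundary neighbors ($1$, $2$, or $3$) each deficient black vertex of $\W$ has, together with a local argument controlling how many $4$-faces of $G$ live entirely inside a single cap; both pieces should be manageable but tedious.
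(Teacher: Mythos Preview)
Your overall strategy---cap the web to a cubic bipartite simple planar graph and invoke \cref{lem:4-cycles}---matches the paper's. But the key construction is missing, and the paper's solution is quite different from the ad hoc gadgets you sketch.

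The paper first removes from $\W$ \emph{all vertices not lying on any internal face} (not just the boundary vertices), obtaining a graph $G$; the lemma is then recast as the assertion that $G$ has two \emph{adjacent} degree-$2$ vertices. For the cap, the paper takes a color-reversed copy $G'$ of $G$ and adds one edge $\{v_i,v'_i\}$ between each degree-$2$ vertex of $G$ and its twin in $G'$. The resulting $H$ is instantly cubic, bipartite, simple, and planar. If no two degree-$2$ vertices of $G$ are adjacent, a short check shows $H$ has no $4$-cycle at all, contradicting \cref{lem:4-cycles}. There is no gadget design, no casework on the number of boundary neighbors, and no need to bound the number of $4$-faces living inside a cap.

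Your proposal has two concrete gaps relative to this. First, by removing only boundary vertices, your $\W^\circ$ still contains internal vertices that sit on no internal face (tree-like portions of the web). These are not deficient, so your caps do not touch them, yet after capping they can participate in $4$-faces that have nothing to do with three consecutive shared sides of an internal face; your intended correspondence between $4$-faces and the desired configuration is therefore not faithful. Second, your capping gadget is unspecified, and the claim that only a ``controlled, bounded number'' of $4$-faces live entirely inside the caps is unjustified: a cap restoring trivalency at $\Theta(n)$ deficient vertices could easily carry more than six $4$-faces of its own, absorbing the entire output of \cref{lem:4-cycles}. The doubling trick handles both issues simultaneously: the mirror copy \emph{is} the cap, and by construction it introduces a $4$-cycle precisely when two degree-$2$ vertices of $G$ are adjacent.
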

\begin{proof}
Let $G$ be the graph obtained from $\W$ by removing all vertices not contained in any internal face.  Let $v_1,\ldots,v_k$ be the list of vertices of degree $2$ in $G$.  The lemma's claim is equivalent to the assertion that there are two vertices among $v_1,\ldots,v_k$ which are adjacent, which we now show.

Let $G'$ be a copy of $G$, and let $v'_1, \ldots, v'_k$ be the vertices of $G'$ corresponding to $v_1,\ldots,v_k$.  We construct a planar bipartite cubic graph $H$ by adding edges $\{v_i, v'_i\}$ to $G\cup G'$ for $i\in\{1,\dots,k\}$, as depicted in \cref{fig:special_vertices}.

\begin{figure}
\begin{center}
\begin{tikzpicture}[scale=0.8]
  \node at (-1,1) {$G$};
  \node at (-1,-1) {$G'$};
  \node at (1,1.3) {$v_1$};
  \node at (3,1.3) {$v_2$};
  \node at (7,1.3) {$v_3$};
  \node at (8,1.3) {$v_4$};
  \draw[thick] (0,1)--(9,1);
  \draw[thick] (0,1) to[out=90,in=90] (5,1);
  \draw[thick] (6,1) to[out=90,in=90] (9,1);
  \draw[thick] (0,1)--(0.5,1.5);
  \draw[thick] (2,1)--(2,1.5);
  \draw[thick] (4,1)--(4,1.5);
  \draw[thick] (9,1)--(8.5,1.5);
  \draw[thick] (1,0.75)--(1,-0.75);
  \draw[thick] (3,0.75)--(3,-0.75);
  \draw[thick] (7,0.75)--(7,-0.75);
  \draw[thick] (8,0.75)--(8,-0.75);
  \draw[thick,draw=black,fill=white]  (0,1) circle (3pt);
  \draw[thick,draw=black,fill=black]  (1,1) circle (3pt);
  \draw[thick,draw=black,fill=white]  (2,1) circle (3pt);
  \draw[thick,draw=black,fill=black]  (3,1) circle (3pt);
  \draw[thick,draw=black,fill=white]  (4,1) circle (3pt);
  \draw[thick,draw=black,fill=black]  (5,1) circle (3pt);
  \draw[thick,draw=black,fill=white]  (6,1) circle (3pt);
  \draw[thick,draw=black,fill=black]  (7,1) circle (3pt);
  \draw[thick,draw=black,fill=white]  (8,1) circle (3pt);
  \draw[thick,draw=black,fill=black]  (9,1) circle (3pt);
  \node at (1,-1.5) {$v'_1$};
  \node at (3,-1.5) {$v'_2$};
  \node at (7,-1.5) {$v'_3$};
  \node at (8,-1.5) {$v'_4$};
  \draw[thick] (0,-1)--(9,-1);
  \draw[thick] (0,-1) to[out=270,in=270] (5,-1);
  \draw[thick] (6,-1) to[out=270,in=270] (9,-1);
  \draw[thick] (0,-1)--(0.5,-1.5);
  \draw[thick] (2,-1)--(2,-1.5);
  \draw[thick] (4,-1)--(4,-1.5);
  \draw[thick] (9,-1)--(8.5,-1.5);
  \draw[thick,draw=black,fill=black]  (0,-1) circle (3pt);
  \draw[thick,draw=black,fill=white]  (1,-1) circle (3pt);
  \draw[thick,draw=black,fill=black]  (2,-1) circle (3pt);
  \draw[thick,draw=black,fill=white]  (3,-1) circle (3pt);
  \draw[thick,draw=black,fill=black]  (4,-1) circle (3pt);
  \draw[thick,draw=black,fill=white]  (5,-1) circle (3pt);
  \draw[thick,draw=black,fill=black]  (6,-1) circle (3pt);
  \draw[thick,draw=black,fill=white]  (7,-1) circle (3pt);
  \draw[thick,draw=black,fill=black]  (8,-1) circle (3pt);
  \draw[thick,draw=black,fill=white]  (9,-1) circle (3pt);
\end{tikzpicture}
\end{center}
\caption{For the proof of \cref{lem:special_vertices}: how to obtain a bipartite cubic graph from a web.} \label{fig:special_vertices}
\end{figure}
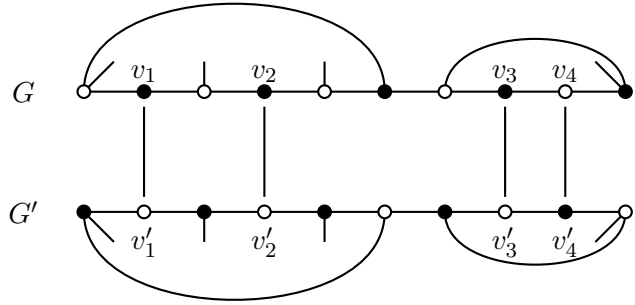

Suppose that $G$ has no pair of adjacent vertices of degree $2$. Then $H$ contains no $4$-cycles, which is impossible by \cref{lem:4-cycles}.
\end{proof}

\begin{lemma} \label{lem:coloring_success}
  \cref{construction:coloring} succeeds on any Kreweras web $\W$.
\end{lemma}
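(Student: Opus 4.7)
My plan is to prove \cref{lem:coloring_success} by strong induction on the number of internal vertices of $\W$.

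For the base case, when $\W$ has no internal faces, every connected component of $\W$ is a trivalent bipartite tree with all boundary vertices white. A standard handshake count combined with the tree identity $|V|-|E|=1$ shows that a component with $m$ internal vertices has exactly $m+2$ boundary vertices, and an easy sub-induction (peeling off a ``leaf claw'' at a time) shows that \cref{construction:coloring} succeeds on each such tree, with precisely $2$ choices of $c_k$ per component. This matches the claimed count $2^{\kappa(\W)}$. The path constraints are automatic because the tree structure prevents any two paths from sharing more than one edge.

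For the inductive step, assume $\W$ has at least one internal face. Apply \cref{lem:special_vertices} to produce an internal face $F$ with at least $3$ consecutive edges $e_1,e_2,e_3$ shared with the outer face. Using \cref{cor:coloring_rotation}---which follows from \cref{lem:success_means_word} combined with \cref{thm:web}, and so is independent of the present lemma---I may rotate $\W$ so that boundary vertex $1$ lies suitably close to $F$. The two middle vertices $u_2,u_3$ of the $3$-edge path $e_1,e_2,e_3$ are trivalent internal vertices, each with one remaining edge. A case analysis on where those edges lead (to a boundary vertex, to another vertex of $F$, or to a neighboring face) produces a local reduction that yields a smaller web $\W'$.

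The key step is to verify that $\W'$ remains a Kreweras web: I must check that $\W'$ is still irreducible and has no face with a multiple of $4$ sides, since the reduction may merge $F$ with neighboring faces. Once $\W'$ is confirmed a Kreweras web, the induction hypothesis supplies a successful coloring on $\W'$. I then extend this coloring to $\W$ by recoloring the removed edges and reconstructing the paths through the affected region. The main tool here is \cref{lem:4k+2-path}: it pins down the relative colors of the paths leaving low-indexed boundary vertices in the affected region, which guarantees that the extension is consistent with the required properties of \cref{construction:coloring}.

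The main obstacle will be in defining the local reduction precisely enough to preserve the Kreweras web property, and in showing that the extension maintains all five constraints simultaneously---particularly that every avocado edge is traversed by exactly two paths of different colors with the prescribed turning pattern. A careful case analysis on the local structure around $F$ is likely unavoidable, but the structural guarantees of \cref{lem:special_vertices,lem:4k+2-path} should reduce each case to a direct verification.
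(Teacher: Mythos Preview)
Your outline is close in spirit to the paper's proof and invokes the same auxiliary lemmas (\cref{lem:special_vertices}, \cref{lem:4k+2-path}, \cref{cor:coloring_rotation}), but it is missing one structural ingredient that the paper's argument relies on. The paper does not pass directly from ``$\W$ has an internal face'' to the application of \cref{lem:special_vertices}. Instead it first treats a separate case: if $\W$ has any white \emph{internal} vertex $v$ not contained in an internal face, one splits $\W$ at $v$ into three smaller Kreweras webs $\W_1,\W_2,\W_3$, colors each by induction, and glues the colorings back together. Only once every white internal vertex lies on some internal face does the paper invoke \cref{lem:special_vertices}, and this hypothesis is exactly what forces the two ``middle'' vertices of the special three-edge boundary segment to sit in the concrete configuration pictured in the paper (one black vertex adjacent to a boundary vertex, one white vertex adjacent to a black vertex that is in turn adjacent to two boundary vertices), so that a single explicit reduction suffices.

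Your proposal collapses this preliminary case into the base case ``no internal faces at all,'' which is strictly weaker: a web can simultaneously have internal faces and have white internal vertices that belong to no internal face (tree-like appendages hanging off the face structure). In that situation the third edges from your $u_2,u_3$ may lead into such a tree portion, which is not covered by your trichotomy ``boundary vertex / another vertex of $F$ / neighboring face.'' Without the splitting step, the promised case analysis becomes open-ended and the verification that the resulting $\W'$ is again a Kreweras web (and that \cref{lem:4k+2-path} applies) is no longer routine. Adding the split-at-a-non-face-white-vertex case restores the argument and makes the inductive reduction as short as in the paper.
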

\begin{proof}

We use induction on the number of internal faces and the number of vertices of $\W$. We also freely relabel the boundary vertices via \cref{cor:coloring_rotation}.

If $\W$ has a white vertex which is not contained in an internal face, we replace this vertex by three independent boundary vertices and obtain three graphs $\W_1$, $\W_2$, and $\W_3$, ordered counterclockwise.  We label the boundary vertices of $\W_1$ and $\W_2$ such that the split vertex is the last boundary vertex and those of $\W_3$ such that the split vertex has label $1$.

By induction, \cref{construction:coloring} succeeds on all three graphs.  Moreover, we can color $\W_1$ and $\W_2$ such that the colors of the edges incident to their last boundary vertices are distinct.  Finally, we choose the coloring of $\W_3$ so that the edge to the left of the first boundary edge has the same color as the edge incident to the last boundary vertex of $\W_2$, and, accordingly, the edge to the right of the first boundary edge has the same color as the edge incident to the last boundary vertex of $\W_1$.  It is now clear that we obtain a coloring which coincides with the coloring produced by \cref{construction:coloring}.

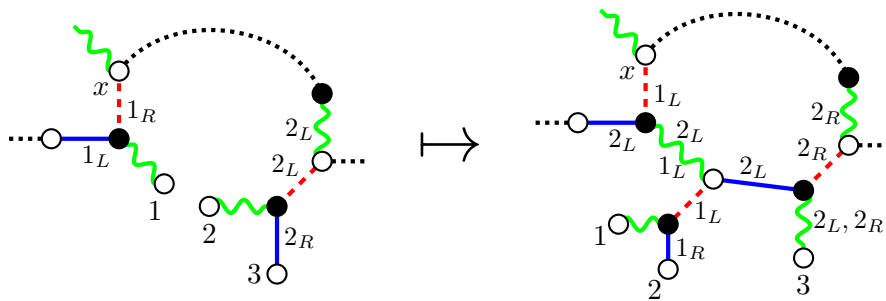
\begin{figure}
\begin{center}
  \begin{tikzpicture}
  \node at (0,0) {\begin{tikzpicture}[scale=1.2]
  \tikzset{snake it/.style={decorate, decoration=snake}};
  \node at (-0.1,-0.3) {$1$};
  \node at (0.5,-0.55) {$2$};
  \node at (1.0,-1.0) {$3$};
  \node at (-0.7,1.05) {$x$};
  \node at (-0.75,0.3) {\small $1_L$};
  \node at (-0.25,0.8) {\small $1_R$};
  \node at (1.5,-0.6) {\small $2_R$};
  \node at (1.35,0.2) {\small $2_L$};
  \node at (1.5,0.6) {\small $2_L$};
  \draw [ultra thick,green,snake it] (0,0) to (-0.5,0.5);
  \draw [ultra thick,red,dashed] (-0.5,0.5) to (-0.5,1.25);
  \draw [ultra thick,blue] (-0.5,0.5) to (-1.25,0.5);
  \draw [ultra thick,green,snake it] (0.5,-0.25) to (1.25,-0.25);
  \draw [ultra thick,blue] (1.25,-0.25) to (1.25,-1.0);
  \draw [ultra thick,red,dashed] (1.25,-0.25) to (1.75,0.25);
  \draw [ultra thick,green,snake it] (1.75,0.25) to (1.75,1.0);
  \draw [ultra thick,black,dotted] (-0.5,1.25) to [bend left =60] (1.75,1.0);
  \draw [ultra thick,black,dotted] (-1.25,0.5) to (-1.75,0.5);
  \draw [ultra thick,black,dotted] (1.75,0.25) to (2.25,0.25);
  \draw [ultra thick,green,snake it] (-0.5,1.25) to (-1.0,1.75);
  \draw[thick,draw=black,fill=white]  (0,0) circle (3pt);
  \draw[thick,draw=black,fill=black]  (-0.5,0.5) circle (3pt);
  \draw[thick,draw=black,fill=white]  (-1.25,0.5) circle (3pt);
  \draw[thick,draw=black,fill=white]  (-0.5,1.25) circle (3pt);
  \draw[thick,draw=black,fill=white]  (0.5,-0.25) circle (3pt);
  \draw[thick,draw=black,fill=black]  (1.25,-0.25) circle (3pt);
  \draw[thick,draw=black,fill=white]  (1.25,-1.0) circle (3pt);
  \draw[thick,draw=black,fill=white]  (1.75,0.25) circle (3pt);
  \draw[thick,draw=black,fill=black]  (1.75,1.0) circle (3pt);
  \end{tikzpicture}
  };
  \node at (3.5,0) {\Huge $\mapsto$};
  \node at (7,0) {\begin{tikzpicture}[scale=1.2]
  \tikzset{snake it/.style={decorate, decoration=snake}};
  \node at (-1.0,-0.75) {$1$};
  \node at (-0.4,-1.35) {$2$};
  \node at (1.25,-1.3) {$3$};
  \node at (-0.7,1.05) {$x$};
  \node at (-0.75,0.3) {\small $2_L$};
  \node at (-0.25,0.8) {\small $1_L$};
  \node at (1.75,-0.6) {\small $2_L,2_R$};
  \node at (1.35,0.2) {\small $2_R$};
  \node at (1.5,0.6) {\small $2_R$};
  \node at (0.0,-0.9) {\small $1_R$};
  \node at (0.2,-0.5) {\small $1_L$};
  \node at (-0.2,0.0) {\small $1_L$};
  \node at (0.0,0.4) {\small $2_L$};
  \node at (0.7,0.0) {\small $2_L$};
  \draw [ultra thick,green,snake it] (0.25,-0.125) to (-0.5,0.5);
  \draw [ultra thick,red,dashed] (-0.5,0.5) to (-0.5,1.25);
  \draw [ultra thick,blue] (-0.5,0.5) to (-1.25,0.5);
  \draw [ultra thick,blue] (0.25,-0.125) to (1.25,-0.25);
  \draw [ultra thick,green,snake it] (1.25,-0.25) to (1.25,-1.0);
  \draw [ultra thick,red,dashed] (1.25,-0.25) to (1.75,0.25);
  \draw [ultra thick,green,snake it] (1.75,0.25) to (1.75,1.0);
  \draw [ultra thick,green,snake it] (-0.8,-0.625) to (-0.25,-0.625);
  \draw [ultra thick,blue] (-0.25,-1.125) to (-0.25,-0.625);
  \draw [ultra thick,red,dashed] (0.25,-0.125) to (-0.25,-0.625);
  \draw [ultra thick,black,dotted] (-0.5,1.25) to [bend left =60] (1.75,1.0);
  \draw [ultra thick,black,dotted] (-1.25,0.5) to (-1.75,0.5);
  \draw [ultra thick,black,dotted] (1.75,0.25) to (2.25,0.25);
  \draw [ultra thick,green,snake it] (-0.5,1.25) to (-1.0,1.75);
  \draw[thick,draw=black,fill=white]  (0.25,-0.125) circle (3pt);
  \draw[thick,draw=black,fill=black]  (-0.25,-0.625) circle (3pt);
  \draw[thick,draw=black,fill=white]  (-0.8,-0.625) circle (3pt);
  \draw[thick,draw=black,fill=white]  (-0.25,-1.125) circle (3pt);
  \draw[thick,draw=black,fill=black]  (-0.5,0.5) circle (3pt);
  \draw[thick,draw=black,fill=white]  (-1.25,0.5) circle (3pt);
  \draw[thick,draw=black,fill=white]  (-0.5,1.25) circle (3pt);
  \draw[thick,draw=black,fill=black]  (1.25,-0.25) circle (3pt);
  \draw[thick,draw=black,fill=white]  (1.25,-1.0) circle (3pt);
  \draw[thick,draw=black,fill=white]  (1.75,0.25) circle (3pt);
  \draw[thick,draw=black,fill=black]  (1.75,1.0) circle (3pt);
  \end{tikzpicture}
  };
  \end{tikzpicture}
\end{center}
\caption{For the proof of \cref{lem:coloring_success}: how to break apart and reattach faces for the coloring in \cref{construction:coloring}.} \label{fig:coloring_reduction}
\end{figure}

Therefore, we can assume that all white vertices of $\W$ are contained in internal faces.  By \cref{lem:special_vertices}, there is an edge separating an internal face from the outer face, such that one of its vertices is black and adjacent to a boundary vertex, and the other vertex is white and adjacent to a black internal vertex which in turn is adjacent to two boundary vertices. Via \cref{cor:coloring_rotation}, we may assume that these latter two boundary vertices are labeled $1$ and $2$, and the former boundary vertex is labeled $3$.

We construct $\W'$ by removing the edge incident to the white internal vertex and the attached boundary vertices, and then splitting this white vertex into two independent boundary vertices.  By construction, the number of internal faces of $\W'$ is one less than the number of faces of $\W$.  We label its boundary vertices so that the two split vertices have labels $1$ and $2$.  By induction, \cref{construction:coloring} succeeds on~$\W'$.

Using the properties of this coloring guaranteed by \cref{lem:4k+2-path}, we can obtain a coloring of $\W$ which coincides with the coloring produced by \cref{construction:coloring}, as depicted in \cref{fig:coloring_reduction}.
\end{proof}

\Cref{lem:success_means_word,lem:coloring_success} implies \cref{thm:kreweras_webs}.

\section{Future directions} \label{sec:future}

In this section we discuss some potential connections and possible threads of future research.

\subsection{Relation of our work to previous work on webs and promotion} \label{subsec:webs}

Webs were introduced by Kuperberg~\cite{kuperberg1996spiders} to study invariant tensors of representations of simple Lie algebras (and their relatives like quantum groups). In~\cite{khovanov1999web}, Khovanov and Kuperberg described a bijection between the set of linear extensions of~$[3]\times [n]$ (i.e., \dfn{standard Young tableaux} (SYTs) of~$3 \times n$ rectangular shape) and irreducible $\mathfrak{sl}_3$-webs with~$3n$ white boundary vertices. In~\cite{petersen2009promotion} Petersen, Pylyavskyy, and Rhoades showed that, under the bijection of Khovanov--Kuperberg, promotion of linear extensions of~$[3]\times [n]$ corresponds to rotation of webs. This should be seen as directly analogous to the fact, mentioned in \cref{sec:intro}, that promotion of linear extensions of~$[2] \times [n]$ (i.e., promotion of Dyck words) corresponds to rotation of noncrossing matchings: indeed, noncrossing matchings can be seen as ``$\mathfrak{sl}_2$-webs.'' Later, Tymoczko~\cite{tymoczko2012simple} gave a different, simpler description of the Khovanov--Kuperberg bijection and used this description to reprove the results of Petersen--Pylyavskyy--Rhoades as well. Building on Tymoczko's work, Russell~\cite{russell2013explicit} (see also Patrias~\cite{patrias2019promotion}) related rotation of irreducible $\mathfrak{sl}_3$-webs with arbitrarily colored boundary vertices to promotion of $3$-rowed \emph{semi}standard (as opposed to standard) tableaux.

At its core, the proof of our main results boils down to showing that for a Kreweras word $w$, the web $\W_{\pro(w)}$ is the rotation of the web $\W_w$. Hence, our work would seem to be closely related to the aforementioned work relating webs and promotion. And indeed, our procedure of obtaining a web from a Kreweras bump diagram by breaking apart its crossings is very similar to Tymoczko's procedure of converting a so-called ``{\sf m}-diagram'' into a web. However, the exact relation between our work and prior work is not clear to us. Let us emphasize some points of contrast.

In the same way that linear extensions of $[2]\times [n]$ naturally correspond to Dyck words, linear extensions of $[3]\times [n]$ naturally correspond to words of length~$3n$ in the letters~$\A$, $\B$, and~$\C$, with equally many $\A$'s, $\B$'s, and $\C$'s, for which every prefix has at least as many~$\A$'s as~$\B$'s and at least as many~$\B$'s as~$\C$'s (this representation is usually called the \dfn{lattice word} or \dfn{Yamanouchi word} of the tableau). In this way, the linear extensions of $[3]\times [n]$ can be viewed as a subset of the Kreweras words of length $3n$. However, promotion of a linear extension of $[3]\times [n]$ is not the same as promotion of its corresponding Kreweras word. Furthermore, the web obtained from a linear extension of~$[3]\times [n]$ via the Khovanov--Kuperberg/Tymoczko bijection is not the same as the web~$\W_w$ for its corresponding Kreweras word~$w$. Indeed, as we have already seen with~\cref{thm:kreweras_webs}, only a subset of irreducible $\mathfrak{sl}_3$-webs with~$3n$ white boundary vertices arise as~$\W_w$ for some Kreweras word $w$ of length~$3n$. And, on the other hand, unlike the situation with SYTs, we also need extra decoration (the $3$-edge-coloring) to recover $w$ from~$\W_w$.

Another way that our work differs from the work mentioned above is
that for us, the trip permutation $\sigma_w$ associated to the web
$\W_w$ plays a central role, in contrast to previous work on webs and
promotion.  In fact, it seems that viewing a web as a plabic graph in
order to extract a trip permutation is a new idea, although
Lam~\cite{lam2015dimers} and Fraser, Lam and
Le~\cite{fraser2019dimers} discuss some relationships between webs
and plabic graphs.  We also note that one could adapt the argument in
\cref{lem:perm_from_growth_diagram} to show that for a linear
extension of $[3]\times [n]$, the trip permutation of its
Khovanov--Kuperberg/Tymoczko web can similarly be read off from its
growth diagram.

At any rate, it would certainly be interesting to understand more precisely the connection between our work and the previous work on webs and promotion, if there is some precise connection.

\subsection{More enumeration, and connection with planar maps} \label{subsec:maps}

As we just saw in \cref{subsec:webs}, the total number of irreducible $\mathfrak{sl}_3$-webs with~$3n$ white boundary vertices is the same as the number of standard Young tableaux of $3 \times n$ rectangular shape, for which there is a well-known product formula $2 \, \frac{(3n)!}{n!(n+1)!(n+2)!}$ (sometimes these numbers are called ``three-dimensional Catalan numbers''). \Cref{cor:web_count} gives a product formula for a weighted enumeration of Kreweras webs. Let us now explain how one can enumerate Kreweras webs, without this weighting. As we will see, certain famous sequences of numbers counting planar maps naturally arise.

We will employ a small amount of generatingfunctionology for this task. When dealing with combinatorial generating functions it is often useful to reduce to ``connected'' objects. We say a (non-empty) Kreweras word $w$ is \dfn{connected} if it contains no proper consecutive substring which is a Kreweras word. Evidently,
\[\textrm{$w$ is connected} \Leftrightarrow \textrm{$\D_w$ is connected} \Leftrightarrow \textrm{$\W_w$ is connected}.\]

Let us form the generating functions
\begin{align*}
K(x) &\coloneqq \sum_{n=0}^{\infty} K_n \, x^{3n} = 1+2x^3+16x^6+192x^9+2816x^{12}+46592x^{15}+\ldots; \\
K^c(x) &\coloneqq \sum_{n=1}^{\infty} K^c_n \, x^{3n} = 2x^3 + 4x^6 + 24x^9 + 208x^{12} +2176x^{15}+ \ldots,
\end{align*}
where
\begin{align*}
K_n &\coloneqq \# \textrm{ Kreweras words of length $3n$};\\
K^c_n &\coloneqq \# \textrm{ connected Kreweras words of length $3n$}.
\end{align*}
As we have seen in \cref{sec:intro},  $K_n =\frac{4^n}{(n+1)(2n+1)}\binom{3n}{n}$ (\url{http://oeis.org/A006335}). Note that every Kreweras word is obtained, in a unique way, from a connected Kreweras word $w$ by inserting an arbitrary (possibly empty) Kreweras word after each letter of $w$. This yields the generating function equation
\[ K(x) = 1+K^c(xK(x)).\]
From the above equation, and the formula for $K_n$, it is possible to use Lagrange inversion to deduce that $K^c_n = 2^{n+1} \, \frac{(4n-3)!}{(3n-1)!n!}$.

Next, we do the same but with Kreweras webs instead of Kreweras words. That is, we form the generating functions
\begin{align*}
W(x) &\coloneqq \sum_{n=0}^{\infty} W_n \, x^{3n} = 1 + x^3 + 5x^6 + 42x^9 +459x^{12}+ 5871x^{15}+ \ldots; \\
W^c(x) &\coloneqq \sum_{n=1}^{\infty} W^c_n \, x^{3n} = x^3 + 2x^6 + 12x^9 + 104x^{12} + 1088x^{15}+\ldots,
\end{align*}
where
\begin{align*}
W_n &\coloneqq \# \textrm{ Kreweras webs with $3n$ boundary vertices};\\
W^c_n &\coloneqq \# \textrm{ connected Kreweras webs with $3n$ boundary vertices}.
\end{align*}
It follows from \cref{thm:kreweras_webs} that $W^c_n = \frac{1}{2}K^c_n$. Moreover, the same reasoning as in the case of Kreweras words implies the generating function equation
\[ W(x) = 1+W^c(xW(x)).\]
From the above equation, and the formula for $W^c_n$, it is possible to use Lagrange inversion to obtain the coefficients $W_n$, although the answer one obtains is not as nice as for $K_n$.

Finally, let us explain the connection with planar maps. Recall that a \dfn{planar map} is a topological equivalence class of embeddings of a connected planar graph in the sphere. The number of rooted, bridgeless, cubic planar maps with $2n$ vertices is
\[ \frac{2^n}{(n+1)(2n+1)}\binom{3n}{n} = 2^{-n} \cdot K_n \qquad \textrm{(\url{http://oeis.org/A000309})}.\]
Bernardi~\cite{bernardi2007bijective} defined a bijection from Kreweras words of length $3n$ to rooted, bridgeless, cubic planar maps with $2n$ vertices decorated with a \dfn{depth tree}, which is a certain kind of spanning tree. He also explained why every such map has exactly $2^{n}$ depth trees, and thus combinatorially explained the above equality. 

Meanwhile, the number of rooted, $3$-connected, cubic planar maps with $2n$ vertices is
\[ 2 \, \frac{(4n-3)!}{(3n-1)!n!} = 2^{-n} \cdot K^c_n = 2^{-(n-1)} \cdot W^c_n \qquad \textrm{(\url{http://oeis.org/A000260})}.\]
We believe that under Bernardi's bijection, a Kreweras word is connected if and only if its corresponding cubic planar map is $3$-connected. Thus Bernardi's bijection also explains, combinatorially, the above equality. However, it would be desirable to \emph{directly} explain why the enumeration of connected Kreweras webs is related to the enumeration of rooted, $3$-connected, cubic planar maps, without going through Kreweras words. There is some reason to hope this is possible because webs seem at least superficially similar to cubic planar maps.

\subsection{An algebraic model} \label{subsec:algebra}

Is there an algebraic model which explains the behavior of promotion on Kreweras words?

The Henriques-Kamnitzer \dfn{cactus group action}~\cite{henriques2006crystals} on the tensor product of crystals of representations of a (simple, finite-dimensional) Lie algebra gives rise to a notion of promotion acting on the highest weight words of weight zero for such a tensor product: see~\cite{fontaine2014cyclic, westbury2016invariant, pfannerer2020promotion}. For example, letting~$V$ be the vector representation of $\mathfrak{sl}_k$, this cactus group promotion action on the weight zero highest weight words of~$\otimes^{kn} V$ corresponds to promotion of linear extensions of $[k] \times [n]$ (i.e., promotion of SYTs of $k \times n$ rectangular shape). Moreover, there are general results (see the references above) which imply that cactus group promotion of weight zero highest weight words always has good behavior.

As mentioned in the previous subsection, Kuperberg first defined webs in order to study invariant tensors in tensor products of representations of Lie algebras. So it is not so unreasonable to think that promotion of Kreweras words could be connected to invariant tensors and the Henriques-Kamnitzer cactus group action in some way. Perhaps the proper algebraic model for Kreweras words will come from representations of some variant of a simple, finite-dimensional Lie algebra, like a Lie superalgebra or a Kac--Moody algebra. A better understanding of the algebraic significance of the ``no $4k$-sided internal faces'' condition for the Kreweras webs could be the key to uncovering the proper algebraic model for Kreweras words.

We note that the cactus group promotion of highest weight words can be described via \dfn{local rules}: see~\cite[\S 4.2]{pfannerer2020promotion}. As we saw in \cref{sec:evac}, promotion of Kreweras words can also be described via local rules. However, local rules alone are not enough to guarantee good behavior of promotion: again, as we saw in \cref{sec:evac}, promotion of the linear extensions of any poset can be described by local rules, but most posets have bad behavior of promotion.

\subsection{Cyclic sieving}

Diagrammatic and/or algebraic models are often useful for establishing \dfn{cyclic sieving} results. Let us recall this notion from Reiner--Stanton--White~\cite{reiner2004cyclic}:

\begin{definition}
Let $X$ be a finite set. Let $C=\langle c \rangle$ be a cyclic group of order $\ell$ acting on $X$, generated by element $c\in C$. Let~$f(q) \in \mathbb{N}[q]$ be a polynomial in $q$ with nonnegative integer coefficients. Then we say the triple $(X,\Phi,f(q))$ exhibits \dfn{cyclic sieving} if for all $k$ we have
\[ \#\{x\in X\colon c^k(x)=x\} = f(\omega^{k}), \]
where $\omega  \coloneqq  e^{2\pi i/\ell}$ is a primitive $\ell$th root of unity.
\end{definition}

Cyclic sieving phenomena (CSPs) involving polynomials which have an expression as a ratio of products of $q$-numbers are especially valued, because they imply that every symmetry class has a product formula.

In the case of promotion of SYTs of  $k\times n$ rectangular shape, Rhoades~\cite{rhoades2010cyclic} obtained such a CSP. He showed $(\{\textrm{SYTs of shape $k\times n$}\},\langle\pro\rangle,f(q))$ exhibits cyclic sieving, where $f(q)$ is the \dfn{major index generating function} for the SYTs of shape $k \times n$, which has the product formula
\[ f(q) = \frac{q^{n\binom{k}{2}}\prod_{j=1}^{kn}(1-q^{j})}{\prod_{j=1}^{k} (1-q^j)^j \prod_{j=k+1}^{n} (1-q^j)^k \prod_{j=n+1}^{n+k}(1-q^j)^{n+k-j}}, \]
assuming by symmetry that $k\leq n$. Note that this product formula is the well-known \dfn{$q$-hook length formula}~\cite[Cor. 7.21.5]{stanley1999ec2}.

We conjecture the following CSP for promotion of Kreweras words:

\begin{conj} \label{conj:cyc_siev}
For all $n\geq 1$, the rational expression
\[ f(q)  \coloneqq  \frac{\prod_{j=1}^{3n} (1-q^{2j})}{\prod_{j=2}^{2n+1}(1-q^j)\prod_{j=2}^{n+1} (1-q^{2j}) }\]
is a polynomial in $q$ with nonnegative integer coefficients, and the triple
\[(\{\textrm{Kreweras words of length $3n$}\}, \langle\pro\rangle,f(q))\]
exhibits cyclic sieving.
\end{conj}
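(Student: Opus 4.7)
The plan is a two-step attack. First, to verify that $f(q)\in\mathbb{N}[q]$, I would cancel common factors to obtain the simpler form
\[
f(q) = \frac{\prod_{j=n+2}^{3n}(1-q^{2j})}{\prod_{j=3}^{2n+1}(1-q^{j})} = \prod_{j=n+2}^{3n}(1+q^{j}) \cdot g_n(q), \qquad g_n(q) \coloneqq \frac{[2]_q\,[3n]_q!}{[n+1]_q!\,[2n+1]_q!}.
\]
The prefactor $\prod(1+q^j)$ is manifestly in $\mathbb{N}[q]$. The residue $g_n(q)$ specializes at $q=1$ to the integer $\frac{2}{(n+1)(2n+1)}\binom{3n}{n}$. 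A factorization of $g_n(q)$ into $q$-integers of the form $[m]_{q^d}$ is \emph{not} available in general -- already at $n=4$ the cyclotomic decomposition of $g_n(q)$ involves $\Phi_{11}$, which cannot appear as a factor of any $[m]_{q^d}$ with $d\leq n+1$ -- so one must instead hope for a direct combinatorial interpretation. I would look for a $q$-weighted variant of Bernardi's bijection (\cref{subsec:maps}) expressing $g_n(q)$ as a generating function for a suitable statistic on bridgeless cubic planar maps.

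Second, for the cyclic sieving identity itself, I would use the web framework. By \cref{thm:web}, $\pro$ on Kreweras words corresponds to $\rot$ on Kreweras webs, together with an explicit transformation of the $3$-edge-coloring $c_w$. \Cref{thm:kreweras_webs} gives a bijection between Kreweras words $w$ of length $3n$ and pairs $(\W_w,(c_1,\dots,c_{\kappa(\W_w)}))$ consisting of a Kreweras web with an independent $2$-coloring of its connected components. A word $w$ is fixed by $\pro^k$ precisely when $\rot^k(\W_w)=\W_w$ and the induced action on the component colors preserves $(c_1,\dots,c_{\kappa})$. Summing over orbits gives
\[
F_k \coloneqq \#\{w : \pro^k(w)=w\} = \sum_{\W:\,\rot^k(\W)=\W} 2^{\,\mathrm{orb}_k(\W)},
\]
where $\mathrm{orb}_k(\W)$ denotes the number of $\langle\rot^k\rangle$-orbits on connected components of $\W$. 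This refines $K_n=\sum_\W 2^{\kappa(\W)}$ from \cref{thm:weighted_kreweras_web_count}.

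The main obstacle is the last step: showing $F_k = f(\omega^k)$ for $\omega = e^{2\pi i/6n}$. I see two plausible routes. \textbf{(a)} \emph{An algebraic model:} construct a graded $\mathbb{C}[\langle\pro\rangle]$-module $V=\bigoplus_d V_d$ with $\dim V = K_n$, Hilbert series $\sum_d (\dim V_d)\,q^d = f(q)$, and $\mathrm{trace}(\pro^k\curvearrowright V)=F_k$, in the spirit of Rhoades's proof of the rectangular SYT CSP via Kazhdan--Lusztig cellular representations. This dovetails with the program of \cref{subsec:algebra}; the ``no $4k$-sided internal faces'' condition defining Kreweras webs should dictate the correct algebraic setting (candidates include a Lie superalgebra, a Kac--Moody quotient, or a modified $\mathfrak{sl}_3$-web category). \textbf{(b)} \emph{A bijective route through planar maps:} combine Bernardi's bijection with a $q$-enumeration of cyclically symmetric bridgeless cubic planar maps to reduce the CSP to a known or easier one in the map setting, with $\prod(1+q^j)$ absorbing a $2^{\kappa(\W)}$-style decoration factor and $g_n(q)$ controlling the ``underlying'' map. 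Either approach requires substantive new input, which is what keeps the conjecture open; I would pursue (a) first, since once the module is in hand the CSP follows essentially automatically by a character computation.
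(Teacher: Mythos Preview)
The paper does not prove this statement: it is explicitly stated as a \emph{conjecture} (\cref{conj:cyc_siev}) and left open. Indeed, the paper remarks just after the conjecture that it ``strongly suggests that some good algebraic model for Kreweras word promotion should exist, although we do not know of the precise algebraic or combinatorial significance of the polynomial $f(q)$ appearing in the conjecture.'' So there is no proof in the paper to compare your attempt against.

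Your write-up is appropriately framed as a research plan rather than a proof, and you yourself acknowledge that ``either approach requires substantive new input, which is what keeps the conjecture open.'' That honesty is correct. A few comments on the plan itself: the generating-function simplification and the reduction via \cref{thm:web} and \cref{thm:kreweras_webs} to counting $\rot$-fixed Kreweras webs weighted by $2^{\mathrm{orb}_k(\W)}$ are sound and are exactly the kind of structural leverage the paper's results provide. Route (a) is what the paper's \cref{subsec:algebra} is pointing toward; route (b) via Bernardi's bijection is plausible but be aware that, as the paper notes in \cref{subsec:maps}, even the \emph{direct} (non-equivariant) relationship between connected Kreweras webs and $3$-connected cubic maps is not yet combinatorially understood, so an equivariant refinement is likely harder still. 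In short: your outline is reasonable, but it is an outline, not a proof, and the paper does not claim one either.
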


\Cref{conj:cyc_siev} strongly suggests that some good algebraic model for Kreweras word promotion should exist, although we do not know of the precise algebraic or combinatorial significance of the polynomial $f(q)$ appearing in the conjecture.

We also conjecture similarly that there is a product formula for the number of Kreweras words fixed by $\evac$ and $\evac^{*}$:

\begin{conj} \label{conj:evac}
For all $n\geq 1$, the number of Kreweras words of length $3n$ with $\evac^{*}(w)=w$ is
\[ \frac{3^{\lfloor n/2 \rfloor} 4^{\lceil n/2 \rceil} \prod_{j=1}^{\lfloor n/2 \rfloor}(3j-1) \prod_{j=1}^{\lceil n/2 \rceil}(3j-2) }{(n+1)!}.\]
The number of Kreweras words of length $3n$ with $\evac(w)=w$ is this same number if~$n$ is even, and is $0$ if $n$ is odd.
\end{conj}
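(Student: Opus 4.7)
The plan is to reduce the count of $\evac$-fixed Kreweras words to that of $\evac^*$-fixed words, then exploit the centrosymmetry this forces on the trip permutation $\sigma_w$ and the web $\W_w$.

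For the $\evac$-count, note first that if $n$ is odd then $3n$ is odd, and \cref{lem:perm_evac}(b) says $\evac(w)=w$ forces $\varepsilon_w(i) = -\varepsilon_w(3n+1-i)$ for all $i$; at the middle index $i = (3n+1)/2$ this demands $\varepsilon_w(i) = -\varepsilon_w(i)$, impossible since $\varepsilon_w$ takes values in $\{\B, \C\}$. So the $\evac$-count is zero for $n$ odd. For $n$ even, using $\pro\circ\evac = \evac\circ\pro^{-1}$ and $\pro^{3n} = \evac^*\circ\evac$ from \cref{prop:pro_evac_basics}, one checks that $\pro^{3n/2}\circ\evac\circ\pro^{-3n/2} = \evac\circ\pro^{-3n} = \evac^*$, so $w\mapsto\pro^{3n/2}(w)$ is a bijection between the $\evac$- and $\evac^*$-fixed sets. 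This reduces the whole conjecture to proving the product formula for $|\{w : \evac^*(w) = w\}|$.

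To characterize $\evac^*$-fixed words, I would use the description from the end of \cref{sec:evac}: $\evac^*(w) = w$ iff $\sigma_w = \rc(\sigma_w^{-1})$ and $\varepsilon_w$ is palindromic. The first condition says the permutation matrix of $\sigma_w$ is invariant under $180^\circ$ rotation, and via \cref{thm:web} it is equivalent to $\W_w = \flip(\W_w)$. Thus the $\evac^*$-fixed set is in bijection with pairs (flip-symmetric Kreweras web, compatible palindromic $3$-edge-coloring). The Fuss-Catalan shape of the conjectured count, namely $\frac{4^m(3m)!}{m!(2m+1)!}$ for $n=2m$ and $\frac{4^{m+1}(3m+1)!}{m!(2m+2)!}$ for $n=2m+1$ (a ternary-tree count times a power of $4$), strongly suggests such a pair is equivalent to a smaller structure: a Kreweras-walk-like object in a half-cone of length roughly $\lfloor 3n/2 \rfloor$, with $\lceil n/2 \rceil$ independent four-state decorations recording the axis data. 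One would then extract the formula via Lagrange inversion or the kernel method of Bousquet-M\'{e}lou.

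The hard part is making this half-decomposition bijection precise. The axis of symmetry of a flip-symmetric web can pass through many edge configurations, and the ``no internal face of $4k$ sides'' constraint couples the two halves nonlocally through the $3$-edge-coloring; understanding what the axis data must look like for the coloring to remain valid (and palindromic) is the main technical obstacle. A potentially slicker alternative is to establish a dihedral cyclic-sieving phenomenon extending \cref{conj:cyc_siev}: if the polynomial $f(q)$ there sieves not merely $\langle\pro\rangle$ but the full dihedral $\langle\pro,\evac\rangle$-action on Kreweras words, then the $\evac^*$-fixed count would emerge as a root-of-unity evaluation of $f$, and verifying the product formula would reduce to a $q$-polynomial identity. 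This route, though elegant, presupposes first proving \cref{conj:cyc_siev} itself, which is open.
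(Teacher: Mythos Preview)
The statement you are attempting to prove is \cref{conj:evac}, which the paper explicitly labels a \emph{conjecture} and does not prove; it appears in the ``Future directions'' section with no argument beyond the remark that it might admit a $q=-1$ interpretation. So there is no paper proof to compare against.

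That said, the first half of your proposal is a genuine (if modest) advance over what the paper states. Your argument that no Kreweras word of odd length~$3n$ can satisfy $\evac(w)=w$, via the forced contradiction $\varepsilon_w(i)=-\varepsilon_w(i)$ at the middle index from \cref{lem:perm_evac}(b), is correct. Likewise, your conjugation argument for $n$ even is correct: from $\evac\circ\pro^{-k}=\pro^{k}\circ\evac$ and $\pro^{-3n}=\evac\circ\evac^{*}$ one gets $\pro^{3n/2}\circ\evac\circ\pro^{-3n/2}=\evac\circ\pro^{-3n}=\evac^{*}$, so $\pro^{3n/2}$ is a bijection between the $\evac$-fixed and $\evac^{*}$-fixed sets. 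This cleanly reduces the second sentence of the conjecture to the first, which the paper does not do.

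The remainder of your proposal, however, is not a proof but a research outline, and you say as much yourself. The characterization of $\evac^{*}$-fixed words via flip-symmetric Kreweras webs with palindromic $\varepsilon$-data is correct, and your simplification of the conjectured count to $4^{m}\binom{3m}{m}/(2m+1)$ for $n=2m$ (a Kreweras-number shape in half the size) is suggestive. But the ``half-decomposition bijection'' you propose is exactly the missing idea: you have not specified what combinatorial object a flip-symmetric Kreweras web with palindromic coloring corresponds to, nor why such objects should be counted by that formula. The alternative route via a dihedral refinement of \cref{conj:cyc_siev} presupposes an open conjecture. So the product formula for $|\{w:\evac^{*}(w)=w\}|$ remains unproved in both the paper and your proposal.
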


It is possible that \cref{conj:evac} could be phrased as a ``$q=-1$'' result for a polynomial which has a product formula as a rational expression, although we do not have a candidate for such a polynomial. Note that every poset has a ``$q=-1$'' result for counting self-evacuating linear extensions, where the polynomial is essentially the major index generating function: see~\cite[\S 3]{stanley2009promotion}.

\subsection{Order polynomial product formulas}

It is reasonable to ask ``where is this Kreweras word promotion really coming from?,'' or in other words, ``what is it about the poset $V(n)$ that would lead one to suspect that it has good promotion behavior?'' Let us attempt to answer this question.

Let $P$ be a poset. A \dfn{$P$-partition of height $m$} is a weakly order-preserving map $\pi\colon P \to \{0,1,\ldots,m\}$. It is well-known that the number of $P$-partitions of height~$m$ is given by a polynomial $\Omega_P(m)$ in $m$, called the \dfn{order polynomial} of $P$, whose degree is~$\#P$ and whose leading coefficient is $1/\#P!$ times the number of linear extensions of~$P$ (see, e.g.,~\cite[\S 3.12]{stanley2012ec1}).

In~\cite{kreweras1981solution}, Kreweras--Niederhausen obtained the following product formula for the entire order polynomial of the poset $V(n)$:
\[\Omega_{V(n)}(m) = \frac{\prod_{i=1}^{n}(m+1+i) \prod_{i=1}^{2n}(2m+i+1)}{(n+1)!(2n+1)!}.\]
They deduced the product formula counting Kreweras words (i.e., linear extensions of~$V(n)$) as a corollary.

In~\cite{hopkins2020order}, the first author (S.H.) presented the following heuristic: ``posets with good dynamical behavior = posets with order polynomial product formulas.'' Here ``good dynamical behavior'' includes good behavior of promotion of linear extensions. It is via this heuristic that promotion for Kreweras words was discovered:~S.H. asked a question on MathOverflow about posets with order polynomial product formulas, and was lead to the paper~\cite{kreweras1981solution} and the $V(n)$ poset by an answer of Ira Gessel~\cite{gessel2019MO}.

\subsection{Rowmotion}

\dfn{Rowmotion} is a certain invertible action on the order ideals of any poset $P$ which has been studied by many authors over a number of years, with a renewed interest especially in the last 10 or so years. Rowmotion and promotion are ``similar'' in many respects. For an overview and history of rowmotion see, e.g.,~\cite{striker2012promotion} or~\cite{thomas2019rowmotion}. Einstein and Propp~\cite{einstein2013combinatorial} introduced a piecewise-linear extension of rowmotion, which in particular yields a (piecewise-linear) action of rowmotion on the set of $P$-partitions of height~$m$.

In the ``posets with good dynamical behavior = posets with order polynomial product formulas'' heuristic just mentioned, ``good dynamical behavior'' also includes good behavior of rowmotion of order ideals, and more generally good behavior of (piecewise-linear) rowmotion of $P$-partitions.

In agreement with this heuristic, it (experimentally) appears that the poset $V(n)$ has good behavior of rowmotion of order ideals and $P$-partitions. Conjectures concerning rowmotion for $V(n)$ appeared in the aforementioned paper~\cite{hopkins2020order}.

\bibliography{kreweras}{}
\bibliographystyle{plain}

\end{document}